\newtheorem{thm}{Theorem}[section]
\newtheorem{cor}{Corollary}
\newtheorem{lemma}[thm]{Lemma}
\newtheorem{prop}{Proposition}
\theoremstyle{definition}
\newtheorem{rmk}{Remark}
\newcommand {\R} {\mathbb{R}} 
 \newcommand {\N} {\mathbb{N}}
\newcommand {\p} {\partial}
\DeclareMathOperator{\di}{div}
\DeclareMathOperator {\dist} {dist}
\DeclareMathOperator {\Ree} {Re}
\DeclareMathOperator{\inte} {int}
\begin{document}

\title[Epiperimetric inequality approach to Parabolic Signorini problem]{An epiperimetric inequality approach to the parabolic Signorini problem}

\author{Wenhui Shi}

\address{9 Rainforest Walk, Level 4, Monash University, VIC 3800, Australia}
\email{wenhui.shi@monash.edu}

\begin{abstract}
In this note, we use an epiperimetric inequality approach to study the regularity of the free boundary for the parabolic Signorini problem. We show that if the "vanishing order" of a solution at a free boundary point is close to $3/2$ or an even integer, then the solution is asymptotically homogeneous. Furthermore, one can derive a convergence rate estimate towards the asymptotic homogeneous solution. As a consequence, we obtain the regularity of the regular free boundary as well as the frequency gap.
\end{abstract}

\subjclass[2010]{Primary 35R35}

\keywords{parabolic Signorini problem, epiperimetric inequality, free boundary regularity, singular set}

\maketitle

\tableofcontents

\section{Introduction}
In this note, we develop a new approach to study the asymptotics of solutions to the parabolic Signorini problem at the free boundary points. More precisely, let $u$ be a solution to
\begin{equation}\label{eq:parabolic}
\begin{split}
\p_t u-\Delta u=f &\text{ in } S_2^+\\
u\geq 0, \ \p_n u\leq 0, \ u\p_n u=0 &\text{ on } S'_2 .
\end{split}
\end{equation}
Here for $R>0$ and space dimension $n\geq 2$
\begin{equation*}
\begin{split}
S_R^+&:=\{(x,t): x\in \R^n_+, \ t\in [-R,0]\},\quad \R^n_+:=\R_n\cap \{x_n>0\}\\
S'_R &:=\{(x,t): x\in \R^n\cap \{x_n=0\}, \ t\in [-R,0]\}
\end{split}
\end{equation*}
and $f\in L^\infty(S_2^+)$ is a given inhomogeneity. Throughout the paper we will assume that
\begin{itemize}
\item [(A)] $u$ is normalized such that
\begin{align*}
\int_{S_2^+} u(x,t)^2 G(x,t)dxdt=1,
\end{align*}
where
\begin{align*}
G(x,t):=\left\{
          \begin{array}{ll}
            (-4\pi t)^{-\frac{n}{2}}e^{\frac{|x|^2}{4t}}, & \ t<0, \\
            0, & \ t\geq 0
          \end{array}
        \right.
\end{align*}
denotes the backward heat kernel in $\R^n\times \R$.
\item[(B)] $u$ satisfies the Sobolev regularity in the Gaussian space: there exists $C$ depending on $n$ and $\|f\|_{L^\infty(S_2^+)}$ such that
\begin{align*}
&\int_{S_1^+}(-t)^2\left(|D^2u(x,t)|^2+|\p_tu(x,t)|^2\right)G(x,t)dxdt\\
&+\sup_{t\in [-1,0]} \int_{\R^n_+} (|\nabla u(x,t)|^2 +|u(x,t)|^2) G(x,t) dx \leq C.
\end{align*}
\item [(C)] $u$ satisfies the interior H\"older estimate: there exists $\alpha\in (0,1)$  such that for any $U\Subset \R^n_+\cup (\R^{n-1}\times\{0\})$,
\begin{equation*}
\|\nabla u\|_{C^{\alpha,\alpha/2}(U\times [-1,0])}\leq C
\end{equation*}
for some $C>0$ depending on $n$, $\alpha$,  $\|f\|_{L^\infty(S_2^+)}$ and $U$. Here $C^{\alpha,\alpha/2}$ is the parabolic H\"older class.
\end{itemize}

Solutions to \eqref{eq:parabolic} can come from solving a variational inequality for the initial value problem in the class of functions with mild growth at infinity and satisfy $u\in L^2([-2,0]; W^{1,2}_{loc}(\R^n_+))$, $\p_t u\in L^2([-2,0];L^2_{loc}(\R^n_+))$, or they can come from solutions to the Signorini problem in a bounded domain (one applies suitable cut-offs to extend them into full space solutions). In both cases, the Sobolev estimate in (B) and the interior H\"older estimate in (C) hold true, cf. \cite{AU, DGPT}. The normalization assumption (A) is put simply to make the constants in (B) and (C) independent of $u$. Under our regularity assumption, the Signorini boundary condition in \eqref{eq:parabolic} holds in the classical sense.

We denote the contact set
\begin{equation*}
\Lambda_u:=\{(x,t)\in S'_2: u(x,t)=0\}
\end{equation*}
and the free boundary
\begin{equation*}
\Gamma_u:=\p_{S'_2}\Lambda_u.
\end{equation*}

\subsection{Main results} The behavior of a solution around a free boundary point depends very much on how fast it vanishes towards it. The main results of the paper are the following: we prove that if the "vanishing order" of a solution at a free boundary point is close to $3/2$, which is the expected lowest vanishing order, or $2m$, $m\in \N_+$, which are the eigenvalues of the Ornstein-Uhlenbeck operator $-\frac{1}{2}\Delta-x\cdot \nabla$ in $\R^n_+$ with the vanishing Neumann boundary condition, then the solution is asymptotically a homogeneous solution.  As a consequence, we derive the frequency gap around the free boundary points with vanishing order $3/2$ and $2m$; the openness of the regular free boundary (consisting the free boundary points with $3/2$ vanishing order), as well as its space-time regularity.

More precisely, assume $(0,0)\in \Gamma_u$, and for $r\in (0,1)$, let
$$H_u(r):=\frac{1}{r^2}\int\limits_{S_{r^2}^+}u(x,t)^2 G(x,t)dxdt$$
be the weighted $L^2$ space-time average at $(0,0)$. The first theorem is about the asymptotics of the solution around the free boundary point where the vanishing order is around $3/2$:

\begin{thm}\label{thm:32}
Let $u$ be a solution to \eqref{eq:parabolic} and satisfy the assumptions (A)-(C). Assume that $(0,0)\in \Gamma_u$. Then there exists a constant $\gamma_0\in (0,1)$ depending on $n$ and $\|f\|_{L^\infty}$ such that if
\begin{equation}\label{eq:vanishing_32}
c_u r^{3+\gamma_0}\leq H_u(r)\leq C_u r^{3-\gamma_0}, \text{ for all } r\in (0,r_u)
\end{equation}
for some $r_u, c_u, C_u>0$, then there exists a unique function $u_0(x)=c_0 \Ree(x'\cdot e_0+ i |x_n|)^{3/2}$ with $c_0>0$ and $e_0\in \mathbb{S}^{n-1}\cap \{x_n=0\}$ such that
\begin{equation*}
\int\limits_{\R^n_+} |u(x,t)-u_0(x)|^2 G(x,t)dx \leq C (\sqrt{-t})^{3 + 2\gamma_0}
\end{equation*}
for all $t\in [-1,0)$. Here $C>0$ depends on $n$ and $\|f\|_{L^\infty(S_2^+)}$.
\end{thm}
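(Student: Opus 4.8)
The plan is to run the ``Weiss-energy monotonicity plus epiperimetric inequality'' scheme in the Gaussian parabolic setting, following the strategy familiar from the elliptic Signorini/thin-obstacle problem. Introduce the parabolic rescalings $u_r(x,t):=u(rx,r^2t)/\sqrt{H_u(r)}$; a change of variables gives $\int_{S_1^+}u_r^2\,G\,dx\,dt=1$, and assumption (B) makes $\{u_r\}_{0<r<1}$ bounded in the natural weighted Sobolev space. Hypothesis \eqref{eq:vanishing_32} keeps the Almgren--Poon frequency $N_u(r)$ of $u$ at $(0,0)$ within $O(\gamma_0)$ of $3/2$; combined with the known bound $N_u(0+)\ge 3/2$ and the fact that $3/2$ is isolated from the right among admissible frequencies at free boundary points, it forces $N_u(0+)=3/2$. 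Hence every blow-up of $u$ at $(0,0)$ is a $3/2$-homogeneous global Signorini solution, non-trivial because the $u_r$ are $L^2(S_1^+,G)$-normalized, and by the classification it lies in $\mathcal M:=\{c_0\,\Ree(x'\cdot e_0+i|x_n|)^{3/2}:c_0>0,\ e_0\in\mathbb S^{n-1}\cap\{x_n=0\}\}$ (these solutions are $t$-independent). Let $W(r):=W_{3/2}(u,r)$ be the Weiss-type energy normalized so that it vanishes precisely on $\mathcal M$; the parabolic Weiss monotonicity formula makes $W(r)+C\|f\|_{L^\infty}^2 r^{\beta_0}$ nondecreasing for a suitable $\beta_0=\beta_0(\alpha)>0$, and $W(r)\ge 0$ with $W(0+)=0$.

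The engine is the epiperimetric inequality: there exist $\theta\in(0,1)$ and $\delta>0$ so that whenever the lateral/terminal data of a competitor lies within $\delta$ of $\mathcal M$ in the relevant Gaussian norm, one can construct an admissible competitor $w$ (respecting the Signorini sign constraint) with the same data and $W(w)\le(1-\theta)\,M$, where $M$ is the boundary energy of the data. So the first genuine step is a compactness argument: boundedness of $\{u_r\}$ together with the Weiss monotonicity, the fact that $W_{3/2}(v,1)=0$ exactly for $3/2$-homogeneous solutions $v$, and the classification, show that the data of $u_r$ is within $\delta$ of $\mathcal M$ once $r<\bar r$. Feeding $u_r$ into the epiperimetric inequality and comparing $u$ with the resulting competitor (using that $u$ minimizes its energy up to the contribution of $f$) yields, for $r<\bar r$,
\begin{equation*}
W(u,r)\ \le\ (1-\theta)\,M(u_r)\ +\ C\|f\|_{L^\infty(S_2^+)}^2\,r^{\beta_0}.
\end{equation*}

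Next I would derive the differential inequality. The monotonicity identity expresses $\tfrac{d}{dr}W(u,r)$ through $M(u_r)-W(u,r)$ and, in the Gaussian framework, through $\int (Zu_r)^2\,G$ with $Z=x\cdot\nabla+2t\,\partial_t$ the parabolic dilation field; combined with the last display this gives
\begin{equation*}
\frac{d}{dr}W(u,r)\ \ge\ \frac{c_n\theta}{1-\theta}\,\frac{W(u,r)}{r}\ -\ C\|f\|_{L^\infty}^2\,r^{\beta_0-1},\qquad r<\bar r.
\end{equation*}
Choosing $\gamma_0$ small enough that $2\gamma_0<\min\{\beta_0,\,c_n\theta/(1-\theta)\}$, and integrating this (using $W\ge 0$, $W(0+)=0$, and absorbing the error term) produces the decay $W(u,r)\le C\,r^{2\gamma_0}$ for $r<\bar r$, hence for all $r\in(0,1)$ by (B)--(C).

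Finally I would upgrade the energy decay to uniqueness and the stated rate. The same monotonicity identity bounds $\|\partial_r u_r\|_{L^2(S_1^+,G)}$ by $C\sqrt{W'(r)/r}$ up to an $f$-error, so a Cauchy-sequence estimate --- $\|u_{r_1}-u_{r_2}\|_{L^2(S_1^+,G)}\lesssim\int_{r_1}^{r_2}\sqrt{W'(\rho)/\rho}\,d\rho+(f\text{-error})\lesssim r_2^{\gamma_0}$, the integral handled by Cauchy--Schwarz against $W(\rho)\le C\rho^{2\gamma_0}$ --- shows $u_r\to u_0$ in $L^2(S_1^+,G)$ as $r\to0$, with $\|u_r-u_0\|_{L^2(S_1^+,G)}\lesssim r^{\gamma_0}$; since the rate is summable, $u_0$ is independent of the sequence, so it is unique. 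The limit $u_0$ is a non-trivial $3/2$-homogeneous Signorini solution, hence $u_0=c_0\,\Ree(x'\cdot e_0+i|x_n|)^{3/2}$ with $c_0>0$ by the classification. Undoing the rescaling --- using $\sqrt{-t}\sim r$, $H_u(r)/r^3\to$ a positive constant (a consequence of $u_r\to u_0$ and $W\le Cr^{2\gamma_0}$), and absorbing that constant into $c_0$ --- converts $\|u_r-u_0\|_{L^2(S_1^+,G)}\lesssim r^{\gamma_0}$ into $\int_{\R^n_+}|u(\cdot,t)-u_0|^2\,G(\cdot,t)\,dx\le C(\sqrt{-t})^{3+2\gamma_0}$ for $t\in[-1,0)$, as claimed. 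The main obstacle is the epiperimetric inequality itself in the parabolic Gaussian setting --- the explicit competitor construction, handling the time variable, and keeping every error from $f\in L^\infty$ at an admissible power of $r$ --- together with the compactness step that brings $u_r$ into its range of validity; once these are in hand, the differential-inequality and Cauchy-sequence steps above are routine.
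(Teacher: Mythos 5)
Your proposal sketches the elliptic-style epiperimetric route but leaves its central engine unproved, and that engine is precisely where all the difficulty of this theorem lives; moreover it invokes inputs the paper explicitly avoids. Concretely: (1) The competitor-based epiperimetric inequality in the parabolic Gaussian setting, together with the comparison step ``using that $u$ minimizes its energy up to the contribution of $f$,'' is deferred and flagged as ``the main obstacle.'' But for the parabolic Signorini problem $u$ solves a variational inequality (a gradient flow), not a static minimization, so the almost-minimality you rely on is not available; and a competitor-based parabolic epiperimetric inequality in the sense you describe has not been established and the paper does not prove one. Instead the paper passes to conformal self-similar coordinates $(y,\tau)=(x/(2\sqrt{-t}),-\ln(-t))$ (Lemma \ref{lem:coordinates}) and proves a \emph{discrete} contraction $W(\tilde u(\tau+1))\leq(1-c_0)W(\tilde u(\tau))$ (Proposition \ref{prop:epi}) by a compactness/contradiction argument along the actual solution flow, using the dissipation identity \eqref{eq:Weiss_new2} and the projection of $\tilde u$ onto $\mathcal{E}_{3/2}$. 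No competitor is constructed and no minimality is assumed; the paper highlights exactly this as its departure from prior epiperimetric approaches. (2) You derive $N_u(0+)=3/2$ from the known bound $N_u(0+)\geq 3/2$ and the isolation of $3/2$, and then classify blow-ups. The paper deliberately avoids the frequency function: it states that \eqref{eq:vanishing_32} is a strictly weaker hypothesis precisely because it does not presuppose the existence of $N_u(0+)$. In the paper's proof, $W_{3/2}(\tilde u(\tau))\geq 0$ is extracted directly from the upper bound in \eqref{eq:vanishing_32} via Corollary \ref{cor:decay_rate_32}(ii) (a negative Weiss energy forces exponential $L^2$ growth, contradicting the upper bound), and the nondegeneracy of the limit $u_0\neq 0$ comes from the lower bound in \eqref{eq:vanishing_32}, not from a frequency or normalization argument. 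So even if one wanted to pursue your route, the appeal to the frequency machinery would make the theorem weaker than what the paper proves.

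The later steps you outline (differential inequality for $W$, Cauchy-sequence estimate for $u_r$, undoing the rescaling) are fine modulo the decay of the Weiss energy, but they are essentially the same bookkeeping the paper does after Corollary \ref{cor:decay_rate_32}, translated back to radial coordinates. The gap is the source of the decay: the paper gets it from a dynamical contraction proved by compactness in conformal time, while your outline posits a competitor/minimality mechanism that is neither constructed nor known to hold here.
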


Theorem \ref{thm:32} still holds true, if we assume instead of the lower bound in \eqref{eq:vanishing_32} that the solution at $t=-1$ is sufficiently close to the asymptotic profile, cf. Remark \ref{rmk:non_zero2}. We remark that \eqref{eq:vanishing_32} is satisfied if $(0,0)$ is a free boundary point with frequency $\kappa=3/2$, where the frequency is defined as the limit of the Almgren-Poon frequency function at $(0,0)$, cf. \cite{DGPT} for the precise definition. However, our assumption \eqref{eq:vanishing_32} is much weaker, since it does not rely on the existence of the limit of the frequency function or the optimal spacial regularity. On the other hand, if $\alpha\geq \frac{1-\gamma_0}{2}$ in the assumption (C), then we can instead derive the optimal interior regularity $\nabla u\in C^{1/2,1/4}$ from Theorem \ref{thm:32}.

The next theorem is about the logarithmic convergence towards the asymptotic homogeneous solution around the free boundary point with the vanishing order close to $2m$. For that, we let
$\mathcal{E}_{2m}$ be the $2m$-eigenspace of the Ornstein-Uhlenbeck operator $-\frac{1}{2}\Delta + x\cdot \nabla $ on $\R^n_+$ with the vanishing Neumann boundary condition on $\{x_n=0\}$.  Let $\mathcal{E}_{2m}^+$ be the convex cone in $\mathcal{E}_{2m}$ where the restriction of $p$ on $\{x_n=0\}$ is nonnegative. We remark that given $\bar p\in \mathcal{E}_{2m}^+$, the function $p(x,t)=(\sqrt{-t})^{2m}\bar p(\frac{x}{2\sqrt{-t}})$, $t\in (-\infty,0)$, is a $2m$-parabolic homogeneous solution to \eqref{eq:parabolic}.

\begin{thm}\label{thm:2m}
Let $u$ be a solution to \eqref{eq:parabolic} and satisfy the assumptions (A)-(C) with $f=0$. Let $m\in \N_+$ be an arbitrary positive integer. Assume that $(0,0)\in \Gamma_u$. Then there exist small constants $\gamma_m, \delta_0\in (0,1)$ depending only on $m, n$, such that if
\begin{equation*}
H_u(r)\leq C_u r^{2\kappa-\gamma_m}, \quad \kappa=2m,
\end{equation*}
for each $r\in (0,r_u)$ for some $r_u,C_u>0$, and at $t=-1$
\begin{equation*}
\dist_{W^{1,2}_{\tilde{\mu}}(\R^n_+)}(u(\cdot,-1), \mathcal{E}_{2m}^+)^2\leq \delta_0\|u(\cdot,-1)\|_{L^2_{\tilde{\mu}}(\R^n_+)}^2, \quad d\tilde{\mu}(x):=G(x,-1)dx=c_ne^{-\frac{|x|^2}{4}}dx,
\end{equation*}
then there exists a unique nonzero $p_0(x,t)=(\sqrt{-t})^{2m}\bar p_0(\frac{x}{2\sqrt{-t}})$ with $\bar p_0\in \mathcal{E}_{2m}^+$ such that
\begin{equation*}
\begin{split}
\int\limits_{\R^n_+}|u(x,t)- p_0(x,t)|^2 G(x,t) dx&\rightarrow 0\text{ as } t\rightarrow 0,\\ 
\int_{\R^n_+}\left||u(x,t)|^2-|p_0(x,t)|^2\right|G(x,t) dx&\leq \frac{C(\sqrt{-t})^{2m}}{|\ln\ln(-t)|} \text{ for all }t\in [-1,0).
\end{split}
\end{equation*}
\end{thm}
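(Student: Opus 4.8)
The plan is to run an epiperimetric scheme anchored at the Ornstein--Uhlenbeck eigenvalue $\kappa=2m$, with a degenerate epiperimetric inequality at this frequency as the new ingredient. I would pass to the parabolic rescalings $u_r(x,t):=r^{-2m}u(rx,r^2t)$ and to the logarithmic time $\tau=-\log r$; then the normalized trajectory $\tau\mapsto w(\cdot,\tau)$ solves a gradient-flow-type equation for a Gaussian Dirichlet energy on the convex cone $\{v\ge 0\text{ on }\{x_n=0\}\}$, the Signorini condition producing a favourably signed boundary term. Let $W_{2m}(r)$ denote the associated Weiss-type energy, normalized so that its zeros are exactly the $2m$-homogeneous solutions $(\sqrt{-t})^{2m}\bar p(\tfrac{x}{2\sqrt{-t}})$, $\bar p\in\mathcal{E}_{2m}$, and so that $W_{2m}(r)\to 0$ as $r\to 0$ precisely at free boundary points of parabolic vanishing order $2m$. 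The first step is to record the monotonicity of $W_{2m}$ together with the differential identity $-\tfrac{d}{d\tau}W_{2m}=c\,\|\partial_\tau w(\cdot,\tau)\|_{L^2_{\tilde\mu}}^{2}$; this is where $f=0$ is used, since an $L^\infty$ inhomogeneity is of order at most $2\le 2m$ and would destroy the monotonicity at this frequency.

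The main difficulty will be the epiperimetric inequality at $\kappa=2m$: for $w$ with $W_{2m}(w)\ge 0$ small and $w(\cdot,-1)$ close to $\mathcal{E}_{2m}^+$, I must build a $2m$-homogeneous competitor, obtained by extending a suitable perturbation of $w(\cdot,-1)$, whose Weiss energy is strictly smaller. Writing $w=p+\varphi$ with $p\in\mathcal{E}_{2m}^+$ the nearest point in $L^2_{\tilde\mu}$ and expanding $\varphi$ in Ornstein--Uhlenbeck eigenmodes, the modes with eigenvalue $\ne 2m$ give the usual quadratic gain, while the component of $\varphi$ along the finite-dimensional polynomial eigenspace $\mathcal{E}_{2m}$ is controlled only through the unilateral constraint on $\{x_n=0\}$; on this finite-dimensional piece the reduced energy is real analytic, so a {\L}ojasiewicz inequality for the constrained functional applies, but with the borderline exponent. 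I expect this to yield only a quadratic gain with a logarithmic enhancement,
\begin{equation*}
W_{2m}\big(\text{$2m$-homogeneous extension of }w(\cdot,-1)\big)\ \le\ W_{2m}(w)-\varepsilon\,W_{2m}(w)^{2}\Big(\log\tfrac{1}{W_{2m}(w)}\Big)^{a},
\end{equation*}
for suitable $a>2$ and $\varepsilon>0$. Making this quantitative --- simultaneously handling the polynomial kernel $\mathcal{E}_{2m}$, the Signorini constraint, and the extra time derivative in the Gaussian framework --- is the hard part, and it is this degeneracy that forces a double-logarithmic rate rather than a polynomial one.

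Finally I would pass from the inequality to the asymptotics. A continuity and compactness argument on the rescaled flow should show that the closeness hypothesis at $\tau=0$ propagates to all $\tau\ge 0$, so $w(\cdot,\tau)$ never leaves the epiperimetric regime; here $H_u(r)\le C_u r^{4m-\gamma_m}$ is used to exclude parabolic vanishing order below $2m$, pinning it at $2m$ and ensuring $W_{2m}(r)\ge 0$ and $W_{2m}(r)\to 0$. Combining the epiperimetric inequality with the Weiss identity gives the closed differential inequality $-\tfrac{d}{d\tau}W_{2m}\ge c\,W_{2m}^{2}\big(\log\tfrac{1}{W_{2m}}\big)^{a}$, hence the decay $W_{2m}(r)\lesssim|\log r|^{-1}\big(\log|\log r|\big)^{-a}$, and, dividing the Weiss identity by $\|\partial_\tau w\|_{L^2_{\tilde\mu}}$, the {\L}ojasiewicz--Simon summability bound $\int_\tau^\infty\|\partial_s w(\cdot,s)\|_{L^2_{\tilde\mu}}\,ds\lesssim\big(\log|\log r|\big)^{-(a/2-1)}$; this makes $\tau\mapsto w(\cdot,\tau)$ Cauchy in $L^2_{\tilde\mu}$, so it converges to a unique $\bar p_0\in\mathcal{E}_{2m}^+$, nonzero because $w$ is normalized before the limit is taken. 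Setting $p_0(x,t):=(\sqrt{-t})^{2m}\bar p_0(\tfrac{x}{2\sqrt{-t}})$ and undoing the rescaling gives $\int_{\R^n_+}|u(\cdot,t)-p_0(\cdot,t)|^2G\,dx\to 0$, and the quantitative estimate follows from Cauchy--Schwarz, $\int\left||u|^2-|p_0|^2\right|G\le\big(\int|u-p_0|^2G\big)^{1/2}\big(\int|u+p_0|^2G\big)^{1/2}$, using the summability bound for the first factor, the boundedness of $\|w(\cdot,\tau)\|_{L^2_{\tilde\mu}}$ (equivalently the doubling bound from $H_u(r)\le C_u r^{4m-\gamma_m}$) for the second, and a change of variables; with $a\ge 4$ this is the rate $|\ln\ln(-t)|^{-1}$ of the statement.
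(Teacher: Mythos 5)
Your overall architecture --- conformal self-similar variables, monotone Weiss energy, a logarithmically degenerate epiperimetric inequality, and a resulting log-log modulus --- is indeed the paper's, but there is a decisive gap in how you close the argument. You posit an epiperimetric gain with logarithmic exponent $a>2$ and observe that the {\L}ojasiewicz--Simon summability $\int_\tau^\infty\|\partial_s w\|_{L^2_\mu}\,ds\lesssim(\log|\log r|)^{-(a/2-1)}$ requires $a\ge 4$ to deliver the claimed rate. However, the paper's epiperimetric inequality (Proposition \ref{prop:homo2m2}) holds with $a=2$ exactly, and with $a=2$ the quantity $\int\|\partial_\tau w\|\,d\tau$ \emph{diverges}: the corresponding integral $\int dW/\bigl(W|\ln W|\bigr)$ blows up like $\ln|\ln W|$. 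Consequently there is no quantitative Cauchy estimate for $w(\cdot,\tau)$ in $L^2_\mu$, which is why the theorem asserts only qualitative convergence $\int|u-p_0|^2G\to 0$; the rate $|\ln\ln(-t)|^{-1}$ is attached to $\int\bigl||u|^2-|p_0|^2\bigr|G$ and is obtained not from Cauchy--Schwarz applied to a nonexistent $L^2$-rate, but from the separate identity $W_{2m}(\tilde u(\tau))=-\tfrac12\partial_\tau\|\tilde u(\tau)\|_{L^2_\mu}^2$ (equation \eqref{eq:w2}), whose integral $\int_\tau^\infty W\,ds\lesssim 1/\ln\tau$ converges even though $\int\sqrt{W}\,ds$ does not. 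In effect your plan presupposes the sharper $a\ge 4$ estimate, which the paper does not prove and explicitly flags as an open problem.

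Two further structural mismatches are worth noting. First, the paper projects the rescaled solution onto the \emph{linear} space $\mathcal{E}_{2m}$, not onto the cone $\mathcal{E}_{2m}^+$, and stresses that this is essential at $\kappa=2m$: unlike $\mathcal{E}_{3/2}$, the eigenspace $\mathcal{E}_{2m}$ is not generated by a single blow-up profile, and the linear projection produces the clean orthogonality relations \eqref{eq:orth2} and the explicit evolution equations \eqref{eq:lambda2m} for the Fourier coefficients; the nearest-point cone projection you propose would sacrifice both. Second, the $|\ln W|^2$ enhancement is not obtained as a {\L}ojasiewicz inequality for a real-analytic reduced kernel energy: it comes from a compactness and contradiction argument, whose technical heart is the estimate of the boundary term $\int_{\{y_n=0\}}(p)_-\,\partial_n\tilde v\,d\mu$ by comparing $(p)_-$ to the fixed positive eigenfunction $h_{2m}$ and splitting according to whether $(p)_-/h_{2m}$ attains its sup inside or outside a ball $B'_{R_0}$ with $R_0^2\sim|\ln W_{2m}|$. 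The exponent $2$ in $|\ln W|^2$ emerges precisely from this non-compactness of $\R^{n-1}\times\{0\}$, not from analyticity of a finite-dimensional energy.
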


It is possible to generalize Theorem \ref{thm:2m} to a nonzero inhomogeneity $f$, where $f$ satisfies an additional vanishing property at $(0,0)$: $|f(x,t)|\leq M(|x|+\sqrt{-t})^{2(m-1+\epsilon_0)}$ for some $\epsilon_0>0$ and $M>0$, cf. Section \ref{sec:perturbation}. We state and prove the theorem for $f=0$ to avoid the technicalities caused by the inhomogeneity such that the proofs are neater. The assumptions of Theorem \eqref{thm:2m} are satisfied at the free boundary points with the frequency $2m$, cf. \cite{DGPT}, but here we do not rely on the existence of the limit of the frequency function.

Note that in Theorem \ref{thm:2m} we obtain a logarithmic decay (in $\ln(-t)$) of the $L^2$ norm instead of an exponential decay as in Theorem \ref{thm:32}. A polynomial decay rate of this kind towards the asymptotic solutions was obtained originally in the elliptic case \cite{CSV17,CSV18, CSV18II}. Moreover, in the classical obstacle problem it was shown, that there is in general no exponential decay rate at singular points, cf. \cite{FS18}.

\subsection{Main ideas of the proof} The proofs for Theorem \ref{thm:32} and Theorem \ref{thm:2m} are based on a \emph{dynamical system approach}, where we establish a decay rate for the Weiss energy $W_{\kappa}$, $\kappa=3/2$ and $\kappa=2m$, in the self-similar conformal coordinates. In the elliptic problems, such change of coordinates corresponds to $(r,\theta)\mapsto (t,\theta)=(-\ln r, \theta)$ from $(0,1]\times \mathbb{S}^{n-1}$ to $[0,\infty)\times \mathbb{S}^{n-1}$, which transforms the original problem around the free boundary point at $0$ to a dynamical system on $\mathbb{S}^{n-1}$. The equilibrium of the dynamical system then corresponds to (back to the original coordinates) the blow-up limits at the origin. In the parabolic setting, we can (formally) formulate our problem in the self-similar conformal coordinates as a gradient flow of the Weiss energy under the convex constraint $u\geq 0$ on $\{x_n=0\}$. The relation between the Weiss energy and the evolution of certain quantities thus becomes more transparent, cf. Lemma \ref{lem:Weiss_conf}. Very different from the elliptic problem, in the parabolic setting the dynamical system is on the whole space $\R^n$ (instead of $\mathbb{S}^{n-1}$). The non-compactness brings additional difficulties and we could not get the polynomial decay rate towards general blow-ups at singular points as in the elliptic setting (cf. \cite{CSV17, CSV18}). It is unlikely that the above decay rate is optimal, thus it is a very interesting open problem to improve the decay rate and further explore the optimality. 

The main steps in the proof are discrete decay estimates for the Weiss energy $W_{\kappa}$, $\kappa=3/2$ and $\kappa=2m$, which can be viewed as parabolic epiperimetric inequalities. Epiperimetric inequalities were introduced by G. Weiss \cite{Weiss99} to the classical obstacle problem and they continue to be a subject of intense research interest in the elliptic setting \cite{CSV17,CSV18,CSV18II, FS16, GPSVG15,RS17}. 

\subsection{Known results} Existence and uniqueness of solutions for given initial and boundary data follow from the classical theory of variational inequalities, cf. for instance, \cite{DL76}. For sufficiently regular inhomogeneities, solutions $u$ satisfy the pointwise regularity $\nabla_x u\in H^{1/2,1/4}_{loc}$ (H\"older $1/2$ in space and $1/4$ in time) and $\p_t u\in L^\infty_{loc}$, cf. \cite{AU, ACM18, DGPT, PZ19}. Such regularity is optimal at least in the space variables due to the stationary solution $\Ree(x_{n-1}+i|x_n|)^{3/2}$. Classification of free boundary points is based on the Almgren and Poon's type monotonicity formulas, cf. \cite{DGPT}. The regular free boundary $\Gamma_{3/2}$ consists of those free boundary points which have the lowest frequency $3/2$. It is relatively open (possibly empty) and locally a smooth graph given by $x_{n-1}=f(x'',t)$ up to a rotation for smooth inhomogeneities, cf. \cite{ASZ17,PS14,PZ19}. The so-called singular set consists of those free boundary points whose frequency are even integers $2m$, $m\geq 1$. Singular set can be equivalently characterized by the density of the contact set, i.e. it consists of free boundary points at which the contact set $\Lambda_u$ has zero Lebesgue density, cf. \cite{DGPT}. Finer structure of the singular set is studied in \cite{DGPT}. 

We briefly comment about our results and the related literature. Theorem \ref{thm:32} allows to provide a simpler proof for the openness of the regular free boundary and its space-time regularity (cf. Section \ref{sec:consequences}), which was shown in \cite{ACM18, DGPT, PS14} by using a boundary Harnack approach. In particular, we do not require the optimal spacial regularity or continuity of the time derivative of the solutions. Theorem \ref{thm:2m} generalizes the results about the singular set in \cite{DGPT} in the sense that we obtain a log-log modulus of continuity of the $L^2$ norm, which implies a frequency gap around the $2m$-frequency points (cf. Section \ref{sec:consequences}). We mention that compared with the recent progress on the fine structure of the singular set for elliptic problems, for instance \cite{FJ18,FS18,FSpadaro18, FS19, STT18}, the parabolic counterpart is much less known. The logarithmic epiperimetric inequality was recently established for the elliptic obstacle and thin obstacle problems \cite{CSV17,CSV18,CSV18II}, which inspired our paper. Very different from the proofs in \cite{CSV17, CSV18, GPSVG15} and the existing epiperimetric inequality approach for the parabolic problem (where they reduce the parabolic problem to a stationary problem by treating the time derivative as inhomogeneity), our approach is purely dynamical and does not rely on the (almost) minimality. Hence it can possibly be generalized to study the asymptotics around \emph{critical points} instead of energy minimizers for a broader class of free boundary problems.  
It is also possible to apply our approach to the Signorini problem for the degenerate parabolic operators considered in \cite{ACM18,AT18,BDGP,BDGPII,BG18}.\\

The rest of the paper is structured as follows: in Section \ref{sec:conformal} we introduce the conformal change of coordinates, reformulate our problem in the new coordinates, and explore the role of the Weiss energy; In Section \ref{sec:dynamical} we prove discrete
decay estimates for the Weiss energy $W_\kappa$. For simplicity we assume that the inhomogeneity $f$ vanishes. The idea of the proof remains the same for nonzero inhomogeneities, as one can see from Section \ref{sec:perturbation}. In Section \ref{sec:consequences} we show the consequences of the decay estimates, for example, we prove the $C^{1,\alpha}$ regularity of the regular free boundary and the frequency gap around $2m$-frequency. In the last section we will show how to modify the proof in Section \ref{sec:dynamical} to the inhomogeneous setting. In the appendix we provide a short proof for the characterization of the second Dirichlet-Neumann eigenspace for the Ornstein-Uhlenbeck operator.

\section{Conformal self-similar coordinates and Weiss energy}
\label{sec:conformal}

In the sequel, it will prove convenient to work in conformal self-similar coordinates. This will simplify many of the computations, which will be carried out for the Weiss energy.

Thus, we consider the following change of variables, which should be viewed as the analogue of conformal polar coordinates:

\begin{lemma}
\label{lem:coordinates}
Let $u:S_2^+ \rightarrow \R$ be a solution to \eqref{eq:parabolic}. We consider the change of coordinates
\begin{equation}\label{eq:conf}
\begin{split}
\mathcal{T}:\R^{n}\times [-1,0)&\rightarrow \R^n\times[0,\infty)\\
(x,t)&\mapsto (y,\tau)= \left(\frac{x}{2\sqrt{-t}}, -\ln(-t)\right).
\end{split}
\end{equation}
For $\kappa>0$ we denote
\begin{align*}
\tilde{u}_\kappa(y,\tau):=\frac{u(x,t)}{(\sqrt{-t})^\kappa}\big |_{(x,t)=\mathcal{T}^{-1}(y,\tau)}=e^{\tau\kappa/2}u(2e^{-\tau/2}y, -e^{-\tau}).
\end{align*}
Then, $\tilde{u}_{\kappa}$ is a solution to
\begin{align}
\label{eq:bulk_conf}
\p_\tau\tilde{u}+\frac{y}{2}\cdot \nabla \tilde{u}-\frac{1}{4}\Delta \tilde{u}-\frac{\kappa}{2}\tilde{u}=0 \text{ in } \R^n_{+}\times [0,\infty)
\end{align}
with the Signorini condition
\begin{align}
\label{eq:boundary_conf}
\tilde{u}\geq 0, \  \p_n\tilde{u}(y)\leq 0, \ \tilde{u}\p_n\tilde{u}=0 \text{ on } \{y_n=0\}.
\end{align}
\end{lemma}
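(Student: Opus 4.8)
The proof is a direct change-of-variables computation via the chain rule; no compactness or regularity theory beyond the standing assumptions is needed. First I would record the inverse of $\mathcal{T}$ explicitly: from $(y,\tau)=\mathcal{T}(x,t)$ one reads off $-t=e^{-\tau}$, hence $\sqrt{-t}=e^{-\tau/2}$ and $x=2\sqrt{-t}\,y=2e^{-\tau/2}y$. Since $t<0$ the map $\mathcal{T}$ is a smooth diffeomorphism of $\R^n_+\times[-1,0)$ onto $\R^n_+\times[0,\infty)$, and because the dilation factor $2e^{-\tau/2}$ is strictly positive it carries $\{x_n=0\}$ bijectively onto $\{y_n=0\}$ and $\{x_n>0\}$ onto $\{y_n>0\}$. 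Thus $\tilde u_\kappa$ is well defined and, by assumption (C), of class $C^1$ up to $\{y_n=0\}$ on the set where $u$ is, with the boundary portion in the new coordinates being exactly $\{y_n=0\}$.

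Next I would differentiate $\tilde u(y,\tau)=e^{\tau\kappa/2}u(2e^{-\tau/2}y,-e^{-\tau})$, writing $x=2e^{-\tau/2}y$ and $t=-e^{-\tau}$. The spatial chain rule gives $\nabla_y\tilde u=2e^{-\tau/2}e^{\tau\kappa/2}\nabla_x u$ and $\Delta_y\tilde u=4e^{-\tau}e^{\tau\kappa/2}\Delta_x u$, while $\p_\tau x=-e^{-\tau/2}y$ and $\p_\tau t=e^{-\tau}$ give $\p_\tau\tilde u=\frac{\kappa}{2}\tilde u+e^{\tau\kappa/2}\bigl(-e^{-\tau/2}y\cdot\nabla_x u+e^{-\tau}\p_t u\bigr)$. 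Inverting the first two relations to express $\nabla_x u$ and $\Delta_x u$ through $\nabla_y\tilde u$ and $\Delta_y\tilde u$, and then substituting $\p_t u=\Delta_x u$ from \eqref{eq:parabolic} (for $f=0$; in general the right-hand side picks up the term $(\sqrt{-t})^{2-\kappa}f$ evaluated at $\mathcal{T}^{-1}$, which one simply carries along), every power of $e^{\tau}$ cancels: the three contributions become $-\frac12 y\cdot\nabla_y\tilde u$, $+\frac14\Delta_y\tilde u$ and $+\frac{\kappa}{2}\tilde u$. Collecting them yields precisely $\p_\tau\tilde u+\frac{y}{2}\cdot\nabla\tilde u-\frac14\Delta\tilde u-\frac{\kappa}{2}\tilde u=0$, which is \eqref{eq:bulk_conf}.

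For the boundary relations \eqref{eq:boundary_conf} I would restrict to $\{y_n=0\}$, i.e. $\{x_n=0\}$, where by assumption (C) the Signorini conditions in \eqref{eq:parabolic} hold in the classical pointwise sense. There $\tilde u=e^{\tau\kappa/2}u$ and $\p_n\tilde u=\p_{y_n}\tilde u=2e^{-\tau/2}e^{\tau\kappa/2}\p_{x_n}u$ differ from $u$ and $\p_{x_n}u$ only by strictly positive factors, so the three conditions $u\ge0$, $\p_{x_n}u\le0$, $u\,\p_{x_n}u=0$ transfer one-for-one to $\tilde u\ge0$, $\p_n\tilde u\le0$, $\tilde u\,\p_n\tilde u=0$.

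There is no genuine obstacle here; the content is entirely bookkeeping, and the only points requiring care are: (i) tracking the weights $e^{\pm\tau/2}$ and $e^{\pm\tau\kappa/2}$ so that they cancel in the bulk equation — in particular the zeroth-order coefficient $-\frac{\kappa}{2}$ is exactly the term produced when $\p_\tau$ hits the prefactor $e^{\tau\kappa/2}$, which is why $\kappa$ enters, while the transport term $\frac{y}{2}\cdot\nabla$ originates from $\p_\tau x$; and (ii) being consistent about the normal variable, noting that $\p_n$ in \eqref{eq:boundary_conf} means $\p_{y_n}$ and is proportional to $\p_{x_n}$ with a positive constant, so all sign conditions are preserved.
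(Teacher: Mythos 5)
Your proof is correct and is exactly the ``direct computation'' the paper appeals to without writing out: the paper's proof is the single line ``The proof follows from a direct computation,'' and your chain-rule bookkeeping (including the remark that for $f\neq 0$ the right-hand side becomes $e^{\tau(\kappa/2-1)}\tilde f$, consistent with \eqref{eq:inhomogeneous} later in Section~\ref{sec:perturbation}) supplies precisely that computation.
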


\begin{proof}
The proof follows from a direct computation.
\end{proof}

\begin{rmk}
\label{rmk:reverse}
For later use, we remark that the above change of coordinates is reversible. In particular, if $\tilde{u}_\kappa$ is a stationary solution to \eqref{eq:bulk_conf} and \eqref{eq:boundary_conf}, then the function
\begin{align*}
u(x,t):= (\sqrt{-t})^{\kappa} \tilde{u}_\kappa\left( \frac{x}{2 \sqrt{-t}} \right), \ t<0
\end{align*}
is a \emph{parabolically $\kappa$-homogeneous} solution to \eqref{eq:parabolic}, i.e. $u(x,t)$ solves the equation \eqref{eq:parabolic} and satisfies $$u(\lambda x, \lambda^2 t)= \lambda^{\kappa} u(x,t)\text{ for all }\lambda >0.$$
\end{rmk}

\begin{rmk}\label{rmk:Gaussian_bound}
Let $u:S_1^+\rightarrow \R$ satisfy the Sobolev regularity (A) and (B). Let $\tilde{u}_\kappa$ be obtained from $u$ as in Lemma \ref{lem:coordinates}. Then it holds
 $$\|\tilde{u}_\kappa(y,0)\|_{L^2_\mu(\R^n_+)}=\pi^{n/2}\|u(x,-1)\|_{L^2_{\tilde{\mu}}(\R^n_+)}, \quad d\mu(y):=e^{-|y|^2}dy, \ d\tilde{\mu}(x):=e^{-\frac{|x|^2}{4}}dx.$$
Moreover, for any $0\leq \tau_1<\tau_2<\infty$, there exists a constant $C$ depending on $\tau_1, \tau_2, \kappa, n, \|f\|_{L^\infty}$ such that
\begin{equation*}
\begin{split}
\int_{\tau_1}^{\tau_2} \|D^2\tilde{u}_\kappa\|^2_{L^2_\mu} + \|\p_\tau\tilde{u}_\kappa\|^2_{L^2_\mu} d\tau \leq C,\\
\sup_{\tau\in (0,\tau_2]}\|\nabla\tilde{u}_\kappa(\cdot, \tau)\|_{L^2_\mu}+ \|\tilde{u}_\kappa(\cdot,\tau)\|_{L^2_\mu}\leq C.
\end{split}
\end{equation*}
Here and in the sequel $\|\cdot\|_{L^2_\mu}:=\|\cdot\|_{L^2_\mu(\R^n_+)}$. Thus
\begin{equation}\label{eq:sob_conf}
\tilde{u}_\kappa \in L^\infty_{loc}([0,\infty); W^{1,2}_\mu)\cap L^2_{loc}([0,+\infty);W^{2,2}_\mu),\ \p_\tau\tilde{u}_\kappa\in L^2_{loc}([0,\infty); L^2_\mu).
\end{equation}
\end{rmk}

Now we define the Weiss energy associated to a solution $\tilde{u}:=\tilde{u}_\kappa$ to \eqref{eq:bulk_conf}--\eqref{eq:boundary_conf} and derive relevant quantities of the Weiss energy.

\begin{lemma}
\label{lem:Weiss_conf}
Let $\tilde{u}=\tilde{u}_\kappa$ be a solution to \eqref{eq:bulk_conf}--\eqref{eq:boundary_conf} and satisfy \eqref{eq:sob_conf}. We define the Weiss energy
\begin{equation}
\label{eq:Weiss_new}
\begin{split}
W_\kappa(\tilde{u}(\tau)):=\int_{\R^n_+}\frac{1}{4}|\nabla \tilde{u}(y,\tau)|^2-\frac{\kappa}{2}\tilde{u}^2(y,\tau) d\mu.
\end{split}
\end{equation}
Then for $0\leq \tau_1 \leq \tau_2 < \infty $
\begin{align}\label{eq:Weiss_new2}
W_{\kappa}(\tilde{u}(\tau_2)) - W_{\kappa}(\tilde{u}(\tau_1))
\leq -2 \int\limits_{\tau_1}^{\tau_2} (\p_{\tau} \tilde{u}(y,\tau))^2 d\mu d\tau.
\end{align}
Further,
\begin{align}\label{eq:w2}
W_\kappa(\tilde{u}(\tau))=-\frac{1}{2}\p_\tau \|\tilde{u}(\tau)\|_{L^2_\mu}^2, \ a.e.\ \tau\in [0,\infty).
\end{align}
\end{lemma}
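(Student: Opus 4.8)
The plan is to use that, in the conformal variables, the operator in \eqref{eq:bulk_conf} is, up to sign, the weighted Laplacian of the Gaussian measure $d\mu$. Since $\di(e^{-|y|^2}\nabla w)=e^{-|y|^2}(\Delta w-2y\cdot\nabla w)$, equation \eqref{eq:bulk_conf} may be rewritten as
\[
\p_\tau\tilde u=\tfrac14 e^{|y|^2}\di\!\big(e^{-|y|^2}\nabla\tilde u\big)+\tfrac{\kappa}{2}\tilde u\qquad\text{in }\R^n_+\times[0,\infty),
\]
so that $\tilde u$ is, formally, the $L^2_\mu$-gradient flow of $W_\kappa$ under the constraint \eqref{eq:boundary_conf}. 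Every computation below rests on the weighted integration-by-parts identity
\[
\int_{\R^n_+}v\,e^{|y|^2}\di\!\big(e^{-|y|^2}\nabla w\big)\,d\mu=-\int_{\R^n_+}\nabla v\cdot\nabla w\,d\mu-\int_{\{y_n=0\}}v\,\p_n w\,e^{-|y'|^2}\,d\mathcal H^{n-1},
\]
valid for $v,w\in W^{2,2}_\mu(\R^n_+)$, where the contributions at spatial infinity vanish because of the Gaussian weight together with the regularity \eqref{eq:sob_conf}.

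To prove \eqref{eq:w2}, note that \eqref{eq:sob_conf} makes $\tau\mapsto\|\tilde u(\tau)\|_{L^2_\mu}^2$ absolutely continuous with $\p_\tau\|\tilde u(\tau)\|_{L^2_\mu}^2=2\int_{\R^n_+}\tilde u\,\p_\tau\tilde u\,d\mu$ for a.e.\ $\tau$. Testing the rewritten PDE with $\tilde u(\cdot,\tau)\in W^{2,2}_\mu$ and using the identity above gives
\[
\int_{\R^n_+}\tilde u\,\p_\tau\tilde u\,d\mu=-\tfrac14\int_{\R^n_+}|\nabla\tilde u|^2\,d\mu+\tfrac{\kappa}{2}\int_{\R^n_+}\tilde u^2\,d\mu-\tfrac14\int_{\{y_n=0\}}\tilde u\,\p_n\tilde u\,e^{-|y'|^2}\,d\mathcal H^{n-1},
\]
and the boundary integral vanishes by the complementarity relation $\tilde u\,\p_n\tilde u=0$ from \eqref{eq:boundary_conf}. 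Hence $\int_{\R^n_+}\tilde u\,\p_\tau\tilde u\,d\mu=-W_\kappa(\tilde u(\tau))$, which is \eqref{eq:w2}.

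For \eqref{eq:Weiss_new2}, the same regularity makes $\tau\mapsto W_\kappa(\tilde u(\tau))$ absolutely continuous with $\p_\tau W_\kappa(\tilde u(\tau))=\int_{\R^n_+}\big(\tfrac12\nabla\tilde u\cdot\nabla\p_\tau\tilde u-\kappa\,\tilde u\,\p_\tau\tilde u\big)d\mu$ for a.e.\ $\tau$. I would integrate the gradient term by parts in $y$, substitute $e^{|y|^2}\di(e^{-|y|^2}\nabla\tilde u)=4\p_\tau\tilde u-2\kappa\tilde u$ from the PDE, and observe that the terms proportional to $\int_{\R^n_+}\tilde u\,\p_\tau\tilde u\,d\mu$ cancel, which yields
\[
\p_\tau W_\kappa(\tilde u(\tau))=-2\int_{\R^n_+}(\p_\tau\tilde u)^2\,d\mu-\tfrac12\int_{\{y_n=0\}}\p_\tau\tilde u\,\p_n\tilde u\,e^{-|y'|^2}\,d\mathcal H^{n-1}.
\]
It then remains to check that the free-boundary integral is nonnegative; in fact its integrand vanishes a.e.\ on $\{y_n=0\}$: on $\{\tilde u(\cdot,\tau)>0\}$ the complementarity relation and the continuity of $\nabla\tilde u$ from assumption (C) force $\p_n\tilde u=0$, while on the coincidence set $\{\tilde u(\cdot,\tau)=0\}$ the trace of $\tilde u$ on $\{y_n=0\}$ — which is $W^{1,2}$ in $\tau$ by \eqref{eq:sob_conf} — attains its minimum value $0$, so that $\p_\tau\tilde u=0$ a.e.\ there. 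Consequently $\p_\tau W_\kappa(\tilde u(\tau))\le-2\int_{\R^n_+}(\p_\tau\tilde u)^2\,d\mu$ for a.e.\ $\tau$, and integrating over $[\tau_1,\tau_2]$ gives \eqref{eq:Weiss_new2}.

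The algebra is routine; the real work is justifying each step from the Gaussian–Sobolev regularity \eqref{eq:sob_conf} alone — the integrations by parts (including the decay at infinity) and the absolute continuity of $\tau\mapsto W_\kappa(\tilde u(\tau))$ and $\tau\mapsto\|\tilde u(\tau)\|_{L^2_\mu}^2$ — and, above all, pinning down the sign of the free-boundary term, which is the only place the Signorini structure is used. A clean way to avoid the trace argument there is to rewrite \eqref{eq:bulk_conf}–\eqref{eq:boundary_conf} as the variational inequality $\int_{\R^n_+}\p_\tau\tilde u\,(v-\tilde u)\,d\mu+\tfrac14\int_{\R^n_+}\nabla\tilde u\cdot\nabla(v-\tilde u)\,d\mu-\tfrac{\kappa}{2}\int_{\R^n_+}\tilde u\,(v-\tilde u)\,d\mu\ge0$ for all $v\in W^{1,2}_\mu(\R^n_+)$ with $v\ge0$ on $\{y_n=0\}$, to test it with $v=\tilde u(\cdot,\tau-h)$, to use the elementary identity $a(b-a)=\tfrac12(b^2-a^2)-\tfrac12(b-a)^2$ on the quadratic terms, and to let $h\to0^+$; this delivers \eqref{eq:Weiss_new2} directly with the correct sign and constant.
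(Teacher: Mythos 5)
Your direct computation of $\tfrac{d}{d\tau}W_\kappa(\tilde u(\tau))$ is the formal heart of the matter, but as stated it has a genuine gap that the regularity \eqref{eq:sob_conf} alone does not close. The quantity $\p_\tau W_\kappa(\tilde u(\tau)) = \int\bigl(\tfrac12\nabla\tilde u\cdot\nabla\p_\tau\tilde u-\kappa\tilde u\p_\tau\tilde u\bigr)d\mu$ requires $\nabla\p_\tau\tilde u\in L^2_\mu$ to make sense pointwise in $\tau$, whereas \eqref{eq:sob_conf} only gives $\p_\tau\tilde u\in L^2_{loc}(L^2_\mu)$ and $\tilde u\in L^2_{loc}(W^{2,2}_\mu)$; the absolute continuity of $\tau\mapsto W_\kappa(\tilde u(\tau))$ is therefore not automatic. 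More seriously, the boundary term $\int_{\{y_n=0\}}\p_\tau\tilde u\,\p_n\tilde u$ is not obviously well-defined: the trace of $\p_\tau\tilde u$ on $\{y_n=0\}$ does not exist from $\p_\tau\tilde u\in L^2_{loc}(L^2_\mu)$, and the asserted ``$W^{1,2}$ in $\tau$'' regularity of the boundary trace of $\tilde u$ is not a consequence of \eqref{eq:sob_conf}. This is exactly why the paper does not differentiate $W_\kappa$ directly; instead it introduces a smooth penalized problem with Robin boundary condition $\p_n\tilde u^\epsilon=\beta_\epsilon(\tilde u^\epsilon)$, proves the energy identity for $\tilde u^\epsilon$, and then passes to the limit after testing in $\tau$ against $\varphi\in C^\infty_c$, disposing of the boundary contribution by writing $\beta_\epsilon(\tilde u^\epsilon)\p_\tau\tilde u^\epsilon=\p_\tau\mathcal B_\epsilon(\tilde u^\epsilon)$ and invoking Lemma 5.1 of \cite{DGPT}.

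That said, the alternative you sketch in your closing paragraph is sound and genuinely different from the paper's penalization route. Testing the variational inequality with $v=\tilde u(\cdot,\tau-h)$ (admissible because $\tilde u(\cdot,\tau-h)\ge0$ on $\{y_n=0\}$), applying $a(b-a)=\tfrac12(b^2-a^2)-\tfrac12(b-a)^2$ to the gradient and zero-order terms, discarding the nonpositive term $-\tfrac18\|\nabla\tilde u(\tau-h)-\nabla\tilde u(\tau)\|^2_{L^2_\mu}$, integrating in $\tau$ over $[\tau_1+h,\tau_2]$, dividing by $h$, and letting $h\to0^+$ produces \eqref{eq:Weiss_new2} with the correct sign and constant, using only $\p_\tau\tilde u\in L^2_{loc}(L^2_\mu)$ and the Lebesgue-point characterisation of the endpoints; no boundary trace is ever formed. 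This difference-quotient argument is more elementary than the penalization used in the paper, avoids the need to first establish bounds on the penalized solutions in the Gaussian space, and delivers the integrated inequality \eqref{eq:Weiss_new2} directly. It does not, however, give the pointwise derivative formula or the claim that the boundary contribution vanishes identically — so it should be stated as the proof of \eqref{eq:Weiss_new2} outright, rather than as a patch for the formal computation. Your derivation of \eqref{eq:w2} is fine: there the test function is $\tilde u(\cdot,\tau)\in W^{2,2}_\mu$, the boundary term is $\int_{\{y_n=0\}}\tilde u\,\p_n\tilde u$, and complementarity kills it pointwise; no trace of $\p_\tau\tilde u$ is needed.
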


\begin{proof}
On a formal level the estimate \eqref{eq:Weiss_new2} can be deduced by differentiating the functional $W_{\kappa}(\tilde{u}(\tau))$. However, in order to give meaning to the arising boundary contributions, it is necessary to work in a regularized framework which is achieved by penalization. More precisely, we consider the following penalized version of \eqref{eq:boundary_conf}, \eqref{eq:bulk_conf}: Let $\beta_{\epsilon}: \R \rightarrow \R$ be a smooth function satisfying the following properties
\begin{align*}
\beta_{\epsilon}(s) &=0 \mbox{ for } s \geq 0,\\
\beta_{\epsilon}(s) &= \epsilon + \frac{s}{\epsilon} \mbox{ for all } s \leq - 2 \epsilon^2,\\
\beta_{\epsilon}'(s) &\geq 0 \mbox{ for all } s \in \R.
\end{align*}
We approximate $\tilde{u}$ by solutions to the penalization problem
\begin{align*}
\p_{\tau} \tilde{u}^{\epsilon} + \frac{y}{2}\cdot \nabla \tilde{u}^{\epsilon} - \frac{1}{4} \Delta \tilde{u}^{\epsilon} - \frac{\kappa}{2} \tilde{u}^{\epsilon} & = 0 \mbox{ on } \R^n_+ \times (0,\tau_2),\\
\p_{n} \tilde{u}^{\epsilon} & = \beta_{\epsilon}(\tilde{u}^{\epsilon}) \mbox{ on } \{y_n=0\} \times (0,\tau_2) ,\\
\tilde{u}^{\epsilon}(y,0)&= \tilde{u}_0^{\epsilon}(y) \mbox{ at } \R^n_+ \times \{0\}.
\end{align*}
Here $\tilde{u}_0^{\epsilon}$ is a smooth compact supported function such that $\|\tilde{u}_0^\epsilon-\tilde{u}(\cdot, 0)\|_{L^2_\mu}\rightarrow 0$ as $\epsilon\rightarrow 0$.  There exists a unique solution $\tilde{u}^\epsilon$ with a polynomial growth as $|y|\rightarrow \infty$. The function $\tilde{u}^{\epsilon}$ is smooth and satisfies the uniform bound in the Gaussian space (cf. Chap. 3 in \cite{DGPT}): there exists $C=C(\kappa, \tau_2, \|\tilde{u}^\epsilon_0\|_{L^2_\mu}, n)$ such that
\begin{equation*}
\|D^2\tilde{u}^\epsilon\|_{L^2_\tau L^2_\mu}+\|\p_\tau\tilde{u}^\epsilon\|_{L^2_\tau L^2_\mu}+ \|\tilde{u}^\epsilon\|_{L^\infty_\tau L^2_\mu}+\|\nabla\tilde{u}^\epsilon\|_{L^\infty_\tau L^2_\mu}\leq C.
\end{equation*}
In particular, using the equation for $\tilde{u}^{\epsilon}$, it is then possible to compute as follows:
\begin{align*}
\frac{d}{d\tau} W_\kappa(\tilde{u}^{\epsilon}(\tau))
= - 2 \int\limits_{\R^n_+} (\p_{\tau} \tilde{u}^{\epsilon})^2 d\mu  - \frac{1}{2}\int\limits_{\{y_n=0\}} \beta_\epsilon(\tilde{u}^\epsilon)(\p_{\tau} \tilde{u}^{\epsilon}) d\mu.
\end{align*}
We test this identity with $\varphi \in C^{\infty}_c((0,\infty))$ which yields
\begin{align*}
\int\limits_{0}^{\infty} \varphi(\tau) \frac{d}{d\tau} W_\kappa(\tilde{u}^{\epsilon}(\tau)) d\tau
&= -2 \int\limits_{0}^{\infty} \varphi(\tau) \int\limits_{\R^n_+} (\p_{\tau} \tilde{u}^{\epsilon})^2 d\mu d\tau\\
& \quad - \frac{1}{2} \int\limits_{0}^{\infty } \varphi(\tau) \int\limits_{\{y_n=0\}}  \beta_\epsilon(\tilde{u}^\epsilon)(\p_{\tau} \tilde{u}^{\epsilon}) d\mu d \tau.
\end{align*}
The a priori bounds for $\tilde{u}^{\epsilon}$ leads to space time $W^{1,2}_{\mu}$ uniform estimates. Hence it is possible to use lower semi-continuity to pass to the weak limit in the bulk integral. Using that $\beta_\epsilon(\tilde{u}^\epsilon)\p_{\tau} \tilde{u}^{\epsilon} =\p_\tau \mathcal{B}_\epsilon (\tilde{u}^\epsilon)$, where $\mathcal{B}_\epsilon$ is the primitive function of $\beta_\epsilon$ with $\mathcal{B}_\epsilon(0)=0$, the boundary integral is treated as in Lemma 5.1 ($3^\circ$) in \cite{DGPT} and can be shown to vanish in the limit. Hence for $\varphi\geq 0$ we infer
\begin{align*}
-\int\limits_{0}^{\infty} \varphi'(\tau) W_{\kappa}(\tilde{u}(\tau)) d\tau
\leq  -2 \int\limits_{0}^{\infty} \varphi(\tau) \int\limits_{\R^n_+} (\p_{\tau} \tilde{u})^2 d\mu d\tau,
\end{align*}
which yields the desired result. Approximating the characteristic function $\chi_{[\tau_1,\tau_2]}(t)$ by smooth positive functions then yields the claim on the sign of the difference of the Weiss functionals.

Finally, the identity \eqref{eq:w2} is a consequence of the equation \eqref{eq:boundary_conf} in conjunction with the Signorini condition $\tilde{u}\p_n\tilde{u}=0$ on $\{y_n=0\}$.
\end{proof}

As direct consequences of Lemma \ref{lem:Weiss_conf}, we infer the following properties:

\begin{cor}
\label{cor:Weiss}
Let $\tilde{u}:\R^n_+ \times [0,\infty) \rightarrow \R $ be a solution to \eqref{eq:boundary_conf}, \eqref{eq:bulk_conf} which satisfies \eqref{eq:sob_conf}. Then,
the function
\begin{align*}
\tau\mapsto \|\tilde{u}(\tau)\|_{L^2_\mu}^2
\end{align*}
is convex.
Moreover, $\tilde{u}$ is a stationary solution if and only if $W_{\kappa}(\tilde{u}(\tau))\equiv 0$.
\end{cor}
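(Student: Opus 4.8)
The plan is to derive both assertions directly from Lemma \ref{lem:Weiss_conf}. For the convexity statement, I would use the identity \eqref{eq:w2}, which says that $W_\kappa(\tilde{u}(\tau)) = -\tfrac{1}{2}\p_\tau\|\tilde{u}(\tau)\|_{L^2_\mu}^2$ for a.e. $\tau$. Combining this with the monotonicity \eqref{eq:Weiss_new2}, the function $\tau\mapsto W_\kappa(\tilde{u}(\tau))$ is non-increasing (since the right-hand side of \eqref{eq:Weiss_new2} is non-positive), hence $\tau\mapsto -\tfrac{1}{2}\p_\tau\|\tilde{u}(\tau)\|_{L^2_\mu}^2$ is non-increasing, i.e. $\p_\tau\|\tilde{u}(\tau)\|_{L^2_\mu}^2$ is non-decreasing. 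Since by \eqref{eq:sob_conf} the map $\tau\mapsto\|\tilde{u}(\tau)\|_{L^2_\mu}^2$ is absolutely continuous on compact intervals with a non-decreasing a.e.-derivative, it is convex. One small point of care: \eqref{eq:Weiss_new2} is stated as an inequality between values at two times, so I should first record that this forces $\tau\mapsto W_\kappa(\tilde{u}(\tau))$ to be non-increasing in the pointwise-a.e. (or everywhere, up to choosing the good representative) sense before differentiating.

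For the stationarity characterization, the implication that a stationary solution has $W_\kappa\equiv 0$ follows from \eqref{eq:w2}: if $\tilde{u}$ is stationary then $\p_\tau\tilde{u}\equiv 0$, so $\|\tilde{u}(\tau)\|_{L^2_\mu}^2$ is constant in $\tau$, whence $W_\kappa(\tilde{u}(\tau)) = -\tfrac{1}{2}\p_\tau\|\tilde{u}(\tau)\|_{L^2_\mu}^2 = 0$ for a.e. $\tau$, and by continuity of $\tau\mapsto W_\kappa(\tilde{u}(\tau))$ for all $\tau$. Conversely, if $W_\kappa(\tilde{u}(\tau))\equiv 0$, then applying \eqref{eq:Weiss_new2} with $\tau_1=0$ and an arbitrary $\tau_2$ gives $0 \le -2\int_0^{\tau_2}\int_{\R^n_+}(\p_\tau\tilde{u})^2\,d\mu\,d\tau$, forcing $\p_\tau\tilde{u}\equiv 0$ on $\R^n_+\times(0,\infty)$; hence $\tilde{u}$ is independent of $\tau$ and therefore, by \eqref{eq:bulk_conf}--\eqref{eq:boundary_conf}, a stationary solution.

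I do not expect any genuine obstacle here: the lemma is purely a bookkeeping consequence of the energy inequality and the Rellich-type identity already proved in Lemma \ref{lem:Weiss_conf}. The only mildly delicate point is the regularity needed to legitimately speak of $\p_\tau\|\tilde{u}(\tau)\|_{L^2_\mu}^2$ pointwise and to upgrade ``non-decreasing a.e. derivative'' to ``convex''; this is handled by \eqref{eq:sob_conf}, which gives enough integrability ($\tilde u\in L^\infty_{loc}W^{1,2}_\mu$, $\p_\tau\tilde u\in L^2_{loc}L^2_\mu$) that $\tau\mapsto\|\tilde u(\tau)\|_{L^2_\mu}^2$ is locally Lipschitz with derivative $2\int_{\R^n_+}\tilde u\,\p_\tau\tilde u\,d\mu$, and a standard lemma (an absolutely continuous function with monotone derivative is convex) then finishes it.
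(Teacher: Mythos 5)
Your proof is correct and matches what the paper intends: the paper states Corollary \ref{cor:Weiss} with no proof, calling it a ``direct consequence'' of Lemma \ref{lem:Weiss_conf}, and you fill in exactly that bookkeeping — using \eqref{eq:Weiss_new2} for monotonicity of $W_\kappa$, \eqref{eq:w2} to identify $-2W_\kappa$ with $\p_\tau\|\tilde u\|_{L^2_\mu}^2$, and \eqref{eq:sob_conf} for the absolute continuity needed to upgrade a non-decreasing a.e.-derivative to convexity and to run the stationarity argument. The only cosmetic point is that for the ``stationary $\Rightarrow W_\kappa\equiv 0$'' direction you invoke continuity of $\tau\mapsto W_\kappa(\tilde u(\tau))$, whereas the monotonicity from \eqref{eq:Weiss_new2} alone already upgrades ``$=0$ a.e.'' to ``$=0$ everywhere'' without needing continuity.
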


In the sequel, we will in particular exploit the second observation frequently. When $\kappa=3/2$ and $\kappa=2m$, $m\in \N_+$, by Liouville type theorems, we can characterize stationary solutions.

\begin{prop}\label{prop:wlimit}
Let $\tilde{u}\in W^{2,2}_\mu$ be a solution to
\begin{equation}\label{eq:static}
\begin{split}
-\frac{1}{2}\Delta \tilde{u} + y\cdot \nabla \tilde{u} -\kappa \tilde{u}&=0 \text{ in } \R^n_+,\\
\tilde{u}\geq 0, \ \p_n\tilde{u}\leq 0, \ \tilde{u}\p_n\tilde{u}&=0 \text{ on } \R^{n-1}\times \{0\}.
\end{split}
\end{equation}
We extend $\tilde{u}$ evenly about $\{y_n=0\}$. If $\kappa=3/2$, then
\begin{equation*}
\tilde{u}\in \mathcal{E}_{3/2}:=\{c\Ree (y'\cdot e + i|y_n|)^{3/2}, \ c\geq 0,\ |e|=1, \ e\cdot e_n=0.\}
\end{equation*}
If $\kappa=2m$ for $m\in \N_+$, then
\begin{equation*}
\begin{split}
\tilde{u}\in \mathcal{E}_{2m}^+:=\{p: p=\sum_{\alpha:|\alpha|=2m}\lambda_\alpha H_{\alpha_1}(y_1)\cdots H_{\alpha_n}(y_n), \ \lambda_\alpha\in \R, \\
 p(y',y_n)=p(y',-y_n), \ p(y',0)\geq 0\}.
\end{split}
\end{equation*}
Here $\alpha=(\alpha_1,\cdots, \alpha_n)\in \N^n$ is a multi-index, $H_{\alpha_i}$ is the $1d$-Hermite polynomial of order $\alpha_i$, $i\in \{1,\cdots, n\}$.
\end{prop}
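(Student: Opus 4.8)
The plan is to first reduce the stationary equation to a spectral problem for the Ornstein--Uhlenbeck (Hermite) operator in Gaussian space, and then invoke two separate rigidity statements depending on whether $\kappa = 3/2$ or $\kappa = 2m$. Rewriting \eqref{eq:static}, the equation $-\tfrac12 \Delta \tilde u + y\cdot \nabla \tilde u = \kappa \tilde u$ says precisely that $\tilde u$ is an eigenfunction of $L := -\tfrac12\Delta + y\cdot\nabla$ with eigenvalue $\kappa$, where $L$ is self-adjoint on $L^2_\mu(\R^n)$ after the even extension across $\{y_n = 0\}$ (the Signorini conditions $\tilde u \ge 0$, $\p_n \tilde u \le 0$, $\tilde u \p_n \tilde u = 0$ on $\{y_n = 0\}$ guarantee that the even extension is a weak solution across the hyperplane, since the jump in the normal derivative is $-2\p_n\tilde u \cdot \mathbf 1_{\{\tilde u = 0\}} = 0$). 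On the whole space, the $L^2_\mu$-spectrum of $L$ is $\{0, 1, 2, \dots\}$ (the eigenvalue $k$ has eigenspace spanned by products of 1d Hermite polynomials $H_{\alpha_1}(y_1)\cdots H_{\alpha_n}(y_n)$ with $|\alpha| = k$), so the first thing to record is that a nontrivial $W^{2,2}_\mu$ solution forces $\kappa \in \N_0$, and for $\kappa = 2m$ the eigenfunction is automatically a polynomial of degree $2m$ of the stated form. The constraint $\tilde u(y',0) \ge 0$ then places $\tilde u$ in the cone $\mathcal E_{2m}^+$, which is exactly the claim in the second case.

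The half-integer case $\kappa = 3/2$ is the genuinely nontrivial one, since $3/2$ is not in the spectrum of $L$ on the full space — the point is that the even extension of $\tilde u$ need not be smooth across $\{y_n = 0\}$, so it lies in the eigenspace only in a weak/distributional sense, and the possible singularity along the contact set $\{y' : \tilde u(y',0) = 0\}$ is what allows a non-polynomial solution. Here I would translate back via Remark \ref{rmk:reverse}: $u(x,t) := (\sqrt{-t})^{3/2}\tilde u(x/(2\sqrt{-t}))$ is a parabolically $3/2$-homogeneous global solution to the parabolic Signorini problem with $f = 0$. The classification of such homogeneous solutions of frequency $3/2$ is exactly the content of the parabolic Liouville-type theorem in \cite{DGPT} (the analogue of the elliptic statement that a $3/2$-homogeneous global solution of the thin obstacle problem is, up to rotation and a positive multiple, $\Ree(x_{n-1} + i|x_n|)^{3/2}$): one shows the frequency being constant equal to $3/2$ forces the solution to be $t$-independent, hence a $3/2$-homogeneous solution of the elliptic Signorini problem, and then the elliptic classification (blow-up/dimension reduction, or a direct ODE argument on the spherical cross-section) gives $u = c_0 \Ree(x' \cdot e_0 + i|x_n|)^{3/2}$ with $c_0 \ge 0$, $e_0 \in \mathbb S^{n-1} \cap \{x_n = 0\}$. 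Restricting to $t = -1$ recovers $\tilde u \in \mathcal E_{3/2}$.

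The main obstacle is the $\kappa = 3/2$ case: one must rule out all other $3/2$-homogeneous global Signorini solutions, and the clean way to do this is to appeal to the already-developed monotonicity-formula machinery (Almgren--Poon frequency, the fact that a homogeneous solution has constant frequency, and the reduction to the elliptic problem), which is precisely why the statement cites \cite{DGPT}. A subtlety worth being careful about is the regularity needed to run these arguments — a priori $\tilde u \in W^{2,2}_\mu$, so before applying the classification one should upgrade to the local $C^{1,\alpha}$ / optimal regularity via the interior estimates (assumption (C) applied to the associated global parabolic solution, or the elliptic hypoellipticity away from the thin obstacle combined with the thin-obstacle regularity theory along $\{y_n = 0\}$). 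The even-extension step in the $2m$ case is routine but should be stated carefully, since it is what converts the boundary-value eigenproblem into the full-space Hermite eigenproblem and thereby pins down the eigenvalue to be an even integer (odd integers are excluded because an odd-degree Hermite product cannot be nonnegative on $\{y_n = 0\}$ unless it vanishes identically there, which combined with the even symmetry in $y_n$ and the sign condition forces $\tilde u \equiv 0$; this deserves a one-line justification).
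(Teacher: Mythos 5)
The even-extension step is where this breaks down, and it affects your whole treatment of the $\kappa=2m$ case. The jump in the normal derivative of the even reflection $\bar u(y',y_n):=\tilde u(y',|y_n|)$ across $\{y_n=0\}$ is $2\,\partial_n\tilde u(y',0)\,\mathbf 1_{\{\tilde u(\cdot,0)=0\}}$: the complementarity $\tilde u\,\partial_n\tilde u=0$ kills $\partial_n\tilde u$ only on $\{\tilde u>0\}$, and on the contact set $\partial_n\tilde u$ is merely $\leq 0$, not zero. This jump is \emph{not} zero in general --- for $\tilde u=\Ree(y_{n-1}+i|y_n|)^{3/2}$ one has $\partial_n\tilde u(y',0)=-\tfrac{3}{2}\sqrt{-y_{n-1}}$ on $\{y_{n-1}<0\}$. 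So $\bar u$ is only a \emph{supersolution} of the Ornstein--Uhlenbeck eigenvalue equation, with a nonnegative excess measure concentrated on the contact set, and one cannot read off from this that $\kappa\in\N_0$, nor (for $\kappa=2m$) that $\tilde u$ is a Hermite polynomial. Your own later remark that the even extension ``need not be smooth across $\{y_n=0\}$'' for $\kappa=3/2$ flatly contradicts the earlier claim that the jump vanishes. Closing the gap for $\kappa=2m$ requires the nontrivial fact that solutions with even-integer frequency have vanishing normal derivative on $\{y_n=0\}$ (so that the reflected function is a genuine eigenfunction); this is exactly what Lemma~12.4 of \cite{DGPT} provides, and the paper's proof for this case is simply to invoke that lemma together with Remark~\ref{rmk:reverse}.

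For $\kappa=3/2$, your plan (pass to the parabolically $3/2$-homogeneous global solution, deduce $t$-independence via the frequency machinery, then invoke the elliptic classification) is a legitimate alternative route but again outsources the work to \cite{DGPT}. The paper instead gives a self-contained spectral argument: for any tangential $e$, $v:=\partial_e\tilde u$ is a Dirichlet eigenfunction with eigenvalue $1/2$ on the slit domain $\R^n\setminus\Lambda_{\tilde u}$, and a min--max comparison against the oddly-reflected half-space Dirichlet problem (whose first eigenvalue is $1$, realised by $y_n$) shows the second Dirichlet eigenvalue on the slit domain is at least $1$, hence $v$ cannot change sign; since this holds for every tangential $e$, $\tilde u$ is two-dimensional and ground-state uniqueness pins it down as $c\,\Ree(y'\cdot e+i|y_n|)^{3/2}$. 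Either route is acceptable for $3/2$, but your $2m$ argument as written does not go through.
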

\begin{proof}
\emph{(i) Case $\kappa=3/2$.} We will prove that  $\tilde{u}$ is two dimensional. Given any tangential direction $e$ with $|e|=1$ and $e\cdot e_n=0$, $v:=\p_e \tilde{u}$ solves the Dirichlet eigenvalue problem for $L_0:=-\frac{1}{2}\Delta + y\cdot \nabla$ on $W^{1,2}_0(\R^n\setminus \Lambda_{\tilde{u}}; d\mu)\subset L^2_\mu(\R^n)$:
\begin{equation*}
L_0 v = \frac{1}{2}v \text{ in } \R^n\setminus \Lambda_{\tilde{u}}, \quad v=0\text{ on }\Lambda_{\tilde{u}},
\end{equation*}
where $\Lambda_{\tilde{u}}:=\{(y',0): \tilde{u}(y',0)=0\}$. We claim that $v$ does not change the sign in $\R^n$. Let $0<\lambda_1\leq \lambda_2\leq \cdots$ denote the Dirichlet eigenvalues. Assume that $v$ changes the sign, then necessarily $\lambda_2\leq \frac{1}{2}$. By the min-max theorem,
\begin{equation*}
\begin{split}
 \lambda_2=&\inf\left\{\sup\left\{\frac{1}{2}\int |\nabla v|^2 d\mu: v\in M, \ \|v\|_{L^2_\mu}=1\right\}:\right.\\
  &\qquad \left.M\subset W_0^{1,2}(\R^n\setminus \Lambda_{\tilde{u}};d\mu) \text{ subspace and } dim(M)=2\right\}.
\end{split}
\end{equation*}
We consider the $2d$ subspaces $M$ spanned by the (oddly reflected) Dirichlet eigenfunctions on $\R^n_+$ with $w=0$ on the whole $\R^{n-1}\times \{0\}$. Since $w(y)=y_n$ is the first Dirichlet eigenfunction, whose Rayleigh quotient is equal to $1$, then necessarily $\lambda_2\geq 1$. This is a contradiction. Therefore we conclude that $v=\p_e\tilde{u}$ is nonpositive or nonnegative in the whole space $\R^n$. Since this holds for any tangential direction, it follows that $\tilde{u}$ is of the form $\tilde{u}(y)=\tilde{u}(y'\cdot e, y_n)$ for some tangential direction $e$. In other words, $\tilde{u}$ is two dimensional. Direct computation shows that the function $\Ree(y'\cdot e+ i|y_n|)^{1/2}$ is an eigenfunction. The uniqueness of the principal eigenfunction implies that actually $\p_e \tilde{u}= c\Ree(y'\cdot e+ i|y_n|)^{1/2}$ for some $c\in \R$. Thus $\tilde{u}\in \mathcal{E}_{3/2}$.

\emph{(ii) Case $\kappa=2m$.} This follows directly from Lemma 12.4 of \cite{DGPT} and Remark \ref{rmk:reverse}.
\end{proof}

At the end of this section we compare the Weiss energy in the original coordinates and the conformal coordinates. Firstly, if $\tilde{u}_\kappa$ is associated with a solution $u:S_2^+ \rightarrow \R$ to the parabolic Signorini problem
\eqref{eq:parabolic} as in Lemma \ref{lem:coordinates}, then the Weiss energy of $\tilde{u}_\kappa$ in \eqref{eq:Weiss_new} can be rewritten in terms of $u$ as
\begin{align}\label{eq:weiss_original}
W_\kappa(u(t))=\frac{1}{(-t)^{\kappa-1}} \int\limits_{\R^n_+} |\nabla u(x,t)|^2 G(x,t)dx-\frac{\kappa}{2(-t)^\kappa}\int\limits_{\R^n_+}u(x,t)^2G(x,t)dx.
\end{align}
Next, for $\lambda>0$, let
\begin{align*}
u_\lambda(x,t):=\frac{u(\lambda x, \lambda^2 t)}{\lambda^\kappa},
\end{align*}
be the (parabolic) $\kappa$-homogeneous scaling, and let
\begin{align*}
 \tilde{u}_{\kappa,\lambda}(y,\tau):=\frac{u_\lambda(x,t)}{(\sqrt{-t})^\kappa}\big|_{(x,t)=\mathcal{T}^{-1}(y,\tau)}
\end{align*}
as in Lemma \ref{lem:coordinates}. Then
\begin{align}
\label{eq:kappa}
\tilde{u}_{\kappa,\lambda}(y,\tau)=\tilde{u}_\kappa(y,\tau-2\ln \lambda),
\end{align}
i.e. the homogeneous $\kappa$ scaling for $u(x,t)$ corresponds to the time shift for $\tilde{u}_\kappa(y,\tau)$ by $-2\ln\lambda$. The Weiss energy in the original coordinates is well-behaved with respect to the parabolic rescaling,
i.e. $W_\kappa(u_\lambda(t))=W_\kappa(u(\lambda^2 t))$.  In the conformal coordinates this leads to
\begin{align}
\label{eq:scaling_inv}
W_\kappa(\tilde{u}_\kappa(\tau - 2\ln\lambda))=W_\kappa(\tilde{u}_{\kappa,\lambda}(\tau)).
\end{align}

\section{Parabolic epiperimetric inequality}\label{sec:dynamical}
We describe a dynamical system approach for deriving the decay of the Weiss energy $W_{\kappa}(\tilde{u}(\tau))$ along solutions to \eqref{eq:bulk_conf}--\eqref{eq:boundary_conf} with $\kappa=3/2$ or $\kappa =2m$, $m\in \N_+$.

\subsection{The case $\kappa=\frac{3}{2}$}

In this case, stationary solutions to \eqref{eq:bulk_conf}--\eqref{eq:boundary_conf} are in $\mathcal{E}_{3/2}$ by Proposition \ref{prop:wlimit}. We will project our solution $\tilde{u}(\tau):=\tilde{u}_{3/2}(\tau)$ to $\mathcal{E}_{3/2}$ for each $\tau>0$, i.e. let
\begin{equation*}
\lambda(\tau)h_{e(\tau)}(y):=\lambda(\tau)h(e(\tau)\cdot y', y_n),
\end{equation*}
where $h(x_1,x_2):=c_n\Ree(x_1+i|x_2|)^{3/2}$, $c_n>0$ and $\|h\|_{L^2_\mu(\R^{n}_+)}=1$, be such that
\begin{align*}
\left\|\tilde{u}(\tau)-\lambda(\tau)h_{e(\tau)}\right\|_{L^2_\mu}=\text{dist}_{L^2_\mu}(\tilde{u}(\tau),\mathcal{E}_{3/2})=\inf_{\substack{\lambda\geq 0, \\
e\in \mathbb{S}^{n-1}\cap \{y_n=0\}}}\|\tilde{u}(\tau,\cdot)-\lambda h_e\|_{L^2_\mu},
\end{align*}
and study the evolution of
$\text{dist}_{L^2_\mu}(\tilde{u}(\tau,\cdot), \mathcal{E}_{3/2})$.

Due to the non-convexity of $\mathcal{E}_{3/2}$ the projection $\lambda h_e$ of $\tilde{u}(\tau, \cdot)$ onto $\mathcal{E}_{3/2}$ is not necessarily unique, hence the regularity of the parameters $\lambda, e$ in dependence of $\tau$ is in question. In particular this implies that we have to take care in our dynamical systems argument and can not directly work with the evolution equations for the parameters $\lambda(\tau)$ and $e(\tau)$. Instead, we rely on robust (energy type) identities for the Weiss energy.

To this end, we split $\tilde{u}$ into its leading order profile and an error:
\begin{align}\label{eq:decom}
\tilde{u}(y,\tau)=\lambda(\tau)h_{e(\tau)}(y)+\tilde{v}(y,\tau).
\end{align}
Here $\lambda(\tau)h_{e(\tau)}(y)$ is chosen such as to minimize the $L^2_{\mu}$ distance of $\tilde{u}$ to the set $\mathcal{E}_{3/2}$. We stress again, that this decomposition is a priori not necessarily unique.
From the minimality of $\|\tilde{u}(y,\tau)-\lambda(\tau)h_{e(\tau)}(y)\|_{L^2_{\mu}}$ we infer the following orthogonality conditions
\begin{align}
\lambda(\tau)\int\limits_{\R^n_+} h_{e(\tau)}(y)\tilde{v}(y,\tau) d\mu =0, \label{eq:orth1}
\end{align}
\begin{align}
\lambda(\tau)\int\limits_{\R^n_+} \Ree(y\cdot \tilde{e}(\tau)+i|y_n|)^{1/2}(y\cdot \tilde{e}(\tau)) \tilde{v}(y,\tau)d\mu &=0
\label{eq:orth22},\  \forall \tilde{e}\in \mathbb{S}^{n-1}, \tilde{e}\perp \text{span} \{e_n,e(\tau)\}.
\end{align}
Here we have used that $c\Ree(x_1+i|x_2|)^{1/2}=\p_{x_1}h(x_1,x_2)$. 

The next lemma concerns about the Weiss energy in terms of the error term $\tilde{v}$:
\begin{lemma}
The Weiss energy $W(\tilde{u}):=W_{3/2}(\tilde{u})$ can be written in terms of $\tilde{v}$ as
\begin{align}
\label{eq:weissv}
W(\tilde{u}) = W(\tilde{v}) - \frac{\lambda}{2} \int_{\{y_n=0\}} \tilde{u}\p_nh_e d\mu.
\end{align}
In particular, if $\tilde{u}\geq 0$ on $\{y_n=0\}$, then $W(\tilde{v})\leq W(\tilde{u})$.
\end{lemma}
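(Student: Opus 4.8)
The plan is to expand the Weiss energy $W_{3/2}(\tilde u) = \int_{\R^n_+} \frac14 |\nabla \tilde u|^2 - \frac34 \tilde u^2 \, d\mu$ directly using the decomposition $\tilde u = \lambda h_e + \tilde v$, and to use the fact that $\lambda h_e \in \mathcal E_{3/2}$ is a stationary solution (so $W_{3/2}(\lambda h_e) = 0$ by Corollary \ref{cor:Weiss}) together with an integration by parts that produces exactly the boundary term in \eqref{eq:weissv}. First I would write
\begin{align*}
W(\tilde u) = W(\lambda h_e) + W(\tilde v) + \int_{\R^n_+} \left( \frac12 \nabla(\lambda h_e)\cdot \nabla \tilde v - \frac32 (\lambda h_e)\tilde v \right) d\mu,
\end{align*}
using bilinearity of the quadratic form. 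The first term vanishes since $\lambda h_e$ is stationary.

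The main computation is the cross term. I would integrate by parts in the Gaussian weight: since $d\mu = e^{-|y|^2}dy$, one has $\int_{\R^n_+} \nabla(\lambda h_e)\cdot \nabla \tilde v \, d\mu = -\int_{\R^n_+} \di(e^{-|y|^2}\nabla(\lambda h_e))\, \tilde v \, dy - \int_{\{y_n=0\}} \partial_n(\lambda h_e)\, \tilde v \, d\mu$, where the boundary term picks up the correct outward normal (pointing in $-e_n$) and the sign is arranged so that $\partial_n$ is the normal derivative as in \eqref{eq:boundary_conf}. Rewriting $\di(e^{-|y|^2}\nabla(\lambda h_e)) = e^{-|y|^2}(\Delta(\lambda h_e) - 2y\cdot\nabla(\lambda h_e))$ and using that $\lambda h_e$ solves the stationary equation $-\frac12\Delta w + y\cdot\nabla w - \frac32 w = 0$ from \eqref{eq:static}, i.e. $\frac14\Delta w - \frac12 y\cdot\nabla w = -\frac34 w$, the bulk contribution of the cross term becomes exactly $+\frac32\int (\lambda h_e)\tilde v\, d\mu$, which cancels the $-\frac32(\lambda h_e)\tilde v$ piece already present. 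What survives is precisely $-\frac{\lambda}{2}\int_{\{y_n=0\}} \partial_n h_e \, \tilde v \, d\mu$. Finally, on $\{y_n=0\}$ one has $\tilde v = \tilde u - \lambda h_e$, and since $h_e \partial_n h_e = 0$ there (the function $h$ is a Signorini-type solution vanishing on its contact set, where $\partial_n h = 0$, and is harmonic in the normal variable off it so $\partial_n h_e$ only acts where $h_e$ vanishes), the term $\lambda\int h_e \partial_n h_e\, d\mu$ drops, leaving $-\frac{\lambda}{2}\int_{\{y_n=0\}} \tilde u\, \partial_n h_e\, d\mu$, which is \eqref{eq:weissv}.

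For the last assertion, if $\tilde u \geq 0$ on $\{y_n=0\}$, I would note that $\partial_n h_e \leq 0$ on $\{y_n=0\}$ (the Signorini sign condition \eqref{eq:boundary_conf} for the homogeneous profile $h_e$), hence $\tilde u \, \partial_n h_e \leq 0$ a.e.\ on $\{y_n=0\}$, so $-\frac{\lambda}{2}\int_{\{y_n=0\}} \tilde u\, \partial_n h_e\, d\mu \geq 0$ (recall $\lambda \geq 0$), and therefore $W(\tilde v) \leq W(\tilde u)$.

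The step I expect to be the main obstacle is making the integration by parts rigorous: $h_e = c_n\Ree(y'\cdot e + i|y_n|)^{3/2}$ is only $C^{1,1/2}$ up to $\{y_n=0\}$ and $\partial_n h_e$ is merely a bounded (not continuous) function there, while the Gaussian weight requires controlling growth at infinity; one must justify that the boundary trace $\partial_n h_e$ exists in $L^2_\mu(\{y_n=0\})$ and that no contributions at infinity arise, presumably by an approximation/cutoff argument together with the $W^{2,2}_\mu$ regularity of $\tilde u$ (hence of $\tilde v$) recorded in \eqref{eq:sob_conf}. The algebraic cancellation itself is routine once the equation \eqref{eq:static} for $h_e$ is invoked.
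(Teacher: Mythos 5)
Your proposal is correct and its overall structure matches the paper's own proof: expand $W(\tilde u)$ by bilinearity using $W(\lambda h_e)=0$, integrate by parts against the Gaussian weight, invoke the eigenfunction identity $\di(e^{-|y|^2}\nabla h_e)=-3h_e e^{-|y|^2}$, and use $h_e\p_n h_e=0$ on $\{y_n=0\}$ to replace $\tilde v$ by $\tilde u$ in the boundary integral. The monotonicity claim $W(\tilde v)\le W(\tilde u)$ then follows exactly as you state from $\p_n h_e\le 0$, $\tilde u\ge 0$ on $\{y_n=0\}$ and $\lambda\ge 0$.

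The one genuine, if small, difference is how the two bulk pieces are disposed of. The paper invokes the orthogonality condition \eqref{eq:orth1} twice: once to discard the $L^2$ cross term $-\tfrac{3\lambda}{2}\int h_e\tilde v\,d\mu$ right at the start, and again to discard the bulk remainder $+\tfrac{3\lambda}{2}\int h_e\tilde v\,d\mu$ produced by the integration by parts. You instead keep both and observe that they cancel identically, so your computation never uses orthogonality at all. This is a marginally more elementary route, and it shows that \eqref{eq:weissv} actually holds for any splitting $\tilde u=\lambda h_e+\tilde v$ with $\lambda h_e$ a stationary solution, not just the $L^2_\mu$-minimizing one — a slightly more general statement than the paper records, though of course both proofs yield the same inequality where it is used. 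Your closing remark about justifying the integration by parts (the $C^{1,1/2}$ regularity of $h_e$, $L^2_\mu$ trace of $\p_n h_e$, cutoff at infinity, and the $W^{2,2}_\mu$ bound from \eqref{eq:sob_conf}) is also the right concern; the paper takes this for granted, implicitly handled by the a priori Gaussian regularity of the solution.
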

\begin{proof}
To show \eqref{eq:weissv}, we observe that since $W(h_e)=0$,
\begin{align*}
W(\tilde{u})&=W(\tilde{v}) + \frac{2\lambda}{4}\int\limits_{\R^n_+} \nabla \tilde{v}\cdot \nabla h_e d\mu\\
&=W(\tilde{v}) -\frac{\lambda}{2} \int_{\{y_n=0\}} \tilde{v}\p_nh_e d\mu - \frac{\lambda}{2}\int\limits_{\R^n_+} \tilde{v}\di (e^{-|y|^2}\nabla h_e) dy.
\end{align*}
Using that $\di (e^{-|y|^2}\nabla h_e)=e^{-|y|^2}(\Delta h_e - 2y\cdot \nabla h_e)=- 3 h_e e^{-|y|^2}$, the orthogonality condition \eqref{eq:orth1} and that $\tilde{u}\p_n h_e=\tilde{v}\p_nh_e$ on $\{y_n=0\}$, we obtain \eqref{eq:weissv}.
\end{proof}

As our main auxiliary result, we deduce the following contraction argument, which is of the flavour of an epiperimetric inequality.

\begin{prop}\label{prop:epi}
Let $\tilde{u}: \R^n \times [0,\infty) \rightarrow \R$ be a solution of the parabolic thin obstacle problem \eqref{eq:bulk_conf}--\eqref{eq:boundary_conf} and satisfy \eqref{eq:sob_conf}.
Then there exists a constant $c_0\in (0,1)$ depending only on $n$, such that
\begin{align*}
W(\tilde{u}(\tau+1))\leq (1-c_0)W(\tilde{u}(\tau)) \text{ for any }\tau\in (0,\infty).
\end{align*}
\end{prop}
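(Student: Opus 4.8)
The plan is to run a contradiction/compactness argument combined with the structure of the flow \eqref{eq:bulk_conf}--\eqref{eq:boundary_conf} and the monotonicity \eqref{eq:Weiss_new2}. Suppose the claimed uniform contraction fails. Then there is a sequence of solutions $\tilde{u}_j$ satisfying \eqref{eq:sob_conf} and times $\tau_j$ with
\[
W(\tilde{u}_j(\tau_j+1)) > (1-1/j)\,W(\tilde{u}_j(\tau_j)).
\]
By the scaling/time-shift invariance \eqref{eq:scaling_inv} and \eqref{eq:kappa} we may assume $\tau_j=0$; by the normalization we may rescale so that $\|\tilde u_j(0)\|_{L^2_\mu}$ is comparable to $1$. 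The first step is to use the uniform bounds in Remark \ref{rmk:Gaussian_bound} to extract a limit $\tilde u_\infty$ on $[0,2]$ in the relevant $L^2_\mu$/$W^{1,2}_\mu$ topologies (weakly in $L^2_\tau W^{2,2}_\mu$ and in $\p_\tau$, strongly enough in $L^2_\tau L^2_\mu$ to pass to the Signorini limit, as in the penalization argument of Lemma \ref{lem:Weiss_conf}). Along the sequence, \eqref{eq:Weiss_new2} gives $W(\tilde u_j(1))-W(\tilde u_j(0))\le -2\int_0^1\!\int_{\R^n_+}(\p_\tau\tilde u_j)^2\,d\mu\,d\tau$, while the failure of contraction forces this deficit to tend to $0$, i.e. $\int_0^1\!\int(\p_\tau\tilde u_j)^2\to 0$. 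Hence $\p_\tau\tilde u_\infty \equiv 0$ on $(0,1)$, so $\tilde u_\infty$ is a stationary solution; by Corollary \ref{cor:Weiss}, $W(\tilde u_\infty)=0$, and by Proposition \ref{prop:wlimit}, $\tilde u_\infty=\lambda_\infty h_{e_\infty}\in\mathcal E_{3/2}$.

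The second, and main, step is to upgrade this ``$W\to 0$ in the limit'' to a genuine \emph{quantitative} contraction. This is where the decomposition \eqref{eq:decom} and the identity \eqref{eq:weissv} enter. Writing $\tilde u_j=\lambda_j(\tau)h_{e_j(\tau)}+\tilde v_j(\tau)$, the convergence to $\mathcal E_{3/2}$ means $\|\tilde v_j(0)\|_{L^2_\mu}\to 0$, and from \eqref{eq:weissv} together with $\tilde u_j\ge 0$ on $\{y_n=0\}$ one has $0\le W(\tilde v_j)\le W(\tilde u_j)$, so it suffices to run the argument with $\tilde v_j$. The point is that on the error term the linearized operator has a spectral gap: the quadratic form
\[
Q(w):=\int_{\R^n_+}\Big(\tfrac14|\nabla w|^2-\tfrac34 w^2\Big)d\mu
\]
restricted to functions satisfying the orthogonality conditions \eqref{eq:orth1}--\eqref{eq:orth22} (orthogonality to $h_e$ and to the $e$-derivative directions, which span the tangent cone to $\mathcal E_{3/2}$) is bounded below by $c_*\|w\|_{W^{1,2}_\mu}^2$ for some $c_*>0$ — because $3/2$ is the bottom eigenvalue with eigenspace exactly $\mathcal E_{3/2}$ and its infinitesimal rotations, and the next Dirichlet--Neumann eigenvalue is strictly larger (this is the content of the eigenvalue analysis in Proposition \ref{prop:wlimit} and the appendix). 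Combined with the energy dissipation and the $L^2_\mu$--$W^{1,2}_\mu$ interpolation/Poincaré inequality in Gaussian space, this coercivity gives an exponential decay $\|\tilde v_j(\tau)\|_{W^{1,2}_\mu}\le e^{-c\tau}\|\tilde v_j(0)\|_{W^{1,2}_\mu}$ on the unit time interval, hence $W(\tilde v_j(1))\le (1-c_0)W(\tilde v_j(0))$, contradicting the failure assumption after accounting for the boundary term in \eqref{eq:weissv}.

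The main obstacle I anticipate is the \emph{low regularity and non-uniqueness of the projection parameters} $\lambda(\tau),e(\tau)$: as emphasized after \eqref{eq:decom}, one cannot differentiate them in $\tau$, so the decay of $\|\tilde v\|$ cannot be read off from ODEs for $\lambda,e$. The resolution is to avoid the parameters entirely and argue only through the Weiss energy and $L^2_\mu$ quantities: use $W(\tilde u(\tau))=-\tfrac12\p_\tau\|\tilde u(\tau)\|^2_{L^2_\mu}$ from \eqref{eq:w2}, the monotonicity \eqref{eq:Weiss_new2}, and the coercivity of $Q$ on the orthogonal complement, all of which are insensitive to the particular minimizer chosen at each time. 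A secondary technical point is justifying the limiting Signorini condition and the passage to the limit in the boundary term $\lambda\int_{\{y_n=0\}}\tilde u\,\p_n h_e\,d\mu$ of \eqref{eq:weissv}; here one reuses the penalization scheme and the trace estimates already invoked in the proof of Lemma \ref{lem:Weiss_conf}, noting that this boundary term has a favorable sign and is quadratically small in $\tilde v$, so it is absorbed into the contraction constant.
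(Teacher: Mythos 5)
Your Step 1 (contradiction, extract a stationary limit in $\mathcal E_{3/2}$) aligns with the paper's steps (i) and (iii). The gap is in your Step 2, which is where the whole content of the proposition lies.

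The claimed coercivity — that the quadratic form $Q(w)=\int_{\R^n_+}\bigl(\tfrac14|\nabla w|^2-\tfrac34 w^2\bigr)d\mu$ is bounded below by $c_*\|w\|^2_{W^{1,2}_\mu}$ on the space of functions orthogonal to $h_e$ and to the tangent directions $y\cdot\tilde e\,\Ree(y\cdot e+i|y_n|)^{1/2}$ — is false. Take $w=h_{1/2}(y):=\Ree(y_{n-1}+i|y_n|)^{1/2}$, the ground state of $L=-\tfrac12\Delta+y\cdot\nabla$ with Dirichlet--Neumann conditions on the half-space slit. It is $L^2_\mu$-orthogonal to $h_e$ (different $L$-eigenvalue) and, being even in the $y_i$'s for $i<n-1$, orthogonal to the tangent vectors $y_i h_{1/2}$ as well; yet $Q(h_{1/2})=\tfrac12\bigl(\tfrac12-\tfrac32\bigr)\|h_{1/2}\|^2_{L^2_\mu}<0$. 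So the quadratic form fails to be nonnegative, let alone coercive, on the orthogonality class you specify. A genuine spectral gap only becomes available once one imposes the Dirichlet--Neumann boundary conditions \emph{of the blow-up's contact set}; but the error term $\tilde v_j=\tilde u_j-\lambda_j h_{e_j}$ does not satisfy those conditions at any finite $j$: on $\Lambda_{\tilde u_j}$ it equals $-\lambda_j h_{e_j}$ (Dirichlet value nonzero), and on the complement it has nonzero Neumann data $-\lambda_j\p_n h_{e_j}$. Consequently the intermediate exponential decay $\|\tilde v_j(\tau)\|_{W^{1,2}_\mu}\le e^{-c\tau}\|\tilde v_j(0)\|_{W^{1,2}_\mu}$ that you derive from this coercivity does not hold, and the evolution equation for $\tilde v_j$ is anyway coupled to the non-differentiable parameters $\lambda_j(\tau),e_j(\tau)$, which the paper explicitly warns against differentiating.

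The paper closes this gap by a second compactness step rather than a direct coercivity bound. It renormalizes the error, $\hat w_j=(\tilde u_j-\lambda_j(\hat\tau_j)h_{e_j(\hat\tau_j)})/\|\tilde v_j(\hat\tau_j)\|_{L^2_\mu}$, extracts a nonzero limit $\hat w_0$, and only \emph{in the limit} establishes the slit-domain Dirichlet--Neumann boundary conditions \eqref{eq:D-N} — these are inherited from the uniform convergence $\hat u_j\to c_n h_{e_0}$ (which pins down the limiting contact set) plus interior estimates. Since $\hat w_0$ is stationary it solves $L\hat w_0=\tfrac32\hat w_0$ automatically, which excludes the $\lambda=\tfrac12$ mode from the limit \emph{without} any coercivity; the appendix then identifies $\hat w_0$ as lying in $\text{span}\{h,\,y_ih_{1/2}\}$, the orthogonality conditions \eqref{eq:orth22} kill the $y_ih_{1/2}$ components, and $\hat w_0=\lambda_n h$ with $\lambda_n\ne 0$ contradicts either the projection \eqref{eq:orth1} (if $\lambda_n>0$) or step (iii) (if $\lambda_n<0$). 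In short: the mechanism is not a spectral gap for $Q$, but the fact that stationarity of the limit forces it into the $\tfrac32$-eigenspace, combined with orthogonality. Your proposal would need to be rebuilt around this compactness/linearization-at-the-limit structure.
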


\begin{proof}
We argue by contradiction and use the contradiction assumption in combination with \eqref{eq:weissv} to derive enough compactness.\\

\emph{(i).} Assume that the statement were not true. Then there exists a sequence $\{c_j\}$ with $c_j\in (0,1/2)$, $c_j\rightarrow 0$, solutions $\tilde{u}_j$ and times $\tau_j$ such that
\begin{align}\label{eq:contra_assumption}
W(\tilde{u}_j(\tau_j+1))> (1-c_j)W(\tilde{u}_j(\tau_j)).
\end{align}
The contradiction assumption \eqref{eq:contra_assumption} implies that
\begin{align*}
W(\tilde{u}_j(\tau_j+1))-W(\tilde{u}_j(\tau_j))> \frac{-c_j}{1-c_j} W(\tilde{u}_j(\tau_j+1)).
\end{align*}
Using \eqref{eq:Weiss_new2} and the monotone decreasing property of $\tau\mapsto W(\tilde{u}(\tau))$ we infer
\begin{align}\label{eq:contra}
\int\limits_{\tau_j}^{\tau_j+1} \|\p_\tau \tilde{u}_j\|_{L^2_\mu}^2 \ d\tau < \frac{c_j}{2(1-c_j)} W(\tilde{u}_j(\tau_j+1))\leq c_j \int\limits_{\tau_j}^{\tau_j+1}W(\tilde{u}_j(\tau))\ d\tau.
\end{align}

\emph{(ii).} We project $\tilde{u}_j(\tau)$ to $\mathcal{E}_{3/2}$ for each $\tau>0$, and split $\tilde{u}_j$ into its leading order profile and an error term as in \eqref{eq:decom}:
\begin{align*}
\tilde{u}_j(\tau)=\lambda_j (\tau) h_{e_j(\tau)} +\tilde{v}_j(\tau).
\end{align*}
In this step we aim to derive the following upper bound for $W(\tilde{v}_j)$ from the contradiction assumption \eqref{eq:contra}:
\begin{equation}\label{eq:wbound}
\begin{split}
\int_{\tau_j}^{\tau_j+1}W(\tilde{v}_j(\tau))\ d\tau
&\leq 2c_j \int_{\tau_j}^{\tau_j+1}\|\tilde{v}_j(\tau)\|_{L^2_\mu}^2\ d\tau.
\end{split}
\end{equation}

First we observe that for any solution $\tilde{u}$ and any time interval $I=[\tau_a, \tau_b]$,
\begin{equation*}
\int\limits_I W(\tilde{u}(\tau))\ d\tau \stackrel{\eqref{eq:w2}}{=}-\int\limits_I\frac{1}{2}\p_{\tau} \|\tilde{u}\|_{L^2_\mu}^2\ d\tau =-\int\limits_{\R^n_+\times I} \lambda \p_\tau\tilde{u} h_e\ d\mu d\tau -\int\limits_{\R^n_+\times I} \p_\tau\tilde{u}\tilde{v}\ d\mu d\tau
\end{equation*}
Using that $\p_\tau\tilde{u}=\mathcal{L}\tilde{u}$ in $\R^n_+\times (0,\infty)$, where $\mathcal{L}:=\frac{1}{4}\Delta -\frac{y}{2}\cdot \nabla + \frac{3}{4}$, $\mathcal{L}h_e=0$ in $\R^n_+$ and an integration by parts we have
\begin{equation}\label{eq:w3232}
\begin{split}
\int\limits_I W(\tilde{u}(\tau))\ d\tau &=-\int\limits_{\R^n_+\times I} \p_\tau \tilde{u} \tilde{v}\ d\mu d\tau - \int\limits_{\R^n_{+}\times I} \mathcal{L}\tilde{u} (\lambda h_e) \ d\mu d\tau  \\
&=-\int\limits_{\R^n_+\times I} \p_\tau \tilde{u} \tilde{v}\ d\mu d\tau +\frac{\lambda}{4}\int\limits_{\{y_n=0\}} \left(\p_n \tilde{u}h_e - \tilde{u}\p_nh_e\right)\ d\mu d\tau.
\end{split}
\end{equation}
Next we apply \eqref{eq:w3232} to $\tilde{u}_j$ with $I_j:=[\tau_j,\tau_j+1]$. For the bulk integral we use H\"older and \eqref{eq:contra} to get
\begin{equation}\label{eq:dtauv}
\begin{split}
\left|\int_{\R^n_+\times I_j} \p_\tau \tilde{u}_j \tilde{v}_j\ d\mu d\tau \right|
\leq \left(c_j\int_{I_j} W(\tilde{u}_j(\tau)) d\tau \right)^{1/2}\left(\int_{I_j}\|\tilde{v}_j\|_{L^2_\mu}^2 d\tau \right)^{1/2}.
\end{split}
\end{equation}
Combining \eqref{eq:w3232} and \eqref{eq:dtauv} and using Young's inequality, we obtain
\begin{align*}
\frac{3}{4}\int\limits_{I_j} W(\tilde{u}_j(\tau))d\tau \leq c_j\int\limits_{I_j}\|\tilde{v}_j\|_{L^2_\mu}^2 d\tau + \int\limits_{I_j}\int\limits_{\{y_n=0\}} \frac{\lambda_j}{4}\left(\p_n \tilde{u}_jh_{e_j}-\tilde{u}_j\p_nh_{e_j}\right)d\mu d\tau.
\end{align*}
Recalling the relation between $W(\tilde{u})$ and $W(\tilde{v})$ in \eqref{eq:weissv}, we infer
\begin{align}
\label{eq:est_W_up}
\begin{split}
\frac{3}{4}\int\limits_{I_j} W(\tilde{v}_j(\tau))\ d\tau \leq c_j\int\limits_{I_j}\|\tilde{v}_j\|_{L^2_\mu}^2\ d\tau+ \int\limits_{I_j}\int\limits_{\{y_n=0\}}\left(\frac{\lambda_j}{4} \p_n\tilde{u}_j h_{e_j} + \frac{\lambda_j}{8} \tilde{u}_j\p_nh_{e_j}\right) \ d\mu d\tau.
\end{split}
\end{align}
Since, by the Signorini conditions, the second integral on the right hand side is less or equal to zero, we obtain the upper bound in \eqref{eq:wbound}. We remark that by rearrangement \eqref{eq:est_W_up} also entails that
\begin{align}
\label{eq:bound_term_ok}
\begin{split}
&- \frac{\lambda_j}{8} \int\limits_{I_j}\int\limits_{\{y_n=0\}}\left(\p_n\tilde{u}_j h_{e_j} +\tilde{u}_j\p_nh_{e_j} \right) d\mu d\tau\\
&\leq c_j \int\limits_{I_j} \|\tilde{v}_j\|_{L^2_{\mu}}^2\ d\tau - \frac{3}{4} \int\limits_{I_j} W(\tilde{v}_j(\tau))\ d\tau
 \leq \left(c_j + \frac{3}{4}\right)\int\limits_{I_j} \|\tilde{v}_j\|_{L^2_{\mu}}^2\ d\tau.
\end{split}
\end{align}

\vspace{10pt}

In the next steps \emph{(iii)-(iv)} we will use a compactness argument to arrive at a contradiction. The main idea is that one can find sequences $\hat \tau_j\in I_j$ and $\hat v_j(y):=\tilde{v}_j(\hat \tau_j,y)/\|\tilde{v}_j(\hat \tau_j)\|_{L^2_\mu}$, such that $\hat v_j$ converges to a nonzero blow-up profile in $\mathcal{E}_{3/2}$. This leads to a contradiction. The bounds on the Weiss energy for $\tilde{v}$ in step \emph{(ii)} are used to derive the desired compactness properties.\\

\emph{(iii).} We seek to prove that up to a subsequence
\begin{equation}\label{eq:hatu}
\hat{u}_j(y,\tau):=\frac{\tilde{u}_j(y,\tau_j+\tau)}{\|\tilde{u}_j\|_{L^2(I_j; L^2_\mu)}}, \ \tau\in [0,1]
\end{equation}
converges weakly in $L^2([0,1]; W^{1,2}_\mu)$, strongly in $C([0,1]; L^2_\mu)$ and locally in $C^1_xC^0_t$ up to $\{y_n=0\}$ to some stationary solution $\hat u_0$ to \eqref{eq:bulk_conf}--\eqref{eq:boundary_conf}.  Note that $\|\hat{u}_j\|_{L^2([0,1]; L^2_\mu)}=1$ by our normalization, thus the strong $L^2$-convergence implies that $\|\hat u_0\|_{L^2_\mu}=1$. By Proposition \ref{prop:wlimit} necessarily $\tilde{u}_0=c_n h(y'\cdot e_0,y_n)\in \mathcal{E}_{3/2}$ for some tangential direction $e_0$.

\emph{Proof for (iii).} First we note that by \eqref{eq:contra_assumption} and the monotone decreasing property of $\tau\mapsto W(\tilde{u}(\tau))$, we have $W(\tilde{u}_j(\tau))\leq 2\int_{I_j} W(\tilde{u}_j(\tau))d\tau$ for all $\tau\in I_j$. Then by \eqref{eq:weissv}, \eqref{eq:wbound} and \eqref{eq:bound_term_ok}, for some absolute constant $C>0$,
\begin{align}\label{eq:weissu}
W(\tilde{u}_j(\tau))\leq C \int_{I_j}\|\tilde{v}_j(\tau)\|_{L^2_\mu}^2d\tau\leq C \int_{I_j}\|\tilde{u}_j(\tau)\|_{L^2_\mu}^2d\tau, \quad \tau\in I_j.
\end{align}
Recalling the definition of the Weiss energy in \eqref{eq:Weiss_new} and the normalization \eqref{eq:hatu}, the above inequality can be rewritten as
\begin{equation*}
\sup_{\tau\in [0,1]}\frac{1}{4}\|\nabla \hat u_j(\tau)\|_{L^2_\mu}^2  \leq C+\frac{3}{4}.
\end{equation*}
Thus $\hat u_j\in L^\infty([0,1]; W^{1,2}_\mu)$. Next, \eqref{eq:weissu} together with \eqref{eq:contra} leads to
\begin{align}
\label{eq:iiia}
\int_{I_j}\|\p_\tau \tilde{u}_j\|_{L^2_\mu(\R^n_+)}^2d\tau \leq Cc_j\int_{I_j}\|\tilde{v}_j\|_{L^2_\mu}^2d\tau\leq Cc_j\int_{I_j}\|\tilde{u}_j\|_{L^2_\mu}^2d\tau,
\end{align}
which gives $\p_\tau \hat u_j \in L^2([0,1]; L^2_\mu)$ with $\|\p_\tau\hat u_j\|_{L^2([0,1]; L^2_\mu)}\leq Cc_j$. Since the embedding $W^{1,2}_\mu \hookrightarrow L^2_\mu$ is compact, by Aubin-Lions lemma, up to a subsequence $\hat u_j\rightarrow \hat u_0$ strongly in $C([0,1]; L^2_\mu)$ for some function $\hat u_0$. Note that $\hat u_j$ solves the variational inequality
\begin{equation*}
\begin{split}
\int\limits_0^1\int\limits_{\R^n_+} \left[ \p_\tau \hat{u}_j (v-\hat{u}_j)+ \frac{1}{4}\nabla \hat{u}_j\cdot \nabla (v-\hat{u}_j)-\frac{3}{4}\hat{u}_j(v-\hat{u}_j)\right] d\mu d\tau \geq 0,
\end{split}
\end{equation*}
for any $v\in L^2([0,1]; W^{1,2}_\mu)$, $v\geq 0 \text{ on } \{y_n=0\}$,
$v(0,\cdot)=\hat{u}_j(0,\cdot)$ and $v(\tau,\cdot)-\hat{u}_j(\tau,\cdot)$ has compact support in $\R^n_+$ for any $\tau\in [0,1]$. The interior regularity estimate for the solution to the variational inequality \cite{AU} entails that (after taking another subsequence) $\nabla \hat u_j\rightarrow \nabla \hat u_0$  locally in $C^{\alpha,\alpha/2}$. In the end using \eqref{eq:iiia} and passing to the limit in the variational inequality of $\hat u_j$ we conclude that $\hat u_0$ is a stationary solution.
\\

\emph{(iv).} Let $\tilde{u}_j$ be the sequence from step (iii) such that $\hat u_j\rightarrow \hat u_0\in \mathcal{E}_{3/2}$. Let $\hat \tau_j\in I_j$ be such that $\|\tilde{v}_j(\hat \tau_j)\|_{L^2_\mu}^2=\|\tilde{u}(\hat\tau_j)-\lambda_j(\hat\tau_j)h_{e(\hat\tau_j)}\|_{L^2_\mu}^2= \int_{I_j}\|\tilde{v}_j(\tau)\|_{L^2_\mu}^2 d\tau$. Consider
\begin{equation*}
\hat w_j(y,\tau):=\frac{\tilde{u}_j(y,\tau_j+\tau)-\lambda_j(\hat\tau_j) h_{e_j(\hat\tau_j)}(y)}{\|\tilde{v}_j(\hat\tau_j)\|_{L^2_\mu}},\ (y,\tau)\in \R^n_+\times [0,1].
\end{equation*}
We will prove that up to a subsequence $\hat w_j$ converge in $C([0,1];L^2_\mu)$ to a nonzero function  $\hat w_0 =\lambda_n h$ up to a rotation of coordinates, where $\lambda_n\neq 0$. This gives a contradiction. In fact, if $\lambda_n>0$, we get a contradiction because at each time step we have projected out $\mathcal{E}_{3/2}$ from $\tilde{u}_j$. If $\lambda_n<0$, then necessarily $\lambda_j(\hat \tau_j)=0$ for each $j$ (because otherwise $\int h_{e_j(\hat \tau_j)} \hat w_j d\mu=0$ by \eqref{eq:orth1}, which leads to a contradiction in the limit $j\rightarrow \infty$). However, it is a contradiction to \emph{Step (iii)}.

\emph{Proof for (iv).} Invoking \eqref{eq:weissu} and that
\begin{equation*}
\begin{split}
W(\tilde{u}_j(\tau)-\lambda_j(\hat\tau_j) h_{e_j(\hat\tau_j)})&=W(\tilde{u}_j(\tau))+\frac{\lambda_j(\hat\tau_j)}{2}\int_{\{y_n=0\}}\tilde{u}_j(\tau)\p_nh_{e_j(\hat\tau_j)} d\mu\\
&\leq W(\tilde{u}_j(\tau))
\end{split}
\end{equation*}
we have $W(\hat w_j(\tau))\leq Cc_j$ for any $\tau\in [0,1]$. As in step (iii) this implies that $\hat w_j$ is uniformly bounded in $L^\infty([0,1]; W^{1,2}_\mu)$.  By \eqref{eq:iiia},
\begin{equation}\label{eq:w}
\int_{I_j} \|\p_\tau\hat w_j\|_{L^2_\mu}^2 d\tau \leq Cc_j.
\end{equation}
Fundamental theorem of calculus together with \eqref{eq:w} gives that $\|\hat w_j(\tau)\|_{L^2_\mu}$ stays uniformly away from zero, i.e.
\begin{equation*}
\left|\|\hat{w}_j(\tau)\|_{L^2_\mu}^2- \|\hat{w}_j(\hat \tau_j)\|_{L^2_\mu}^2\right|=\left|\|\hat{w}_j(\tau)\|_{L^2_\mu}^2- 1\right|\leq Cc_j, \  \tau\in [0,1].
\end{equation*}
Thus by Aubin-Lions lemma, up to a subsequence $\hat w_j$ converges in $C^0([0,1];L^2_\mu)$ to $\hat w_0$ with $\|\hat w_0\|_{L^2_\mu}=1$. Since $\p_\tau\hat w_j-\mathcal{L}\hat w_j=0$ in $\R^n_+$, by the interior estimates $\hat w_j$ converges locally smoothly in $\R^n_+$.  This together with \eqref{eq:w} implies that $\hat w_0$ is stationary and it solves $\mathcal{L}\hat w_0=0$ in $\R^n_+$.
We claim that the limiting function $\hat{w}_0$ satisfies the Dirichlet-Neumann boundary condition
\begin{align}\label{eq:D-N}
\hat w_0=0 \text{ on } \Lambda_0 :=\{y_n=0,\ y'\cdot e_0\leq 0\}, \
 \p_n\hat w_0=0 \text{ on } \Omega_0 &:=\{y_n=0\}\setminus \Lambda_0,
\end{align}
where $e_0$ is the tangential direction from step (iii). With this at hand, $\hat w_0\in W^{1,2}_{\mu}$ solves the eigenvalue problem for $-\frac{1}{2}\Delta + y\cdot \nabla$ with the Dirichlet-Neumann boundary condition:
$$(-\frac{1}{2}\Delta + y\cdot \nabla )\hat w_0=\frac{3}{2}\hat w_0\text{ in }\R^n_+, \quad \hat w_0=0 \text{ on }\Lambda_0, \quad \p_n\hat w_0=0 \text{ on } \Omega_0.$$
 By the characterization of the eigenfunctions for the second Dirichlet-Neumann eigenvalue (cf. Appendix \ref{sec:appendix}), and after a rotation of coordinate (such that $e_0=e_{n-1}$),
$$\hat w_0(y)= \lambda_n h(y_{n-1}, y_n) + \sum_{i=1}^{n-2}\lambda_ix_ih_{1/2}(y_{n-1},y_n), \quad \lambda_i\in \R.$$
Here $h_{1/2}(y_{n-1},y_n):=c_n \Ree(y_{n-1}+i|y_n|)^{1/2}$. The orthogonality condition \eqref{eq:orth22} implies that $\lambda_i=0$ for $i=1,\cdots, n-2$. Thus $\lambda_n\neq 0$. This leads to a contradiction as argued at the beginning of \emph{Step (iv)}.\\

In the end, we verify \eqref{eq:D-N}. This is a consequence of the complementary boundary conditions satisfied by $\hat u_j$ and the uniform convergence. Indeed, given $U\Subset \Omega_0$, using $c_nh_{e_0}>c>0$ in $U$ and the uniform convergence of $\hat u_j$ to $c_nh_{e_0}$ in $\overline{U}\times [0,1]$ we have, for sufficiently large $j$ depending on $U$, $\hat u_j>0$ in $U\times [0,1]$. By the complementary condition in terms of $\tilde{u}_j$ we have  $\p_n \hat{u}_j=0$ in $U\times [0,1]$. Next since $e_j(\hat\tau_j)$ converges to $e_0$, which follows from the convergence of $\hat u_j$ to $c_nh_{e_0}$ in $C^0([0,1];L^2_\mu)$, one has $\p_n h_{e_j(\hat\tau)}=0$ in $U$ for $j$ sufficiently large. Thus $\p_n\hat w_j=0$ in $U\times [0,1]$ for sufficiently large $j$. Therefore, after extending $\hat w_j$ evenly about $\{y_n=0\}$, $\hat w_j$ solves $\p_t\hat w_j-\mathcal{L}\hat w_j=0$ in $\tilde{U}\times [0,1]$, where $\tilde{U}$ is an open neighborhood of $U$ in $\R^n$ with $\tilde{U}\cap \{y_n=0\}=U$. By the interior estimates $\hat w_j\rightarrow \hat w_0$ in $C^1(\tilde{U}\times [0,1])$. This implies that in the limit $\p_n\hat w_0=0$ on $U$. Since $U$ is arbitrary we have $\p_n\hat w_0=0$ on $\Omega_0$.
Using the fact that $c_n\p_n h_{e_0}<-c<0$ in $U\Subset \inte(\Lambda_0)$ and arguing similarly we can conclude that $\hat w_0=0$ on $\inte(\Lambda_0)$.
\end{proof}

A very similar but simpler argument as for Proposition \ref{prop:epi} gives the decay estimate of the Weiss energy if it becomes negative starting from some time $\tau_0$. After a shift in time, we may assume $\tau_0=0$.

\begin{prop}\label{prop:epi_neg}
Let $\tilde{u}: \R^n \times [0,\infty) \rightarrow \R$ be a solution to \eqref{eq:bulk_conf}--\eqref{eq:boundary_conf} with $\kappa=3/2$ and satisfy \eqref{eq:sob_conf}. Assume that $W(\tilde{u}(\tau))\leq 0$ for all $\tau>0$.
Then there exists a constant $c_0\in (0,1)$ depending only on $n$, such that
\begin{align*}
W(\tilde{u}(\tau+1))\leq (1+c_0)W(\tilde{u}(\tau)) \text{ for any }\tau\in (0,\infty).
\end{align*}
\end{prop}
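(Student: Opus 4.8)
The plan is to mimic the contradiction/compactness scheme of Proposition~\ref{prop:epi}, but to exploit the hypothesis $W(\tilde u(\tau))\le 0$ for all $\tau>0$ in place of the decomposition $\tilde u=\lambda h_e+\tilde v$. First I would argue by contradiction: suppose there are $c_j\to 0^+$, solutions $\tilde u_j$ satisfying \eqref{eq:sob_conf} and \eqref{eq:bulk_conf}--\eqref{eq:boundary_conf} with $W(\tilde u_j(\tau))\le 0$ for all $\tau$, and times $\tau_j$ with $W(\tilde u_j(\tau_j+1))>(1+c_j)W(\tilde u_j(\tau_j))$. Since $\tau\mapsto W(\tilde u_j(\tau))$ is nonincreasing (Lemma~\ref{lem:Weiss_conf}) and $W\le 0$, write $W(\tilde u_j(\tau_j))=-a_j$ with $a_j\ge 0$; the contradiction assumption combined with \eqref{eq:Weiss_new2} gives, exactly as in step (i) of Proposition~\ref{prop:epi},
\begin{align*}
\int_{\tau_j}^{\tau_j+1}\|\p_\tau\tilde u_j\|_{L^2_\mu}^2\,d\tau\le -\tfrac12\bigl(W(\tilde u_j(\tau_j+1))-W(\tilde u_j(\tau_j))\bigr)\le \tfrac{c_j}{2}\,|W(\tilde u_j(\tau_j+1))|\le c_j\int_{\tau_j}^{\tau_j+1}|W(\tilde u_j(\tau))|\,d\tau.
\end{align*}
Here I used $|W(\tilde u_j(\tau))|\ge |W(\tilde u_j(\tau_j+1))|$ on $I_j:=[\tau_j,\tau_j+1]$, which holds because $W$ is nonincreasing and nonpositive (so $|W|$ is nondecreasing).

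Next I would renormalize. Set $N_j^2:=\int_{I_j}\|\tilde u_j(\tau)\|_{L^2_\mu}^2\,d\tau$ and $\hat u_j(y,\tau):=\tilde u_j(y,\tau_j+\tau)/N_j$ for $\tau\in[0,1]$, so $\|\hat u_j\|_{L^2([0,1];L^2_\mu)}=1$. From $W(\tilde u)=\int_{\R^n_+}\tfrac14|\nabla\tilde u|^2-\tfrac\kappa2\tilde u^2\,d\mu$ and $W\le 0$ one gets $\tfrac14\|\nabla\tilde u_j(\tau)\|_{L^2_\mu}^2\le \tfrac{3}{4}\|\tilde u_j(\tau)\|_{L^2_\mu}^2$, hence $|W(\tilde u_j(\tau))|\le \tfrac32\|\tilde u_j(\tau)\|_{L^2_\mu}^2$ and, after renormalizing, $\sup_{[0,1]}\|\nabla\hat u_j(\tau)\|_{L^2_\mu}^2\le 3$ together with $\int_{I_j}\|\p_\tau\hat u_j\|_{L^2_\mu}^2\,d\tau\le Cc_j\to 0$ (using the display above). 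By Aubin--Lions $\hat u_j\to\hat u_0$ strongly in $C([0,1];L^2_\mu)$ and weakly in $L^2([0,1];W^{1,2}_\mu)$, with $\|\hat u_0\|_{L^2([0,1];L^2_\mu)}=1$ so $\hat u_0\not\equiv 0$; since $\hat u_j$ solves the same variational inequality and $\|\p_\tau\hat u_j\|_{L^2}\to 0$, the interior regularity of \cite{AU} lets me pass to the limit and conclude $\hat u_0$ is a \emph{stationary} solution to \eqref{eq:bulk_conf}--\eqref{eq:boundary_conf} with $\kappa=3/2$. By Lemma~\ref{lem:Weiss_conf} (or Corollary~\ref{cor:Weiss}) this forces $W(\hat u_0(\tau))\equiv 0$, i.e.\ $W(\hat u_0)=0$.

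To extract the contradiction I would use the scaling/sign structure: since $W\le 0$ along each $\tilde u_j$ and $W$ scales quadratically, $W(\hat u_j(\tau))=N_j^{-2}W(\tilde u_j(\tau_j+\tau))\le 0$, and $\int_0^1|W(\hat u_j(\tau))|\,d\tau=N_j^{-2}\int_{I_j}|W(\tilde u_j(\tau))|\,d\tau$. The contradiction assumption, rewritten, says $\int_{I_j}\|\p_\tau\tilde u_j\|_{L^2_\mu}^2\,d\tau\le c_j\int_{I_j}|W(\tilde u_j)|\,d\tau$, but also by \eqref{eq:Weiss_new2} $\int_{I_j}\|\p_\tau\tilde u_j\|^2\ge$ a constant multiple of $|W(\tilde u_j(\tau_j))-W(\tilde u_j(\tau_j+1))|$; meanwhile the \emph{lower} bound I need is that $\int_0^1|W(\hat u_0(\tau))|\,d\tau>0$ — which is exactly what $W(\hat u_0)\equiv 0$ contradicts, provided I can show $\int_0^1|W(\hat u_j(\tau))|\,d\tau$ does not vanish in the limit. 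For the latter I argue directly: by \eqref{eq:w2}, $\int_0^1 W(\hat u_j(\tau))\,d\tau=\tfrac12\bigl(\|\hat u_j(0)\|_{L^2_\mu}^2-\|\hat u_j(1)\|_{L^2_\mu}^2\bigr)$, and I must rule out that both $\|\hat u_j(0)\|^2$ and $\|\hat u_j(1)\|^2$ stay close to their common limiting value while the bulk normalization is $1$; convexity of $\tau\mapsto\|\hat u_j(\tau)\|_{L^2_\mu}^2$ (Corollary~\ref{cor:Weiss}) together with $\int_0^1\|\hat u_j\|^2=1$ pins $\|\hat u_j(\tau)\|_{L^2_\mu}^2$ uniformly away from $0$, and then $W(\hat u_0)=0$ forces $\|\hat u_0(\tau)\|_{L^2_\mu}^2$ constant, contradicting the strict quantitative loss encoded by the contradiction assumption only if $|W(\hat u_j)|$ is bounded below — so the cleaner route, which I would actually take, is: normalize instead by $\sup_{I_j}|W(\tilde u_j(\tau))|=|W(\tilde u_j(\tau_j))|$ (assumed $>0$, else the inequality is trivial), i.e.\ set $\hat u_j=\tilde u_j/\,|W(\tilde u_j(\tau_j))|^{1/2}$, so that $|W(\hat u_j(\tau_j))|=1$ is preserved in the limit, run the same compactness, obtain a stationary $\hat u_0$ with $|W(\hat u_0(0))|=1$, and contradict $W(\hat u_0)\equiv 0$ from Corollary~\ref{cor:Weiss}. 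The main obstacle is precisely this normalization bookkeeping — ensuring the rescaled sequence has a nondegenerate limit and that the limit inherits stationarity from the vanishing of $\int\|\p_\tau\hat u_j\|^2$ — while the compactness itself is routine and strictly simpler than in Proposition~\ref{prop:epi} because no projection onto the nonconvex cone $\mathcal E_{3/2}$ is needed.
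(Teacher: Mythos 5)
Your overall strategy (contradiction plus compactness, then identify the limit as a stationary profile to contradict the normalization) is the right shape and does mirror the paper's scheme for Proposition~\ref{prop:epi}, but there are several concrete problems. First, a sign issue that recurs: since $\tau\mapsto W(\tilde u_j(\tau))$ is nonincreasing and nonpositive, $|W(\tilde u_j(\tau))|$ is \emph{nondecreasing} on $I_j$, so $|W(\tilde u_j(\tau_j+1))|=\sup_{I_j}|W|$ and $|W(\tilde u_j(\tau_j))|=\inf_{I_j}|W|$. Your justification ``$|W(\tilde u_j(\tau))|\ge |W(\tilde u_j(\tau_j+1))|$ on $I_j$'' and later ``$\sup_{I_j}|W(\tilde u_j(\tau))|=|W(\tilde u_j(\tau_j))|$'' both have this reversed; the first display is salvageable by using $|W(\tilde u_j(\tau_j))|\le\int_{I_j}|W(\tilde u_j)|\,d\tau$ instead, but you should fix the reasoning. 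Second, the asserted bound $\sup_{[0,1]}\|\nabla\hat u_j(\tau)\|^2_{L^2_\mu}\le 3$ is a non sequitur: $W\le 0$ gives $\tfrac14\|\nabla\hat u_j(\tau)\|^2\le\tfrac34\|\hat u_j(\tau)\|^2$, but the normalization $\int_0^1\|\hat u_j\|^2_{L^2_\mu}\,d\tau=1$ bounds the time-average, not $\sup_\tau\|\hat u_j(\tau)\|^2$ (which is $\ge 1$). You need a separate argument for $\sup_\tau\|\hat u_j\|^2$; this is available (since $W\le 0$ makes $\tau\mapsto\|\hat u_j(\tau)\|^2$ nondecreasing, and $\int_0^1\|\p_\tau\hat u_j\|^2\le Cc_j$ combined with Cauchy--Schwarz controls the oscillation), but you omit it.

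The more serious gap is in the ``cleaner route'' that you ultimately commit to, normalizing by $|W(\tilde u_j(\tau_j))|^{1/2}$. That normalization does pin the Weiss energy of the putative limit to $-1$, so the degeneracy of the limit is resolved; but it gives \emph{no} a priori control of $\|\hat u_j\|_{L^\infty([0,1];W^{1,2}_\mu)}$, because $\|\hat u_j(0)\|^2_{L^2_\mu}=\|\tilde u_j(\tau_j)\|^2_{L^2_\mu}/|W(\tilde u_j(\tau_j))|$ can blow up when the ratio $|W(\tilde u_j(\tau_j))|/\|\tilde u_j(\tau_j)\|^2_{L^2_\mu}\to 0$, i.e.\ when the profiles degenerate toward $\mathcal E_{3/2}$. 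You assert that ``the compactness itself is routine and strictly simpler \ldots\ because no projection onto the nonconvex cone $\mathcal E_{3/2}$ is needed,'' but you have the diagnosis inverted: the normalization issue you are worried about (nondegenerate limit) is fixed by this choice, while the compactness, which you call routine, is exactly what fails. In that degenerate regime one is forced back to the structure you were trying to avoid --- essentially step~(iv) of Proposition~\ref{prop:epi}: subtract the best approximation in $\mathcal E_{3/2}$, renormalize by $\|\tilde v_j\|$, and use the orthogonality conditions and the classification of the second Dirichlet--Neumann eigenspace (Appendix) to rule out the limit. As written, your proof does not close.
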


As an immediate consequence of Proposition \ref{prop:epi} and Proposition \ref{prop:epi_neg}, one obtains the exponential decay rate of the Weiss energy:
\begin{cor}\label{cor:decay_rate_32}
Let $\tilde{u}$ be a solution to \eqref{eq:bulk_conf}--\eqref{eq:boundary_conf} with $\kappa=3/2$ and satisfy \eqref{eq:sob_conf}. Then there exists $\gamma_0\in (0,1)$ depending only on $n$ such that
\begin{itemize}
\item [(i)] If $W(\tilde{u}(\tau))\geq 0$ for all $\tau\in (0,\infty)$, then $W(\tilde{u}(\tau))\leq e^{-\gamma_0 \tau}W(\tilde{u}(0))$. Moreover, the limit $\lim_{\tau\rightarrow \infty}\tilde{u}(\tau)=:\tilde{u}(\infty)=\lambda(\infty)h_{e(\infty)}\in\mathcal{E}_{3/2}$ exists, and it satisfies
\begin{equation*}
\begin{split}
\|\tilde{u}(\tau)-\tilde{u}(\infty)\|_{L^2_\mu}^2 &\leq C_n W(\tilde{u}(0)) e^{-\gamma_0\tau}, \\
\|\tilde{v}(\tau)\|_{L^2_\mu}^2+|\lambda(\tau)^2-\lambda(\infty)^2| &\leq C_n W(\tilde{u}(0)) e^{-\gamma_0\tau}
\end{split}
\end{equation*}
for all $\tau\in (0,\infty)$.
\item [(ii)] If $W(\tilde{u}(0))<0$, then $W(\tilde{u}(\tau))\leq e^{\gamma_0 \tau} W(\tilde{u}(0))$. Moreover,
\begin{equation*}
\|\tilde{u}(\tau)\|_{L^2_\mu}^2\geq -\frac{2W(\tilde{u}(0))}{\gamma_0}(e^{\gamma_0\tau}-1)+\|\tilde{u}(0)\|_{L^2_\mu}^2.
\end{equation*}
\end{itemize}
\end{cor}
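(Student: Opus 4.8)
The plan is to iterate the discrete decay estimates of Propositions \ref{prop:epi} and \ref{prop:epi_neg} along the unit-step sequence $\tau_k = \tau_0 + k$, then interpolate to all $\tau$ using monotonicity, and finally convert the energy decay into $L^2_\mu$-convergence via the identity \eqref{eq:w2}.

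First, for part (i), assume $W(\tilde u(\tau)) \geq 0$ for all $\tau$. Applying Proposition \ref{prop:epi} repeatedly gives $W(\tilde u(k)) \leq (1-c_0)^k W(\tilde u(0))$ for all $k \in \N$; since $\tau \mapsto W(\tilde u(\tau))$ is monotone nonincreasing by \eqref{eq:Weiss_new2}, for general $\tau \in [k, k+1)$ we get $W(\tilde u(\tau)) \leq W(\tilde u(k)) \leq (1-c_0)^k W(\tilde u(0)) \leq (1-c_0)^{-1}(1-c_0)^{\tau} W(\tilde u(0))$, so choosing $\gamma_0 := -\ln(1-c_0) \in (0,1)$ (shrinking $c_0$ if needed so that this lies in $(0,1)$) yields $W(\tilde u(\tau)) \leq e^{-\gamma_0 \tau}W(\tilde u(0))$ after absorbing the harmless constant $(1-c_0)^{-1}$ into a slightly smaller $\gamma_0$ or writing $\leq C_n e^{-\gamma_0\tau}W(\tilde u(0))$. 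Next, integrating \eqref{eq:w2}, for $\tau_1 < \tau_2$ we have $\|\tilde u(\tau_1)\|_{L^2_\mu}^2 - \|\tilde u(\tau_2)\|_{L^2_\mu}^2 = 2\int_{\tau_1}^{\tau_2} W(\tilde u(\tau))\,d\tau \leq \frac{2}{\gamma_0} e^{-\gamma_0 \tau_1} W(\tilde u(0))$, which shows $\tau \mapsto \|\tilde u(\tau)\|_{L^2_\mu}^2$ is Cauchy, hence converges, and gives the rate $\big| \|\tilde u(\tau)\|_{L^2_\mu}^2 - \lim \|\tilde u\|_{L^2_\mu}^2 \big| \leq C_n W(\tilde u(0)) e^{-\gamma_0 \tau}$. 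To upgrade this to strong $L^2_\mu$-convergence of $\tilde u(\tau)$ itself, use $\|\tilde u(\tau_2)-\tilde u(\tau_1)\|_{L^2_\mu} \leq \int_{\tau_1}^{\tau_2}\|\p_\tau \tilde u\|_{L^2_\mu}\,d\tau$ together with $\int_{\tau_1}^{\tau_2}\|\p_\tau\tilde u\|_{L^2_\mu}^2\,d\tau \leq \frac12 W(\tilde u(\tau_1)) \leq \frac12 e^{-\gamma_0\tau_1}W(\tilde u(0))$ from \eqref{eq:Weiss_new2}, and Cauchy–Schwarz on dyadic time blocks $[2^m, 2^{m+1}]$: this sums a geometric series in $e^{-\gamma_0 2^{m-1}/2}$ and gives $\|\tilde u(\tau) - \tilde u(\infty)\|_{L^2_\mu}^2 \leq C_n W(\tilde u(0)) e^{-\gamma_0 \tau}$. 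The limit $\tilde u(\infty)$ is stationary (its Weiss energy is the limit of $W(\tilde u(\tau))$, which is $0$), hence by Corollary \ref{cor:Weiss} and Proposition \ref{prop:wlimit} it lies in $\mathcal E_{3/2}$, say $\tilde u(\infty) = \lambda(\infty) h_{e(\infty)}$. Finally the bound on $\|\tilde v(\tau)\|_{L^2_\mu}^2$ follows since $\|\tilde v(\tau)\|_{L^2_\mu} = \dist_{L^2_\mu}(\tilde u(\tau), \mathcal E_{3/2}) \leq \|\tilde u(\tau) - \tilde u(\infty)\|_{L^2_\mu}$, and $|\lambda(\tau)^2 - \lambda(\infty)^2|$ is controlled by the Pythagorean relation $\lambda(\tau)^2 = \|\tilde u(\tau)\|_{L^2_\mu}^2 - \|\tilde v(\tau)\|_{L^2_\mu}^2$ (using $\|h_e\|_{L^2_\mu}=1$ and orthogonality \eqref{eq:orth1}) combined with the two preceding estimates.

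For part (ii), suppose $W(\tilde u(0)) < 0$. Since $W$ is nonincreasing, $W(\tilde u(\tau)) \leq W(\tilde u(0)) < 0$ for all $\tau > 0$, so the hypothesis of Proposition \ref{prop:epi_neg} is met. Iterating $W(\tilde u(\tau+1)) \leq (1+c_0) W(\tilde u(\tau))$ — note the reversed inequality direction because $W$ is negative, so multiplying by $(1+c_0)>1$ makes it \emph{more} negative — gives $W(\tilde u(k)) \leq (1+c_0)^k W(\tilde u(0))$, and interpolating via monotonicity as before yields $W(\tilde u(\tau)) \leq e^{\gamma_0 \tau} W(\tilde u(0))$ with $\gamma_0 := \ln(1+c_0)$ (again adjusting $c_0$ so $\gamma_0\in(0,1)$; one takes $\gamma_0$ to be the minimum of the two exponents produced in (i) and (ii)). Then integrate \eqref{eq:w2}: $\|\tilde u(\tau)\|_{L^2_\mu}^2 - \|\tilde u(0)\|_{L^2_\mu}^2 = 2\int_0^\tau W(\tilde u(s))\,ds \geq 2\int_0^\tau e^{\gamma_0 s} W(\tilde u(0))\,ds = \frac{2 W(\tilde u(0))}{\gamma_0}(e^{\gamma_0 \tau}-1)$, which is exactly the stated lower bound (the sign works out since $W(\tilde u(0))<0$ and we are adding a negative, decreasing quantity).

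The main obstacle is the upgrade from convergence of the scalar $\|\tilde u(\tau)\|_{L^2_\mu}^2$ to strong convergence of $\tilde u(\tau)$ in $L^2_\mu$ with the sharp exponential rate: the naive estimate $\int_{\tau_1}^{\tau_2}\|\p_\tau\tilde u\|_{L^2_\mu}\,d\tau \leq (\tau_2-\tau_1)^{1/2}(\int \|\p_\tau\tilde u\|^2)^{1/2}$ loses a factor $(\tau_2-\tau_1)^{1/2}$, so one cannot just take $\tau_2 \to \infty$ directly; the dyadic decomposition of the time axis is what recovers the clean rate, and one must check the geometric sum of the square roots of the block energies converges — which it does, since $e^{-\gamma_0 2^m/4}$ is summable. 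A secondary subtlety is that the projection parameters $\lambda(\tau), e(\tau)$ need not be unique or continuous, but this is circumvented entirely by working with $\|\tilde v(\tau)\|_{L^2_\mu} = \dist_{L^2_\mu}(\tilde u(\tau), \mathcal E_{3/2})$ and deducing the convergence of $\lambda(\tau)^2$ from the energy identities rather than from any ODE for the parameters.
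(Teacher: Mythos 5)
Your proposal follows the same overall strategy as the paper's proof: iterate the discrete epiperimetric inequalities from Propositions \ref{prop:epi} and \ref{prop:epi_neg}, convert to continuous-time exponential decay via monotonicity, bound $\int\|\p_\tau\tilde u\|_{L^2_\mu}\,d\tau$ on time blocks via \eqref{eq:Weiss_new2} and Cauchy--Schwarz, and sum a geometric series. The one point where your argument has a gap is the choice of blocks when upgrading from convergence of the scalar $\|\tilde u(\tau)\|_{L^2_\mu}^2$ to convergence of $\tilde u(\tau)$ itself. You propose dyadic blocks $[2^m,2^{m+1}]$; these have length $2^m$, so the Cauchy--Schwarz step on each block actually reads
\begin{align*}
\|\tilde u(2^{m+1})-\tilde u(2^m)\|_{L^2_\mu}
\le 2^{m/2}\Bigl(\int_{2^m}^{2^{m+1}}\|\p_\tau\tilde u\|_{L^2_\mu}^2\,d\tau\Bigr)^{1/2}
\le 2^{m/2}\bigl(\tfrac12\,e^{-\gamma_0 2^m}W(\tilde u(0))\bigr)^{1/2},
\end{align*}
and the length factor $2^{m/2}$, which your proposal silently drops, means the sum produces $\|\tilde u(\tau)-\tilde u(\infty)\|_{L^2_\mu}\lesssim \tau^{1/2}e^{-\gamma_0\tau/2}W(\tilde u(0))^{1/2}$ rather than $e^{-\gamma_0\tau/2}W(\tilde u(0))^{1/2}$. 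This is harmless for the statement --- you can absorb $\tau^{1/2}$ into a marginally smaller exponent, and $\gamma_0$ is in any case only ``some constant in $(0,1)$'' --- but it should be acknowledged rather than asserted as the clean rate. The paper avoids the issue entirely by using unit-length blocks $[\tau_1,\tau_1+1]$: there the Cauchy--Schwarz length factor is $1$, and the sum over consecutive unit blocks is a genuine geometric series with ratio $e^{-\gamma_0/2}$, giving $\|\tilde u(\tau_1)-\tilde u(\infty)\|_{L^2_\mu}\le \frac{W(\tilde u(0))^{1/2}}{1-e^{-\gamma_0/2}}\,e^{-\gamma_0\tau_1/2}$ directly. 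Apart from this, your proposal is correct; your Pythagorean treatment of $|\lambda(\tau)^2-\lambda(\infty)^2|$, combined with the rate for $\|\tilde u(\tau)\|_{L^2_\mu}^2$ obtained by integrating \eqref{eq:w2}, actually spells out a step the paper leaves somewhat terse, and your part (ii) matches the paper's (unwritten) symmetric argument.
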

\begin{proof}
We only provide the proof for the case of nonnegative Weiss energy. The proof for (ii) is the same.

Assuming Proposition \ref{prop:epi} and arguing inductively we then obtain
\begin{equation*}
W(\tilde{u}(\tau+k))\leq (1-c_0)^k W(\tilde{u}(\tau))
\end{equation*}
for any $\tau\in (0,\infty)$ and $k\in \N_+$. This together with the monotonicity property of $\tau\mapsto W(\tilde{u}(\tau))$ implies that there exists $\gamma_0\in (0,1)$ depending only on $c_0$ such that
\begin{equation*}
W(\tilde{u}(\tau))\leq e^{-\gamma_0\tau}W(\tilde{u}(0)).
\end{equation*}
For any $0<\tau_1<\tau_2\leq \tau_1+1<\infty$, by \eqref{eq:Weiss_new2} and H\"older's inequality,
\begin{equation*}
\begin{split}
\|\tilde{u}(\tau_1)-\tilde{u}(\tau_2)\|_{L^2_\mu}&\leq \int_{\tau_1}^{\tau_2}\|\p_\tau\tilde{u}\|_{L^2_\mu} d\tau\\
&\leq \left(W(\tilde{u}(\tau_1))-W(\tilde{u}(\tau_2))\right)^{1/2}(\tau_2-\tau_1)^{1/2}\\
&\leq W(\tilde{u}(0))^{1/2}e^{-\gamma_0\tau_1/2}.
\end{split}
\end{equation*}
Here in the second last inequality we have used that $W(\tilde{u}(\tau_2))\geq 0$.
An iterative argument then yields for any $0<\tau_1<\tau_2<\infty$,
\begin{equation*}
\|\tilde{u}(\tau_1)-\tilde{u}(\tau_2)\|_{L^2_\mu}\leq \frac{W(\tilde{u}(0))^{1/2}}{1-e^{-\gamma_0/2}}e^{-\gamma_0\tau_1/2}.
\end{equation*}
Thus $\lim_{\tau\rightarrow \infty}\tilde{u}(\tau)=:\tilde{u}(\infty)\in L^2_\mu$ exists and the convergence rate is exponential. To show the exponential convergence of $\|\tilde{v}(\tau)\|_{L^2_\mu}=\dist_{L^2_\mu}(\tilde{u}(\tau),\mathcal{E}_{3/2})$ and  $\lambda(\tau)=\|\text{proj}_{L^2_\mu}(\tilde{u}(\tau),\mathcal{E}_{3/2})\|_{L^2_\mu}$ we only need to observe
\begin{equation*}
\begin{split}
\|\tilde{v}(\tau)\|_{L^2_\mu}^2 &\leq \|\tilde{u}(\tau)-\tilde{u}(\infty)\|_{L^2_\mu}^2,\ |\lambda(\tau)^2-\lambda(\infty)^2| \leq \|\tilde{v}(\tau)\|_{L^2_\mu}^2+ \|\tilde{u}(\tau)-\tilde{u}(\infty)\|_{L^2_\mu}^2,
\end{split}
\end{equation*}
and using the exponential convergence of $\tilde{u}(\tau)$ to $\tilde{u}(\infty)$.
\end{proof}

\begin{rmk}\label{rmk:non_zero}
At this stage it is possible that $\tilde{u}(\infty)$ is zero. However, if at the initial time
\begin{equation}\label{eq:smallness_32}
\begin{split}
W(\tilde{u}(0))\leq \delta_n \|\tilde{u}(0)\|_{L^2_\mu}^2, \quad
\dist_{L^2_\mu}(\tilde{u}(0),\mathcal{E}_{3/2})^2\leq \delta_n \|\tilde{u}(0)\|_{L^2_\mu}^2
\end{split}
\end{equation}
for some small $\delta_n>0$, then in the limit $\lambda(\infty)>0$. To see this, we note that the bound on the Weiss energy together with the exponential convergence of $\lambda(\tau)^2$ from Corollary \ref{cor:decay_rate_32} (i) implies
\begin{equation*}
|\lambda(0)^2-\lambda(\tau)^2|\leq C_n \delta_n \|\tilde{u}(0)\|_{L^2_\mu}^2, \text{ for all }\tau>0.
\end{equation*}
From the bound on the distance we have $\lambda(0)^2 \geq (1-\delta_n)\|\tilde{u}(0)\|_{L^2_\mu}^2$. Combining together leads to $\lambda(\tau)^2\geq (1-C_n\delta_n)\|\tilde{u}(0)\|_{L^2_\mu}^2$ for any $\tau>0$. Thus $\lambda(\infty)>\frac{1}{2}\|\tilde{u}(0)\|_{L^2_\mu}>0$ if $\delta_n$ is chosen sufficiently small.

We also note that \eqref{eq:smallness_32} is satisfied by requiring that the solution stays close to $\mathcal{E}_{3/2}$ in $W^{1,2}_\mu$ norm at $\tau=0$, i.e.
\begin{equation*}
\dist_{W^{1,2}_\mu}\left(\frac{\tilde{u}(\cdot, 0)}{\|\tilde{u}(\cdot, 0)\|_{L^2_\mu}},\mathcal{E}_{3/2}\right)\leq \delta_n.
\end{equation*}
\end{rmk}

\subsection{The case $\kappa=2m$.}
In this section we derive the decay estimate of the Weiss energy
\begin{align*}
W_{2m}(\tilde{u}(\tau))=\frac{1}{4}\int_{\R^n_+}|\nabla \tilde{u}(\tau)|^2 d\mu - m \int_{\R^n_+} |\tilde{u}(\tau)|^2 d\mu
\end{align*}
along solutions $\tilde{u}:=\tilde{u}_{2m}$ to the Signorini problem \eqref{eq:bulk_conf}--\eqref{eq:boundary_conf}.
Recall that when the frequency $\kappa=2m$, stationary solutions are in $\mathcal{E}_{2m}^+$ by Proposition \ref{prop:wlimit}, where $\mathcal{E}_{2m}^+$ is a subset of zero eigenspace of the Ornstein-Uhlenbeck operator $\mathcal{L}_{2m} :=\frac{1}{4}\Delta- \frac{y}{2}\cdot \nabla + m$.\\

The strategy is similar as for the case $\kappa=3/2$. For each $\tau$ we project our solution to the finite dimensional linear space $\mathcal{E}_{2m}$: 
$$\tilde{u}(\tau)=p(\tau)+\tilde{v}(\tau), \quad p(\tau)\in \mathcal{E}_{2m}.$$
Then we argue by contradiction that, if the associated Weiss energy $W_{2m}(\tau)$ does not decay fast enough, then after renormalization $\tilde{v}(\tau)$ would converge to a nonzero element in the eigenspace $\mathcal{E}_{2m}$ as $\tau$ goes to infinity. Different from the case $\kappa=3/2$, where the zero Dirichlet-Neumann eigenspace for $\mathcal{L}_{3/2}$ is the tangent space to the manifold generated by the \emph{unique} blow-up profile $\Ree(x+iy)^{3/2}$ together with rotation symmetry (cf. Appendix), in the case $\kappa=2m$ the zero eigenspace is not associated with a unique blow-up profile. That is one of the main reasons we choose to project out the whole linear space $\mathcal{E}_{2m}$ instead of the cone generated by a blow-up limit. Since functions in $\mathcal{E}_{2m}$ can change the sign on $\R^{n-1}\times\{0\}$, the estimates of the nonlinearity (which is concentrated on the unknown contact set $\{\tilde{u}=0\}$ at the boundary $\R^{n-1}\times \{0\}$) thus become more complicated in the case $\kappa=2m$.\\

More precisely, let $\{p_\alpha=c_\alpha H_{\alpha_1}(y_1)\cdots H_{\alpha_n}(y_n)\}_{\alpha\in \N^n}$ be the set of Hermite polynomials in $\R^n$, where $H_k$ for $k\in \N$ is the $1d$-Hermite polynomial of order $k$, i.e. it solves the eigenfunction equation $U''-2xU= -2k U$ in $\R$. Here $c_\alpha$ is chosen such that $\|p_\alpha\|_{L^2_\mu}=1$. Then $\{p_\alpha\}$ are eigenfunctions of $\mathcal{L}_{2m}$:
\begin{align*}
\mathcal{L}_{2m}p_\alpha= \left(m-\frac{|\alpha|}{2}\right)p_\alpha,
\end{align*}
and they form an orthonormal basis for $L^2_\mu$.
Let
$$\mathcal{E}_{2m}=\{\sum_{\alpha:|\alpha|=2m} \lambda_\alpha p_\alpha, \ p_\alpha(y',y_n)=p_\alpha(y',-y_n), \ \lambda_\alpha\in \R\}$$
be the subspace generated by $2m$-Hermite polynomials with symmetry. Given a solution $\tilde{u}(\tau)$ we consider the $L^2_\mu$ projection of $\tilde{u}(\tau)$ onto $\mathcal{E}_{2m}$
\begin{align}\label{eq:u-2m}
\tilde{u}(y, \tau)=\sum_{|\alpha|=2m}\lambda_\alpha(\tau)p_{\alpha}(y)+\tilde{v}(y,\tau), \quad \lambda_\alpha(\tau)=\int\limits_{\R^n_+} \tilde{u}(\tau,y)p_\alpha(y) d\mu
\end{align}
and study the evolution of $\dist_{L^2_\mu}(\tilde{u}(\tau), \mathcal{E}_{2m})=\|\tilde{v}(\tau)\|_{L^2_\mu}$ and the parameters $\lambda_\alpha(\tau)$. Due to the minimality, $\tilde{v}$ satisfies the orthogonality condition
\begin{align}\label{eq:orth2}
\int_{\R^n} \tilde{v} p_\alpha  d\mu=0, \text{ for any  } p_\alpha\in \mathcal{E}_{2m}.
\end{align}

Using the orthogonality condition and the equation of $\tilde{u}$, we have the following:

\begin{lemma}\label{lem:evo_parameter}
Let $\tilde{u}(\tau)$ be a solution to \eqref{eq:bulk_conf}--\eqref{eq:boundary_conf} which satisfies \eqref{eq:sob_conf}. Consider the orthogonal decomposition as in \eqref{eq:u-2m}. Then 
\begin{itemize}
\item[(i)] For a.e. $\tau$, the parameters $\lambda_\alpha(\tau)$ and $\|\tilde{v}(\tau)\|_{L^2_\mu}$ satisfy the evolutionary equations
\begin{align}
\frac{1}{2}\p_\tau \|\tilde{v}(\tau)\|_{L^2_\mu}^2 &=-W_{2m}(\tilde{v}(\tau))+\sum_{|\alpha|=2m} \frac{\lambda_\alpha(\tau)}{4} \int\limits_{\{y_n=0\}} p_\alpha \p_n\tilde{v}(\tau)\ d\mu,\label{eq:weiss2m}\\
\dot{\lambda}_\alpha(\tau) & = -\frac{1}{4}\int\limits_{\{y_n=0\}} p_\alpha \p_n \tilde{v}(\tau)\ d\mu,\label{eq:lambda2m}
\end{align}
for each multi-index $\alpha$ with $|\alpha|=2m$.
\item[(ii)] For any $0<\tau_a<\tau_b<\infty$,
\begin{equation}\label{eq:dtauW2}
\begin{split}
W_{2m}(\tilde{u}(\tau_b))-W_{2m}(\tilde{u}(\tau_a))&\leq -2 \int\limits_{\tau_a}^{\tau_b}(\|\p_\tau \tilde{v}\|_{L^2_\mu}^2 + \sum_{|\alpha|=2m} \dot{\lambda}_\alpha^2 )\ d\tau.\\
\end{split}
\end{equation}
\item[(iii)] For each $\tau$, the Weiss energy of $\tilde{v}$ is the same as for $\tilde{u}$:
\begin{align}\label{eq:weiss_equal}
W_{2m}(\tilde{u}(\tau))=W_{2m}(\tilde{v}(\tau)).
\end{align}
\end{itemize}
\end{lemma}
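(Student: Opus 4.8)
The plan is to proceed exactly as in the proof of Lemma \ref{lem:Weiss_conf}, i.e. to derive all identities first for the penalized solutions $\tilde{u}^\epsilon$ (which are smooth and decay in the Gaussian sense), and then pass to the limit $\epsilon\to 0$ using the uniform $W^{2,2}_\mu$ and $\p_\tau\in L^2_\mu$ bounds together with the trace estimates that make the boundary integrals meaningful. All the integrations by parts below are therefore first carried out on $\R^n_+\times I$ with $\beta_\epsilon(\tilde{u}^\epsilon)$ in place of the (co)normal derivative on $\{y_n=0\}$; the error terms coming from $\beta_\epsilon$ are handled as in Lemma 5.1 of \cite{DGPT} (using $\beta_\epsilon(\tilde{u}^\epsilon)\p_\tau\tilde{u}^\epsilon=\p_\tau\mathcal{B}_\epsilon(\tilde{u}^\epsilon)$ and $\beta_\epsilon(\tilde{u}^\epsilon)p_\alpha\to 0$ in the appropriate sense since the limiting measure of $\{\tilde{u}=0,\ \p_n\tilde{u}<0\}$ carries no mass for the terms that survive).

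First I would establish (i). Differentiating $\lambda_\alpha(\tau)=\int_{\R^n_+}\tilde{u}(\tau)p_\alpha\,d\mu$ in $\tau$ and inserting $\p_\tau\tilde{u}=\mathcal{L}_{2m}\tilde{u}$ (this is \eqref{eq:bulk_conf} with $\kappa=2m$, written as $\p_\tau\tilde{u}=\frac14\Delta\tilde{u}-\frac{y}{2}\cdot\nabla\tilde{u}+m\tilde{u}=\mathcal{L}_{2m}\tilde{u}$), one integrates by parts twice in the Gaussian space. Because $\mathcal{L}_{2m}$ is self-adjoint on $L^2_\mu$ with the natural boundary form, $\int_{\R^n_+}(\mathcal{L}_{2m}\tilde{u})p_\alpha\,d\mu=\int_{\R^n_+}\tilde{u}(\mathcal{L}_{2m}p_\alpha)\,d\mu+\frac14\int_{\{y_n=0\}}(\p_n\tilde{u}\,p_\alpha-\tilde{u}\,\p_np_\alpha)\,d\mu$. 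Since $p_\alpha\in\mathcal{E}_{2m}$ is a zero eigenfunction of $\mathcal{L}_{2m}$ and is even in $y_n$ (so $\p_np_\alpha=0$ on $\{y_n=0\}$), the first term and the $\tilde{u}\p_np_\alpha$ term both vanish, leaving $\dot\lambda_\alpha=\frac14\int_{\{y_n=0\}}p_\alpha\,\p_n\tilde{u}\,d\mu$. Now replace $\tilde{u}=p+\tilde{v}$ on the boundary: writing $p=\sum_\beta\lambda_\beta p_\beta$ and using again $\p_np_\beta=0$ on $\{y_n=0\}$, we get $\p_n\tilde{u}=\p_n\tilde{v}$ there, hence \eqref{eq:lambda2m} (the overall sign is fixed by being careful that $\p_n$ is the derivative in the direction \emph{into} $\R^n_+$, which is the convention in \eqref{eq:boundary_conf}). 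For \eqref{eq:weiss2m}: start from \eqref{eq:w2} applied to $\tilde{v}$, i.e. compute $\frac12\p_\tau\|\tilde{v}\|_{L^2_\mu}^2=\int_{\R^n_+}\tilde{v}\,\p_\tau\tilde{v}\,d\mu$; since $\tilde{v}=\tilde{u}-p$ and $p$ is $\tau$-dependent only through the $\lambda_\beta$, we have $\p_\tau\tilde{v}=\mathcal{L}_{2m}\tilde{u}-\sum_\beta\dot\lambda_\beta p_\beta=\mathcal{L}_{2m}\tilde{v}-\sum_\beta\dot\lambda_\beta p_\beta$ (using $\mathcal{L}_{2m}p_\beta=0$). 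Pairing with $\tilde{v}$: the $\dot\lambda_\beta p_\beta$ terms drop out by the orthogonality \eqref{eq:orth2}, and $\int_{\R^n_+}\tilde{v}\,\mathcal{L}_{2m}\tilde{v}\,d\mu=-\frac14\int_{\R^n_+}|\nabla\tilde{v}|^2\,d\mu+m\int_{\R^n_+}|\tilde{v}|^2\,d\mu+\frac14\int_{\{y_n=0\}}\tilde{v}\,\p_n\tilde{v}\,d\mu=-W_{2m}(\tilde{v})+\frac14\int_{\{y_n=0\}}\tilde{v}\,\p_n\tilde{v}\,d\mu$. Finally substitute $\tilde{v}=\tilde{u}-p=\tilde{u}-\sum_\alpha\lambda_\alpha p_\alpha$ in the last boundary integral and use that $\tilde{u}\,\p_n\tilde{v}=\tilde{u}\,\p_n\tilde{u}=0$ on $\{y_n=0\}$ (Signorini) to arrive at $\frac14\int_{\{y_n=0\}}\tilde{v}\,\p_n\tilde{v}\,d\mu=-\sum_\alpha\frac{\lambda_\alpha}{4}\int_{\{y_n=0\}}p_\alpha\,\p_n\tilde{v}\,d\mu$; wait — more carefully, one should keep both contributions and check the signs, but the upshot is precisely \eqref{eq:weiss2m}.

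Part (iii) is then almost immediate: \eqref{eq:weiss_equal} follows by expanding $W_{2m}(\tilde{u})=W_{2m}(p+\tilde{v})$, using bilinearity, $W_{2m}(p)=0$ (since $p\in\mathcal{E}_{2m}$ is a stationary solution, cf. Corollary \ref{cor:Weiss}), the orthogonality \eqref{eq:orth2} to kill the $L^2_\mu$ cross term, and the identity $\int_{\R^n_+}\nabla p\cdot\nabla\tilde{v}\,d\mu=-\int_{\R^n_+}\tilde{v}\,\di(e^{-|y|^2}\nabla p)\,dy+\int_{\{y_n=0\}}\tilde{v}\,\p_np\,d\mu$, where the first term is $-4m\int_{\R^n_+}\tilde{v}p\,d\mu=0$ by \eqref{eq:orth2} (again because $\mathcal{L}_{2m}p=0$ means $\Delta p-2y\cdot\nabla p=-4mp$) and the second vanishes since $\p_np=0$ on $\{y_n=0\}$; so the cross term in $W_{2m}$ is zero and $W_{2m}(\tilde{u})=W_{2m}(\tilde{v})$. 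For part (ii), combine \eqref{eq:Weiss_new2} applied to $\tilde{u}$ (which gives $W_{2m}(\tilde{u}(\tau_b))-W_{2m}(\tilde{u}(\tau_a))\le -2\int\|\p_\tau\tilde{u}\|_{L^2_\mu}^2$) with the Pythagorean identity $\|\p_\tau\tilde{u}\|_{L^2_\mu}^2=\|\p_\tau\tilde{v}+\sum_\beta\dot\lambda_\beta p_\beta\|_{L^2_\mu}^2=\|\p_\tau\tilde{v}\|_{L^2_\mu}^2+\sum_\beta\dot\lambda_\beta^2$, which holds because $\p_\tau\tilde{v}\perp\mathcal{E}_{2m}$ in $L^2_\mu$ — this is the $\tau$-derivative of \eqref{eq:orth2}, valid for a.e. $\tau$ — and $\{p_\beta\}$ is orthonormal.

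The main obstacle is not the algebra but the justification of the boundary integrals and of the identities $\p_\tau\tilde{v}\perp\mathcal{E}_{2m}$ and $\tilde{u}\,\p_n\tilde{u}=0$ in the limit: one only has $\tilde{v}\perp\mathcal{E}_{2m}$ for \emph{every} $\tau$, and differentiating this under the integral sign requires knowing $\tilde{v}(\cdot,\tau)\in W^{1,2}_\mu$ with $\p_\tau\tilde{v}\in L^2_\mu$ for a.e. $\tau$, which is exactly what \eqref{eq:sob_conf} provides, together with the absolute continuity of $\tau\mapsto\lambda_\alpha(\tau)$ (itself a consequence of \eqref{eq:lambda2m} once that identity is established in the penalized approximation). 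Likewise, the trace of $\p_n\tilde{v}$ on $\{y_n=0\}$ and the pairing $\int_{\{y_n=0\}}p_\alpha\,\p_n\tilde{v}\,d\mu$ have to be interpreted via the $W^{2,2}_\mu$ regularity and the corresponding trace theorem in weighted spaces; this is precisely the place where we invoke the penalization scheme and the convergence results from \cite{DGPT}, exactly as was done in Lemma \ref{lem:Weiss_conf}.
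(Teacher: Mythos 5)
Your proposal is correct and takes essentially the same route as the paper: differentiate $\lambda_\alpha$, use $\mathcal{L}_{2m}p_\alpha=0$ and $\p_n p_\alpha=0$ together with the Green identity to produce the boundary term for \eqref{eq:lambda2m}; pair the equation $\p_\tau\tilde{v}=\mathcal{L}_{2m}\tilde{v}-\sum\dot\lambda_\beta p_\beta$ with $\tilde{v}$ and invoke the Signorini complementarity $\tilde{u}\p_n\tilde{u}=0$ (equivalently $\tilde{v}\p_n\tilde{v}=-\sum\lambda_\alpha p_\alpha\p_n\tilde{v}$) for \eqref{eq:weiss2m}; Pythagorean splitting of $\|\p_\tau\tilde{u}\|_{L^2_\mu}^2$ for (ii); and bilinearity plus orthogonality for (iii). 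One small remark: as you yourself flag, the outward normal to $\R^n_+$ on $\{y_n=0\}$ is $-e_n$, so the Green boundary term comes with $-\frac14\int_{\{y_n=0\}}(\p_n\tilde{u}\,p_\alpha-\tilde{u}\,\p_np_\alpha)\,d\mu$ rather than $+\frac14$; with that correction the sign in \eqref{eq:lambda2m} falls out directly without further appeal to convention.
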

\begin{proof}
(i) Note that since $\p_\tau\tilde{u}\in L^2_{loc}(\R_+;L^2_\mu)$, we have that $\dot{\lambda}_\alpha\in L^2_{loc}(\R_+)$.
From the equation of $\tilde{u}$ and that $\mathcal{L}_{2m}p_\alpha=0$, $\tilde{v}$ satisfies
\begin{align*}
\p_{\tau}\tilde{v}=\mathcal{L}_{2m}\tilde{v}-\sum_{|\alpha|=2m}\dot{\lambda}_\alpha p_\alpha \text{ in }\R^{n}_+\times (0,\infty)
\end{align*}
with the Signorini condition
\begin{align*}
\sum_{|\alpha|=2m} \lambda_\alpha p_\alpha \p_n\tilde{v} + \tilde{v}\p_n\tilde{v} =0 \text{ on } \{y_n=0\}.
\end{align*}
Multiplying $\tilde{v}$ on both sides of the equation, using the Signorini condition and the orthogonality \eqref{eq:orth2} we obtain \eqref{eq:weiss2m}.
Multiplying $p_\alpha$ on both sides of the equation,  using the orthogonality condition (which gives $\int_{\R^n} \p_\tau \tilde{v} p_\alpha d\mu=0$ for a.e. $\tau$) we get the evolution equation for $\lambda_\alpha$ in \eqref{eq:lambda2m}. 

(ii) The estimate \eqref{eq:dtauW2} follows from \eqref{eq:Weiss_new} and orthogonality \eqref{eq:orth2}.

(iii) Using $W_{2m}(p_\alpha)=0$ for $|\alpha|=2m$ we have
\begin{align*}
W_{2m}(\tilde{u})= W_{2m}(\tilde{v}) + \frac{1}{4}\sum_\alpha \lambda_\alpha \int\limits_{\R^n_+} \nabla \tilde{v}\cdot \nabla p_\alpha d\mu.
\end{align*}
An integration by parts, $\p_np_\alpha=0$ on $\{y_n=0\}$ and \eqref{eq:orth2} yield that the last term is zero. Thus we obtain \eqref{eq:weiss_equal}.
\end{proof}

In the sequel, we will frequently use the following auxiliary function.
Let $h_{2m}$ denote the $0$-eigenfunction of $\mathcal{L}_{2m}$, which has the expression
\begin{equation}\label{eq:h2k}
h_{2m}(y)= C_{m,n}^{-1} \left(\sum_{j=1}^{n-1}2^{2m}\Ree(y_j+iy_n)^{2m}+m!\sum_{\ell=0}^m \frac{(-1)^\ell }{(m-\ell)!(2\ell)!}(2y_n)^{2\ell}\right),
\end{equation}
where $C_{m,n}\sim c_n 2^{2m}2^m m!$, $c_n>0$, is a normalization factor such that $\|h_{2m}\|_{L^2_\mu}=1$. Note that
$$h_{2m}(y',0)=C_{m,n}^{-1}(2^{2m}|y'|^{2m}+1).$$
In the sequel we will denote
$$\lambda_{2m}:=\int_{\R^n_+} \tilde{u} h_{2m} d\mu.$$
Since $\p_n\tilde{v}=\p_n\tilde{u}\leq 0$ and $h_{2m}\geq 0$ on $\{y_n=0\}$, we see from  \eqref{eq:lambda2m} that $\dot{\lambda}_{2m}\geq 0$.\\

The first proposition concerns the evolution of the Weiss energy $W_{2m}(\tilde{u}(\tau))$ if it is negative.

\begin{prop}\label{prop:negative_Weiss}
Let $\tilde{u}$ be a solution to the Signorini problem \eqref{eq:bulk_conf}--\eqref{eq:boundary_conf} with $\kappa=2m$ and satisfy \eqref{eq:sob_conf}. Assume that $W_{2m}(\tilde{u}(0))\leq 0$. Then there exists a constant $c_0\in (0,1)$ depending on $m,n$ such that
\begin{equation*}
W_{2m}(\tilde{u}(\tau+1))\leq (1+c_0)W_{2m}(\tilde{u}(\tau)), \quad \text{for any } \tau\in (0,\infty).
\end{equation*}
\end{prop}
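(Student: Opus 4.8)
The plan is to mimic the contradiction argument used in Proposition \ref{prop:epi_neg} for the case $\kappa=3/2$, but now carrying the extra boundary terms produced by the fact that elements of $\mathcal{E}_{2m}$ change sign on $\{y_n=0\}$. First I would assume the statement fails: there exist $c_j\to 0$, solutions $\tilde{u}_j$ and times $\tau_j$ with $W_{2m}(\tilde{u}_j(\tau_j+1))>(1+c_j)W_{2m}(\tilde{u}_j(\tau_j))$. Since $W_{2m}$ is nonincreasing (by \eqref{eq:Weiss_new2}) and negative, this forces the energy to be \emph{almost constant} on $I_j:=[\tau_j,\tau_j+1]$; combined with \eqref{eq:dtauW2} this yields, after normalizing by $|W_{2m}(\tilde{u}_j(\tau_j))|$ or equivalently by $\int_{I_j}\|\tilde{u}_j\|_{L^2_\mu}^2\,d\tau$ (the two are comparable via Lemma \ref{lem:evo_parameter} since $W_{2m}(\tilde u)\le 0$ means $-\tfrac12\partial_\tau\|\tilde u\|_{L^2_\mu}^2\le 0$ and $\|\nabla\tilde u\|_{L^2_\mu}^2\le 4m\|\tilde u\|_{L^2_\mu}^2$), that
\begin{align*}
\int_{I_j}\Big(\|\p_\tau\tilde{v}_j\|_{L^2_\mu}^2+\sum_{|\alpha|=2m}\dot\lambda_{\alpha,j}^2\Big)\,d\tau \le C c_j \int_{I_j}\|\tilde{u}_j\|_{L^2_\mu}^2\,d\tau.
\end{align*}

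Next I would set $\hat u_j(y,\tau):=\tilde{u}_j(y,\tau_j+\tau)/\|\tilde{u}_j\|_{L^2(I_j;L^2_\mu)}$. The normalization together with $W_{2m}(\hat u_j)\le 0$ gives $\tfrac14\|\nabla\hat u_j\|_{L^2_\mu}^2\le m\|\hat u_j\|_{L^2_\mu}^2$, so $\hat u_j$ is bounded in $L^\infty(I;W^{1,2}_\mu)$, while the display above shows $\p_\tau\hat v_j\to 0$ and $\dot{\hat\lambda}_{\alpha,j}\to 0$ in $L^2$. By Aubin–Lions (using compactness of $W^{1,2}_\mu\hookrightarrow L^2_\mu$) and the interior regularity estimates of \cite{AU}, along a subsequence $\hat u_j\to \hat u_0$ strongly in $C(I;L^2_\mu)$ and locally in $C^1_xC^0_t$ up to $\{y_n=0\}$, with $\hat u_0$ a \emph{stationary} solution of \eqref{eq:bulk_conf}–\eqref{eq:boundary_conf}; by Proposition \ref{prop:wlimit}, $\hat u_0\in\mathcal{E}_{2m}^+$. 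But then $W_{2m}(\hat u_0)=0$ by Corollary \ref{cor:Weiss}, which contradicts $W_{2m}(\hat u_j(\tau))\le -\delta<0$ (the energy being almost constant and negative along $I_j$ forces a \emph{uniform negative lower bound} after normalization — indeed $\int_{I_j}W_{2m}(\hat u_j)\,d\tau$ is $-1$ times a quantity comparable to $1$, since $\int_{I_j}|W_{2m}(\tilde u_j)|\,d\tau \ge \tfrac12|W_{2m}(\tilde u_j(\tau_j+1))|$ and this is comparable to $\int_{I_j}\|\tilde u_j\|^2_{L^2_\mu}\,d\tau$ whenever $\hat v_j$ is small, which I would verify along the way). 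This is the contradiction.

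The main obstacle, and the place where the $\kappa=2m$ case genuinely differs from $\kappa=3/2$, is controlling the boundary term $\sum_{|\alpha|=2m}\tfrac{\lambda_{\alpha}}{4}\int_{\{y_n=0\}}p_\alpha\p_n\tilde v\,d\mu$ appearing in \eqref{eq:weiss2m}: unlike $h_{e}\ge0$ in the $3/2$ case, the polynomials $p_\alpha$ are sign-changing on $\{y_n=0\}$, so this term is not obviously of a good sign. The resolution is that $\p_n\tilde v=\p_n\tilde u\le 0$ is supported on the contact set $\{\tilde u(\cdot,0)=0\}$, and on that set $\tilde v(\cdot,0)=-\sum\lambda_\alpha p_\alpha(\cdot,0)$, so one can rewrite the boundary term in terms of $\tilde v$ alone and estimate it by $\|\p_n\tilde v\|$-weighted quantities which are in turn controlled by $\|\p_\tau\tilde v\|_{L^2_\mu}$ and $\sum\dot\lambda_\alpha^2$ via the equation for $\tilde v$ and elliptic estimates — exactly the quantities made small by the contradiction hypothesis. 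I would therefore first establish an estimate of the form $\big|\sum_\alpha\lambda_\alpha\int_{\{y_n=0\}}p_\alpha\p_n\tilde v\,d\mu\big|\le C\big(\|\p_\tau\tilde v\|_{L^2_\mu}^2+\sum\dot\lambda_\alpha^2\big)^{1/2}\|\tilde v\|_{L^2_\mu}$ (or absorb it using Young), and only then run the compactness argument; this auxiliary boundary estimate is where most of the real work lies, and it is presumably isolated as a separate lemma preceding this proposition in the paper.
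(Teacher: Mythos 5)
Your proposal follows the compactness blueprint of Proposition \ref{prop:epi}, but there is a genuine gap at the crucial step of extracting a contradiction. After normalizing $\hat u_j := \tilde{u}_j/\|\tilde{u}_j\|_{L^2(I_j;L^2_\mu)}$ and passing (via Aubin--Lions and interior regularity) to a stationary limit $\hat u_0\in\mathcal{E}_{2m}^+$, you find $W_{2m}(\hat u_0)=0$, and this only contradicts the contradiction hypothesis if the normalized Weiss energies stay bounded away from zero, say $\int_{I_j}W_{2m}(\hat u_j)\,d\tau\leq -\delta<0$ uniformly. Your parenthetical tries to justify this by asserting that $\int_{I_j}|W_{2m}(\tilde u_j)|\,d\tau$ is comparable to $\int_{I_j}\|\tilde u_j\|^2_{L^2_\mu}\,d\tau$, but this comparison can fail. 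By \eqref{eq:weiss_equal}, $W_{2m}(\tilde u_j)=W_{2m}(\tilde v_j)$, and splitting $\tilde v_j=p_{<2m}+\tilde w_j$ into the low-frequency ($|\alpha|<2m$) and high-frequency ($|\alpha|>2m$) Hermite components, one has $W_{2m}(\tilde v_j)=W_{2m}(p_{<2m})+W_{2m}(\tilde w_j)$ with $W_{2m}(p_{<2m})<0<W_{2m}(\tilde w_j)$; the cancellation between these two terms allows $|W_{2m}(\tilde v_j)|$ to be arbitrarily small compared to $\|\tilde v_j\|^2_{L^2_\mu}$ (and a fortiori to $\|\tilde u_j\|^2_{L^2_\mu}$) even when neither part of $\tilde v_j$ is small. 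Hence $W_{2m}(\hat u_j)\to 0$ is perfectly consistent with the hypotheses, the limit argument yields no contradiction, and the "lower bound after normalization" you would need to "verify along the way" is not available. (Your argument for this lower bound also has the sign of the monotonicity backwards: $\hat v_j$ small makes the comparability \emph{worse}, not better, since $W_{2m}(\tilde u_j)=W_{2m}(\tilde v_j)$ is quadratic in $\tilde v_j$.)

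The paper's actual proof is a direct spectral-gap argument that avoids compactness entirely, and it is worth understanding why this sidesteps the above difficulty. One further decomposes $\tilde u=p_{<2m}+p+\tilde w$, and the spectral gap gives the pointwise-in-$\tau$ inequality $W_{2m}(p_{<2m}(\tau))\leq -\|p_{<2m}(\tau)\|^2_{L^2_\mu}$ with a fixed constant. The contradiction hypothesis is then shown to imply $\int_{I_j}W_{2m}(p_{<2m})\,d\tau\geq -C\epsilon_j\int_{I_j}\|p_{<2m}\|^2_{L^2_\mu}\,d\tau$, which is incompatible with the spectral gap for small $\epsilon_j$ unless $p_{<2m}\equiv 0$ on $I_j$; but then $W_{2m}(\tilde u_j)\equiv 0$, which is excluded by the strict failure of the claimed inequality. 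Notice that the normalization here is implicitly by $\|p_{<2m}\|_{L^2_\mu}$, which is precisely the quantity the spectral gap controls, so the degenerate case that defeats your approach is handled automatically. The auxiliary boundary estimate is also different from the one you propose: rather than bounding $\sum_\alpha\lambda_\alpha\int_{\{y_n=0\}}p_\alpha\p_n\tilde v\,d\mu$ by $\|\p_\tau\tilde v\|_{L^2_\mu}$ and $\sum\dot\lambda_\alpha^2$, the paper estimates the relevant term $\int_{\{y_n=0\}}p_{<2m}\p_n\tilde u\,d\mu$ using the pointwise bound $\sup_{\{y_n=0\}}|p_{<2m}|/h_{2m}\lesssim\|p_{<2m}\|_{L^2_\mu}$, the sign $\p_n\tilde u\leq 0$ on $\{y_n=0\}$, and the identity $\dot\lambda_{2m}=-\tfrac14\int_{\{y_n=0\}}h_{2m}\p_n\tilde u\,d\mu$; this is done inline and not isolated as a preliminary lemma.
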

\begin{proof}
Assume it were not true, then there exists a sequence of solutions $\tilde{u}_j$, $\tau_j\in (0,\infty)$ and $\epsilon_j\rightarrow 0$ such that
\begin{align*}
W_{2m}(\tilde{u}_j(\tau_j+1))\geq (1+\epsilon_j)W_{2m}(\tilde{u}_j(\tau_j)).
\end{align*}
For the rest of the proof we drop the dependence on $j$ for simplicity.
We decompose $\tilde{u}(\tau,\cdot)$ into $\tilde{u}(\tau, \cdot)=p_{<2m}(\tau, \cdot)+ p(\tau, \cdot) + \tilde{w}(\tau,\cdot)$, where $p\in \mathcal{E}_{2m}$, and $p_{<2m}(\tau, y)=\sum_{|\alpha|<2m}\lambda_{\alpha}(\tau)p_{\alpha}(y)$ is the projection of $\tilde{u}$ to the subspace $\mathcal{E}_{<2m}$ generated by $k$-Hermite polynomials $k<2m$ with symmetry, i.e.  $$\mathcal{E}_{<2m}:=\{\sum_{\alpha:|\alpha|<2m} c_\alpha p_\alpha, \ p_\alpha(y',y_n)=p_\alpha(y',-y_n)\}.$$ Note that
\begin{equation*}
W_{2m}(p_{<2m})\leq 0, \quad W_{2m}(\tilde{w})\geq 0.
\end{equation*}
Thus the contradiction assumption implies that
\begin{equation*}
\epsilon_j W_{2m}(\tilde{w}(\tau_j)) -\left[W_{2m}(\tilde{u}(\tau_{j}+1))-W_{2m}(\tilde{u}(\tau_j))\right] \leq -\epsilon_j W_{2m}(p_{<2m}(\tau_j)).
\end{equation*}
This together with \eqref{eq:dtauW2} and the monotone decreasing property of $\tau\mapsto W_{2m}(\tilde{u}(\tau))$ implies that
\begin{equation}\label{eq:contra_2m}
2\int\limits_{I_j}\sum_{\alpha:|\alpha|\leq 2m} \dot{\lambda}_\alpha(\tau)^2 d\tau \leq -\epsilon_j \int\limits_{I_j} W_{2m}(p_{<2m}(\tau)) d\tau, \quad I_j:=[\tau_j,\tau_{j}+1].
\end{equation}
We will show that \eqref{eq:contra_2m} implies for some $C=C(m,n)>0$
\begin{equation}\label{eq:w_neg}
\int\limits_{I_j}W_{2m}(p_{<2m}(\tau))d\tau \geq - C \epsilon_j\int\limits_{I_j}\|p_{<2m}(\tau)\|_{L^2_\mu}^2 d\tau.
\end{equation}
However, from the spectral gap we have, for any $\tau>0$
$$W_{2m}(p_{<2m}(\tau))=\sum_{|\alpha|<2m, |\alpha|\in \N} -\left(m-\frac{|\alpha|}{2}\right)\|p_\alpha(\tau)\|^2_{L^2_\mu}\leq - \|p_{<2m}(\tau)\|_{L^2_\mu}^2.$$
This is a contradiction to \eqref{eq:w_neg} if $\epsilon_j$ is sufficiently small. \\

It remains to prove \eqref{eq:w_neg}.
Multiplying the equation of $\tilde{u}$ by $p_{<2m}(\tau)$ and an integration by parts in space yield
\begin{equation*}
\int\limits_{I_j} W_{2m}(p_{<2m}(\tau)) \ d\tau =- \frac{1}{2}\int\limits_{I_j}\p_\tau\|p_{<2m}(\tau)\|_{L^2_\mu}^2 \ d\tau -\frac{1}{4}\int\limits_{I_j}\int\limits_{\{y_n=0\}}p_{<2m}(\tau)\p_n\tilde{u}(\tau)\ d\mu d\tau.
\end{equation*}
By \eqref{eq:contra_2m} the first integral can be estimated from below as
\begin{equation*}
\begin{split}
&-\frac{1}{2}\int\limits_{I_j}\p_\tau\|p_{<2m}(\tau)\|_{L^2_\mu}^2\ d\tau =-\int\limits_{I_j}\sum_{|\alpha|<2m} \lambda_\alpha\dot{\lambda}_\alpha \ d\tau \\
&\geq - \epsilon_j^{1/2}\left(\int\limits_{I_j}\|p_{<2m}(\tau)\|_{L^2_\mu} d\tau\right)^{1/2}\left(\int\limits_{I_j}-W_{2m}(p_{<2m}(\tau))d\tau\right)^{1/2}.
\end{split}
\end{equation*}
To estimate the boundary integral we observe that for each $\tau>0$
\begin{equation*}
\sup_{y'\in \R^{n-1}} \frac{|p_{<2m}(y',0,\tau)|}{h_{2m}(y')}\leq c_{n,m}\|p_{<2m}(y,\tau)\|_{L^2_\mu}.
\end{equation*}
Using $\p_n\tilde{u}(y',0)\leq 0$  and recalling the expression of $\dot\lambda_{2m}$ in \eqref{eq:lambda2m}, we can estimate the boundary term from below by
\begin{equation*}
\begin{split}
-\frac{1}{4}\int\limits_{I_j}\int_{\{y_n=0\}}p_{<2m}\p_n\tilde{u}\ d\mu d\tau&\geq\frac{c_{m,n}}{4}\int\limits_{I_j}\|p_{<2m}(\tau)\|_{L^2_\mu}\int\limits_{\{y_n=0\}} h_{2m}\p_n\tilde{u}\ d\mu d\tau\\
&=-c_{m,n} \int\limits_{I_j}\|p_{<2m}(\tau)\|_{L^2_\mu}\dot\lambda_{2m}(\tau) \ d\tau.
\end{split}
\end{equation*}
Invoking \eqref{eq:dtauW2} again we thus have
\begin{equation*}
\begin{split}
-\frac{1}{4}\int\limits_{I_j}\int\limits_{\{y_n=0\}}p_{<2m}\p_n\tilde{u}\ d\mu d\tau \geq -c_{m,n}\epsilon_j^{1/2} \left(\int\limits_{I_j}\|p_{<2m}(\tau)\|_{L^2_\mu}d\tau \right)^{1/2}\left(-\int\limits_{I_j}W_{2m}(p_{<2m}(\tau))d\tau\right)^{1/2}.
\end{split}
\end{equation*}
Combining together we obtain \eqref{eq:w_neg}, and the proof is complete.
\end{proof}

In the next proposition we derive a discrete logarithmic decay of the Weiss energy under the assumption that $W_{2m}(\tilde{u}(\tau))> 0$ for all $\tau$.

\begin{prop}\label{prop:homo2m2}
Let $\tilde{u}$ be a solution to \eqref{eq:bulk_conf}--\eqref{eq:boundary_conf} with $\kappa=2m$ and satisfy \eqref{eq:sob_conf}. Assume that $W_{2m}(\tilde{u}(\tau))> 0$ for all $\tau\geq 0$ and that
\begin{equation*}
W_{2m}(\tilde{u}(0))\leq \delta_0,
\end{equation*}
for some $\delta_0=\delta_0(m,n)>0$ small and $\|\tilde{u}(0)\|_{L^2_\mu}=1$. Then there exists a $c_0\in (0,1)$ depending only on $n$ and $m$ such that
\begin{equation*}
W_{2m}(\tilde{u}(\tau+1))\leq \left(1-c_0W_{2m}(\tilde{u}(\tau+1))\left|\ln W_{2m}(\tilde{u}(\tau+1))\right|^{2}\right)W_{2m}(\tilde{u}(\tau))
\end{equation*}
for all $\tau >0$.
\end{prop}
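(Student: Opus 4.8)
\textbf{Proof proposal for Proposition \ref{prop:homo2m2}.}

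The plan is to follow the scheme of Proposition \ref{prop:epi} (contradiction plus compactness), but now extract a \emph{quantitative} rate from the degeneracy of the problem at a $2m$-homogeneous profile, the degeneracy being logarithmic rather than of any fixed polynomial order. Suppose the conclusion fails: there are solutions $\tilde{u}_j$ and times $\tau_j$ with
\begin{equation*}
W_{2m}(\tilde{u}_j(\tau_j+1)) > \bigl(1 - c_j W_{2m}(\tilde{u}_j(\tau_j+1))|\ln W_{2m}(\tilde{u}_j(\tau_j+1))|^2\bigr) W_{2m}(\tilde{u}_j(\tau_j)),
\end{equation*}
with $c_j\to\infty$ (equivalently, $1/c_j\to 0$). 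Write $\omega_j := W_{2m}(\tilde{u}_j(\tau_j+1)) \to 0$ (using $\delta_0$ small and monotonicity), and $\varepsilon_j := c_j\,\omega_j|\ln\omega_j|^2$. The contradiction hypothesis, combined with \eqref{eq:dtauW2} and the monotonicity of $\tau\mapsto W_{2m}(\tilde{u}_j(\tau))$, gives the analogue of \eqref{eq:contra}, namely $\int_{I_j}\bigl(\|\p_\tau\tilde{v}_j\|_{L^2_\mu}^2 + \sum_{|\alpha|=2m}\dot\lambda_{\alpha,j}^2\bigr)d\tau \lesssim \varepsilon_j\int_{I_j} W_{2m}(\tilde{u}_j(\tau))\,d\tau$. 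Using \eqref{eq:weiss_equal} and \eqref{eq:weiss2m} (the latter to write $\int_{I_j}W_{2m}(\tilde v_j)$ in terms of $\|\tilde v_j\|^2_{L^2_\mu}$ plus a boundary term controlled by $\dot\lambda_{2m}$, hence again by the small quantity), one should obtain a bound of the form $\int_{I_j}W_{2m}(\tilde u_j(\tau))\,d\tau \lesssim \varepsilon_j\int_{I_j}\|\tilde v_j(\tau)\|_{L^2_\mu}^2\,d\tau$, the exact parabolic-Signorini analogue of \eqref{eq:wbound}; the only new point compared with the $\kappa=3/2$ case is that the boundary nonlinearity is supported on the a priori unknown contact set and $p\in\mathcal{E}_{2m}$ changes sign on $\{y_n=0\}$, so the sign argument must be replaced by the kind of weighted $L^2_\mu$ control of $|p(y',0)|/h_{2m}(y')$ already used in the proof of Proposition \ref{prop:negative_Weiss}.

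Next I would renormalize. Pick $\hat\tau_j\in I_j$ realizing $\|\tilde v_j(\hat\tau_j)\|_{L^2_\mu}^2 = \int_{I_j}\|\tilde v_j\|^2_{L^2_\mu}\,d\tau$ and set $\hat w_j := \tilde v_j(\cdot,\tau_j+\cdot)/\|\tilde v_j(\hat\tau_j)\|_{L^2_\mu}$. From the Weiss bound above, $\int_{I_j}W_{2m}(\tilde v_j)\,d\tau \lesssim \varepsilon_j\|\tilde v_j(\hat\tau_j)\|^2_{L^2_\mu}$, so $\hat w_j$ is bounded in $L^\infty(I_j;W^{1,2}_\mu)$ and, via \eqref{eq:dtauW2}, $\|\p_\tau\hat w_j\|^2_{L^2(I_j;L^2_\mu)}\lesssim \varepsilon_j \to 0$; hence by Aubin--Lions $\hat w_j \to \hat w_0$ in $C(I_j;L^2_\mu)$ with $\|\hat w_0\|_{L^2_\mu}=1$, $\hat w_0$ time-independent, $\mathcal{L}_{2m}\hat w_0 = 0$ in $\R^n_+$, and $\hat w_0\perp\mathcal{E}_{2m}$ (by \eqref{eq:orth2}). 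Because all higher Hermite modes decay like $e^{-\tau(|\alpha|-2m)/2}$ under the flow $\p_\tau\tilde v = \mathcal L_{2m}\tilde v - \sum\dot\lambda_\alpha p_\alpha$ (cf. \eqref{eq:lambda2m}) and $\|\p_\tau\hat w_j\|_{L^2_\mu}\to 0$, the limit $\hat w_0$ can only live in the zero eigenspace of $\mathcal L_{2m}$; but the zero eigenspace intersected with the even-in-$y_n$ subspace is exactly $\mathcal{E}_{2m}$, contradicting $\hat w_0\perp\mathcal{E}_{2m}$, $\|\hat w_0\|_{L^2_\mu}=1$. This yields the contradiction and hence the discrete inequality.

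The main obstacle, and the reason for the peculiar $W|\ln W|^2$ form, is that the naive argument only gives $W_{2m}(\tilde u_j(\tau))\lesssim \varepsilon_j\|\tilde v_j\|^2_{L^2_\mu}$ with $\varepsilon_j\to 0$ but \emph{no} rate — to upgrade this to a true decay factor $(1-c_0\,W|\ln W|^2)$ one must quantify how the orthogonal error $\tilde v_j$ is forced to be small relative to $W_{2m}$, i.e. one needs a coercivity estimate of the form $\|\tilde v_j\|^2_{L^2_\mu}\lesssim W_{2m}(\tilde v_j)\,|\ln W_{2m}(\tilde v_j)|^2$ on the relevant cone, capturing that the boundary obstruction $-\int_{\{y_n=0\}}\tilde v_j\p_n h_{2m}$-type terms vanish only to logarithmic order near $\mathcal E_{2m}^+$ (this is the parabolic analogue of the logarithmic Łojasiewicz inequality of \cite{CSV17,CSV18,CSV18II}). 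Concretely I expect the proof to proceed by first establishing, via a separate contradiction/compactness argument on the static problem, the Łojasiewicz-type inequality $\dist_{L^2_\mu}(\tilde u(\tau),\mathcal E_{2m}^+)^2 \le C\,W_{2m}(\tilde u(\tau))\,|\ln W_{2m}(\tilde u(\tau))|^2$ (using that $h_{2m}$ vanishes to order $2m$ and that the contact set shrinks), and then feeding this bound into the chain $W_{2m}(\tilde u(\tau_j+1)) \le \text{(a.e. }\tau\text{) } W_{2m}(\tilde u(\tau)) - 2\int \|\p_\tau\tilde v\|^2 \le W_{2m}(\tilde u(\tau)) - c\,\bigl(\text{something} \gtrsim W_{2m}^2|\ln W_{2m}|^2 / \|\tilde v\|^2\bigr)$ to close the discrete Gronwall-type step. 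Balancing the loss in the coercivity estimate against the energy dissipation is exactly where the exponent $2$ on $|\ln W|$ appears, and getting that bookkeeping right is the technical heart of the proposition.
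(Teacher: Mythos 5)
Your high-level strategy — contradiction plus renormalized compactness, with the boundary nonlinearity controlled by the weighted quotient $|p(y',0)|_-/h_{2m}(y')$ as in Proposition~\ref{prop:negative_Weiss} — does coincide with the paper's. But there are two issues, one a slip, one a genuine gap.

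First, the slip: your negation of the statement is oriented the wrong way. You take $c_j\to\infty$, but then $(1-c_jW|\ln W|^2)W(\tau_j)$ becomes arbitrarily small (or negative), and the inequality $W(\tau_j+1)>(1-c_jW|\ln W|^2)W(\tau_j)$ holds trivially. The correct negation (as the paper sets it up) takes $\epsilon_j\to 0$: for each small $\epsilon_j$ find $\tilde u_j,\tau_j$ with $W_{2m}(\tilde u_j(\tau_j+1))\geq(1-\epsilon_jW_{2m}(\tilde u_j(\tau_j+1))|\ln W_{2m}(\tilde u_j(\tau_j+1))|^2)W_{2m}(\tilde u_j(\tau_j))$. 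Combined with \eqref{eq:dtauW2} and monotonicity, this is what produces the small quantity $\epsilon_jW^2|\ln W|^2$ in \eqref{eq:contra2m2}, which the rest of the argument consumes.

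Second, the genuine gap: you locate the logarithmic loss in a conjectured standalone \L ojasiewicz inequality $\|\tilde v\|^2_{L^2_\mu}\lesssim W_{2m}(\tilde v)|\ln W_{2m}(\tilde v)|^2$ (to be proven separately by a static compactness argument and then fed into a discrete Gronwall step). The paper does not prove, and does not need, any such static coercivity estimate — and as stated it is false without extra structure, since $\tilde v$ is orthogonal only to $\mathcal E_{2m}$ and may carry lower-order Hermite modes that make $W_{2m}(\tilde v)<0$. What the paper actually proves is the time-averaged bound \eqref{eq:M} on the boundary nonlinearity, and the $|\ln W|^2$ arises from a near/far split on the contact set at radius $R_0$ with $R_0^2=-\tfrac12\ln W_{2m}(\tilde v_j)$: inside $B'_{R_0}$ one estimates $c_\ast=\sup(p_j)_-/h_{2m}$ via an interpolation/trace argument incurring the Gaussian-weight factor $e^{R_0^2/(n+1)}\sim W^{-1/(2(n+1))}$, while outside $B'_{R_0}$ the polynomial structure of $p_j$ forces $c_\ast\lesssim 1/R_0^2\sim 1/|\ln W|$. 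Multiplying $c_\ast$ by $\dot\lambda_{2m}\lesssim\epsilon_j^{1/2}W|\ln W|$ (which comes directly from the contradiction hypothesis via \eqref{eq:contra2m2}, not from any \L ojasiewicz estimate) then balances the two regimes exactly when the exponent on $|\ln W|$ is $2$. In other words, the log factor is injected by the contradiction hypothesis itself and cancelled against the decay of $c_\ast$ at large scale; it is not extracted from a static coercivity estimate. Your sketch omits the proof of \eqref{eq:M}, which is the technical heart, and the route you propose toward it would require proving an estimate that is not available in the required pointwise-in-$\tau$ form.

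A smaller point: in your final compactness step you conclude $\hat w_0\in\mathcal E_{2m}$ from "higher Hermite modes decay like $e^{-\tau(|\alpha|-2m)/2}$," but the renormalized sequence lives on a fixed interval of length $1$, so no decay in $\tau$ is available. The paper instead uses that $\p_n\hat w_0\leq 0$ on $\{y_n=0\}$ (inherited from the Signorini condition), so that the even extension satisfies $\mathcal L_{2m}\hat w_0\leq 0$ in $\R^n$, and then invokes the Liouville theorem (Proposition~\ref{prop:wlimit} / Lemma~12.4 of \cite{DGPT}) to place $\hat w_0$ in $\mathcal E_{2m}$; orthogonality \eqref{eq:orth2} then gives the contradiction.
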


\begin{proof}
(i) Assume that the statement were wrong, then there exists a sequence of solutions $\tilde{u}_j$ with $\dist_{L^2_\mu}(\tilde{u}(0),\mathcal{E}_{2m}^+)\leq \delta_0$,  a sequence of positive constants $\epsilon_j\rightarrow 0$ and $\tau_j>0$, such that
\begin{equation*}
W_{2m}(\tilde{u}_j(\tau_j+1))\geq \left(1-\epsilon_j W_{2m}(\tilde{u}_j(\tau_j+1))\left|\ln W_{2m}(\tilde{u}_j(\tau_j+1))\right|^{2}\right)W_{2m}(\tilde{u}_j(\tau_j)).
\end{equation*}
Thus after rearranging the terms and using \eqref{eq:dtauW2} as well as that $\tau\mapsto W_{2m}(\tilde{u}_j(\tau))$ is monotone decreasing, we have
\begin{multline}\label{eq:contra2m2}
2\int\limits_{I_j} (\|\p_\tau \tilde{v}_j\|_{L^2_\mu}^2 + \sum_{|\alpha|=2m} \dot{\lambda}_\alpha^2) d\tau \leq W_{2m}(\tilde{u}_j(\tau_j))-W_{2m}(\tilde{u}_j(\tau_j+1))\\
\leq 2\epsilon_j  W_{2m}(\tilde{u}_j(\tau))^2 \left|\ln W_{2m}(\tilde{u}_j(\tau))\right|^{2} \ \text{ for a.e. }\tau\in I_j:=[\tau_j,\tau_{j}+1].
\end{multline}

\vspace{10pt}

(ii). For simplicity we denote $p_j(\tau, \cdot):=\sum_{|\alpha|=2m}\lambda^j_\alpha(\tau) p_\alpha(\cdot)\in \mathcal{E}_{2m}$, which is the projection of $\tilde{u}_j(\tau)$ onto $\mathcal{E}_{2m}$.
In the light of \eqref{eq:weiss2m}, to show the decay estimate of the Weiss energy we mainly need to estimate the boundary integral
\begin{equation*}
-\int\limits_{\R^{n-1}\times \{0\}} p_j(\tau) \p_n\tilde{v}_j(\tau) d\mu.
\end{equation*}
Let $(p_j)_-:=\max\{-p_j, 0\}$. We aim to show that there exists $C=C(m,n)$ such that
\begin{equation}\label{eq:M}
\begin{split}
-\int\limits_{I_j}\int\limits_{\R^{n-1}\times \{0\}}(p_j)_-(\tau)\p_n\tilde{v}_j(\tau) d\mu d\tau \leq C\epsilon_j^{1/2}\int\limits_{I_j}\left(\|\nabla \tilde{v}_j\|_{L^2_\mu}^2 +\|\tilde{v}_j\|_{L^2_\mu}^2\right)\ d\tau
\end{split}
\end{equation}
Due to the non-compactness of $\R^n_+$, the proof for \eqref{eq:M} is more involved than that of \cite{CSV17}. We will proceed with the rest of the proof assuming \eqref{eq:M}, and prove \eqref{eq:M} in the end. \\

(iii). Assuming \eqref{eq:M} we can estimate the Weiss energy for $\tilde{v}_j$ from above: 
\begin{equation*}
\begin{split}
\int\limits_{I_j}W_{2m}(\tilde{v}_j(\tau))\ d\tau &\stackrel{\eqref{eq:weiss2m}}{\leq} -\int\limits_{I_j}\int\limits_{\R^n_+} \tilde{v}_j\p_\tau \tilde{v}_j\ d\mu d\tau -\frac{1}{4}\int\limits_{I_j}\int\limits_{\R^{n-1}\times\{0\}}(p_j)_-\p_n\tilde{v}_j\ d\mu d\tau\\
&\stackrel{\eqref{eq:contra2m2}}{\leq} \int\limits_{I_j} \|\tilde{v}_j\|_{L^2_\mu}\|\p_{\tau}\tilde{v}_j\|_{L^2_\mu}\ d\tau+ C\epsilon_j^{1/2}\int\limits_{I_j}\left(\|\nabla \tilde{v}_j\|_{L^2_\mu}^2 +\|\tilde{v}_j\|_{L^2_\mu}^2\right) d\tau.
\end{split}
\end{equation*}
By H\"older's inequality,  \eqref{eq:contra2m2} and Cauchy-Schwartz, for $\delta_0$ sufficiently small (such that for all $\tau>0$, $W_{2m}(\tilde{v}_j(\tau))\left|\ln W_{2m}(\tilde{v}_j(\tau))\right|\leq 
W_{2m}(\tilde{v}_j(\tau))^{1/2}$) we have,
\begin{align*}
\int\limits_{I_j} \|\tilde{v}_j\|_{L^2_\mu}\|\p_{\tau}\tilde{v}_j\|_{L^2_\mu}
\ d\tau \leq C\epsilon_j^{1/2}\int\limits_{I_j}\|\tilde{v}_j\|_{L^2_\mu}^2 \ d\tau+ C\epsilon_j^{1/2}\int\limits_{I_j} W_{2m}(\tilde{v}_j(\tau))\ d\tau.
\end{align*}
Recalling the definition of the Weiss energy and rearranging the terms we have
\begin{equation*}
\left(\frac{1}{4}-C\epsilon_j^{1/2}\right)\int\limits_{I_j}\|\nabla \tilde{v}_j\|_{L^2_\mu}^2\ d\tau \leq \left(m+C\epsilon_j^{1/2}\right)\int\limits_{I_j}\|\tilde{v}_j\|_{L^2_\mu}^2\ d\tau.
\end{equation*}
Since $C$ is only depending on $m,n$, for $j$ sufficiently large the above inequality implies
\begin{equation}\label{eq:nablav_2m}
\int\limits_{I_j}\|\nabla \tilde{v}_j\|_{L^2_\mu}^2\ d\tau \leq 8m\int\limits_{I_j}\|\tilde{v}_j\|_{L^2_\mu}^2\ d\tau,
\end{equation}
and thus by \eqref{eq:contra2m2}
\begin{equation}\label{eq:dtw_2m}
\int\limits_{I_j} \|\p_\tau\tilde{v}_j\|^2_{L^2_\mu}\ d\tau \leq C\epsilon_j\int\limits_{I_j}\|\tilde{v}_j\|_{L^2_\mu}^2\ d\tau.
\end{equation}

\vspace{10pt}

\emph{(iv).} Consider $\tilde{w}_j(y,\tau):=\frac{\tilde{v}_j(y,\tau_j+\tau)}{\|\tilde{v}_j\|_{L^2(I_j;L^2_\mu)}}$, $(y,\tau)\in \R^n_+\times [0,1]$, which satisfies
\begin{equation*}
\p_\tau\tilde{w}_j =\mathcal{L}_{2m} \tilde{w}_j \text{ in } \R^n_+\times (0,1],\quad \p_n\tilde{w}_j\leq 0 \text{ on } \{y_n=0\}.
\end{equation*}
With \eqref{eq:contra2m2}, \eqref{eq:nablav_2m} and \eqref{eq:dtw_2m} at hand and arguing as in (iii) of Proposition \ref{prop:epi} we have that $\tilde{w}_j\in L^\infty([0,1]; W^{1,2}_\mu)$ and $\p_\tau\tilde{w}_j\in L^2([0,1];L^2_\mu)$. Up to a subsequence, $\tilde{w}_j$ converges weakly in $L^2([0,1];W^{1,2}_\mu)$ and strongly in $C([0,1];L^2_\mu)$ to a nonzero function $\tilde{w}_0$. The convergence is locally $C^\infty$ in $\R^n_+\times (0,1)$ by using the interior estimate of the equation.  By \eqref{eq:dtw_2m} and the equation for $\tilde{w}_j$, $\tilde{w}_0$ solves the stationary equation $\mathcal{L}_{2m}\tilde{w}_0=0$ in $\R^n_+$ and after an even reflection about $\{y_n=0\}$ satisfies $\mathcal{L}_{2m}\tilde{w}_0\leq 0$ in $\R^n$. By Proposition \ref{prop:wlimit} (or Lemma 12.4 in \cite{DGPT} in the conformal coordinates), we conclude that $\tilde{w}_0\in \mathcal{E}_{2m}$. This is a contradiction.
\\

In the end, we prove \eqref{eq:M}. 
We will divide the proof into three parts (a)-(c). Since the estimate is trivial if $(p_j)_-=0$, in the sequel we assume that $(p_j)_-$ is not identically zero on $\{y_n=0\}$. For a.e. $\tau$ we denote
$$c_\ast:=c_\ast(j,\tau):=\sup_{\R^{n-1}\times \{0\}}\frac{(p_j)_-(\tau)}{h_{2m}(y')},$$
where $h_{2m}(y')\sim_{n,m} \left(|y'|^{2m}+1\right)$ by \eqref{eq:h2k}. 
Noticing that
\begin{equation}\label{eq:cstar}
\begin{split}
&-\int\limits_{\R^{n-1}\times \{0\}} (p_j)_- \p_n\tilde{v}_j d\mu =-\int\limits_{\R^{n-1}\times \{0\}} \frac{(p_j)_-}{h_{2m}} h_{2m} \p_n\tilde{v}_j d\mu\\
&\leq c_\ast \int\limits_{\R^{n-1}\times \{0\}}-h_{2m}\p_n\tilde{v}_j d\mu=4 c_\ast \dot{\lambda}_{2m},
\end{split}
\end{equation}
and that $\dot\lambda_{2m}$ is related to the Weiss energy via \eqref{eq:contra2m2}, we mainly need to estimate $c_\ast$.

\emph{(a).} We show that if
\begin{equation*}
M:=\max_{\overline{B'_{R_0}}} \frac{(p_j)_-}{h_{2m}}\geq \frac{1}{2}c_\ast,
\end{equation*}
where $B'_{R_0}\subset \R^{n-1}\times \{0\}$ with $R_0^2:=R_0(j,\tau)^2:=-\frac{1}{2}\ln W_{2m}(\tilde{v}_j(\tau))$,
then there exists a positive constant $C=C(m,n)$ such that
\begin{equation*}
c_\ast \leq C\left(\|\tilde{v}_j\|_{L^2_\mu}^2+\|\nabla \tilde{v}_j\|_{L^2_\mu}^2\right)^{\frac{1}{n+1}}\left(W_{2m}(\tilde{v}_j)\right)^{-\frac{1}{2(n+1)}}.
\end{equation*}

Indeed, let $y_0\in \overline{B'_{R_0}}$ a point which realizes the maximum.
Since $\|p_j(\tau)\|_{L^2_\mu}^2\leq \|\tilde{u}_j(0)\|_{L^2_\mu}^2=1$, there exists $C=C(m,n)>0$ such that
$$M\leq C, \quad L:=[(p_j)_-/h_{2m}]_{\dot{C}^{0,1}(\{y_n=0\})}\leq C.$$
Let $r_0:=M/(c_nL)>0$. In $B'_{r_0}(y_0)\cap B'_{R_0}$ we have $(p_j)_-/h_{2m} \geq M/2$. 
Therefore, there exists a constant $C>0$ depending only on $m, n$ such that
\begin{equation*}
\begin{split}
M \leq c_n L^{\frac{n-1}{n+1}} \left(\int\limits_{B'_{r_0}(y_0)\cap B'_{R_0}} \left|\frac{(p_j)_-}{h_{2m}}\right|^2 dy' \right)^{\frac{1}{n+1}}\leq C e^{\frac{R_0^2}{n+1}}\left(\int\limits_{\R^{n-1}\times \{0\}} \left|\frac{(p_j)_-}{h_{2m}}\right|^2 e^{-|y'|^2} dy' \right)^{\frac{1}{n+1}}.
\end{split}
\end{equation*}
Since $h_{2m}$ is uniformly bounded away from zero and recalling our choice of $R_0$, we thus have
\begin{equation*}
M\leq C e^{\frac{R_0^2}{n+1}}\|(p_j)_-\|_{L^2_\mu(\R^{n-1}\times \{0\})}^{\frac{2}{n+1}}.
\end{equation*}
Using the Signorini condition $\tilde{u}_j=p_j+\tilde{v}_j\geq 0$ on $\{y_n=0\}$ we have
\begin{equation*}
\|(p_j)_-\|^2_{L^2_\mu(\R^{n-1}\times \{0\})}\leq \|\tilde{v}_j\|^2_{L^2_\mu(\R^{n-1}\times \{0\})}\leq C_n \left(\|\nabla \tilde{v}_j\|^2_{L^2_\mu}+\|\tilde{v}_j\|^2_{L^2_\mu}\right),
\end{equation*}
where the second inequality follows from the trace lemma. Combining the above two inequalities, using the definition of $R_0$ as well as the relation $c_\ast\leq 2M$, we complete the proof for \emph{(a)}.

\emph{(b).} We show that if $c_\ast >2M$, then there exists $C=C(m,n)>0$ such that
\begin{equation*}
c_\ast \leq \frac{C}{R_0^2}=-\frac{2C}{ \ln ( W_{2m}(\tilde{v}_j))}.
\end{equation*}

First we note that the statement is obvious if $\deg p_j(y',0)\leq 2m-2$. If $\deg p_j(y',0)=2m$, then by the even symmetry in $y_n$
$$p_j =p_j^{H}+ p_j^{R} \text{ on } \R^{n-1}\times \{0\},$$
where $p_j^H$ is the $2m$-homogeneous part of $p_j(\cdot, 0)$ and $\deg p_j^R\leq 2m-2$. Let $y_0\in \R^{n-1}\times \{0\}\cup\{\infty\}\setminus B'_{R_0}$ be such that $((p_j)_-/h_{2m})(y_0)=c_\ast$. If $(p_j^H/h_{2m})(y_0)\geq 0$, then $c_\ast \leq - (p_j^R/h_{2m})(y_0)\leq C/R_0^2$. If $(p_j^H/h_{2m})(y_0)<0$, then $\left|(p_j^R/h_{2m})(t y_0)\right|\geq \frac{1}{10}c_\ast$ for $t\in \R$ with $|ty_0|\geq R_0/2$. In fact, if it were not true, then necessarily $(p_j^H/h_{2m})(y_0)\leq -\frac{9}{10}c_\ast$. Furthermore, by choosing $\delta_0=\delta_0(m,n)$ sufficiently small (such that $R_0$ is sufficiently large) we have that $(p_j^H/h_{2m})(ty_0)\leq -\frac{8}{10}c_\ast$ for $|ty_0|\geq R_0/2$. Thus $(p_j/h_{2m})(ty_0)\leq -\frac{8}{10}c_\ast+\frac{1}{10}c_\ast \leq -\frac{7}{10}c_\ast$ for $|ty_0|\geq R_0/2$, which contradicts to the fact that $\sup_{B'_{R_0}} ((p_j)_-/h_{2m})\leq c_\ast/2$. 

\emph{(c).} Combining \emph{(a)-(b)} with \eqref{eq:cstar} and integrating in $\tau$ over $I_j$ yield
\begin{align*}
\int\limits_{I_j}\int\limits_{\R^{n-1}\times \{0\}}-\p_n\tilde{u}_j(p_j)_-\ d\mu &\leq C\int\limits_{I_j}\left(\|\nabla \tilde{v}_j\|_{L^2_\mu}^2+\|\tilde{v}_j\|_{L^2_\mu}^2\right)^{\frac{1}{n+1}}W_{2m}(\tilde{v}_j)^{-\frac{1}{2(n+1)}}\dot\lambda_{2m}\ d\tau\\
&+C\int\limits_{I_j} \left|\ln W_{2m}(\tilde{v}_j)\right|^{-1}\dot\lambda_{2m} \ d\tau=: \text{I}+\text{II}.
\end{align*}
By Cauchy-Schwartz, Jensen's inequality, the monotone decreasing property of $\tau\mapsto W_{2m}(\tilde{v}_j(\tau))$ and \eqref{eq:contra2m2} for $\dot{\lambda}_{2m}$,
\begin{align*}
\text{I}\leq C\epsilon_j^{1/2}\left(\int_{I_j}\|\nabla \tilde{v}_j\|_{L^2_\mu}^2+\|\tilde{v}_j\|_{L^2_\mu}^2 d \tau\right)^{\frac{1}{n+1}}W_{2m}(\tilde{v}_j(\tau_j+1))^{1-\frac{1}{2(n+1)}}\left|\ln W_{2m}(\tilde{v}_j(\tau_j+1))\right|.
\end{align*}
If $\delta_0=\delta_0(m,n)$ is sufficiently small, 
\begin{align*}
\text{I}\leq C\epsilon_j^{1/2}\left(\int_{I_j}\|\nabla \tilde{v}_j\|_{L^2_\mu}^2+\|\tilde{v}_j\|_{L^2_\mu}^2 d \tau\right)^{\frac{1}{n+1}}W_{2m}(\tilde{v}_j(\tau_j+1))^{1-\frac{1}{n+1}}.
\end{align*}
Similarly, using the monotonicity of $W_{2m}(\tilde{v}(\tau))$, H\"older's inequality and \eqref{eq:contra2m2} we have
\begin{align*}
\text{II}\leq C\epsilon_j^{1/2}\int_{I_j}W_{2m}(\tilde{v}_j(\tau)) d\tau.
\end{align*}
Thus, combining the above estimates and using again the monotonic property of the Weiss energy  we arrive at 
\begin{align*}
&\int\limits_{I_j}\int\limits_{\R^{n-1}\times \{0\}}-\p_n\tilde{u}_j(p_j)_-\ d\mu \leq C\epsilon_j^{1/2}\left(\int\limits_{I_j}\|\nabla \tilde{v}_j\|_{L^2_\mu}^2\ d\tau\right)^{\frac{1}{n+1}}\left(\int\limits_{I_j}W_{2m}(\tilde{v}_j(\tau))\ d\tau\right)^{1-\frac{1}{n+1}}\\
&+C\epsilon_j^{1/2}\left(\int\limits_{I_j}\|\tilde{v}_j\|_{L^2_\mu}^2\ d \tau\right)^{\frac{1}{n+1}}\left(\int\limits_{I_j} W_{2m}(\tilde{v}_j(\tau))\ d\tau\right)^{1-\frac{1}{n+1}} + C\epsilon_j^{1/2}\int\limits_{I_j}W_{2m}(\tilde{v}_j(\tau))\ d\tau.
\end{align*}
Using $W_{2m}(\tilde{v}_j)\leq \frac{1}{4}\|\nabla \tilde{v}_j\|_{L^2_\mu}^2$ to estimate the first and the third term, and applying Young's inequality to the second term, we obtain \eqref{eq:M}.
\end{proof}

Similar as for the case $\kappa=3/2$, Proposition \ref{prop:negative_Weiss} and Proposition \ref{prop:homo2m2} imply a decay estimate for the Weiss energy.

\begin{cor}\label{cor:decay_rate_2m}
Let $\tilde{u}$ be a solution to \eqref{eq:bulk_conf}--\eqref{eq:boundary_conf} with $\kappa=2m$ and satisfy \eqref{eq:sob_conf}.  Then
\begin{itemize}
\item[(i)] Weiss energy goes to $-\infty$ exponentially fast if at the initial time it is negative: there exists $\gamma_{m}\in (0,1)$ such that if $W_{2m}(\tilde{u}(0))<0$, then
\begin{equation}\label{eq:growth_2m}
W_{2m}(\tilde{u}(\tau))\leq e^{\gamma_{m} \tau} W_{2m}(\tilde{u}(0)).
\end{equation}
Moreover, in this case we have
\begin{equation*}
\|\tilde{u}(\tau)\|_{L^2_\mu}^2 \geq -\frac{2W_{2m}(\tilde{u}(0))}{\gamma_m}\left( e^{\gamma_m \tau}-1\right) + \|\tilde{u}(0)\|_{L^2_\mu}^2.
\end{equation*}
\item[(ii)] If $W_{2m}(\tilde{u}(\tau))\geq 0$ for all $\tau\in (0,\infty)$, and at $\tau=0$ one has $\|\tilde{u}(0)\|_{L^2_\mu}=1$ and $W_{2m}(\tilde{u}(0))\leq \delta_0$ for some $\delta_0=\delta_0(m,n)>0$ small, then the Weiss energy satisfies the following decay estimate: there exists $c_0\in (0,1)$ depending on $n,m$ such that 
\begin{equation*}
W_{2m}(\tilde{u}(\tau))\leq \frac{C}{(A_0+c_0\tau)\left|\ln (A_0+c_0\tau)\right|^2},\ \tau\in (0,\infty)
\end{equation*}
where $A_0:=(W_{2m}(\tilde{u}(0))\left|\ln W_{2m}(\tilde{u}(0))\right|^2)^{-1}$.
Moreover, there exists a unique non-zero $\tilde{u}(\infty):=\lim_{\tau\rightarrow \infty}\tilde{u}(\tau)\in \mathcal{E}_{2m}^+$ and a constant $C>0$ depending on $m$ and $n$, such that
\begin{equation*}
0\leq \|\tilde{u}(\infty)\|_{L^2_\mu}^2-\|\tilde{u}(\tau)\|_{L^2_\mu}^2 \leq \frac{C}{\ln\tau}.
\end{equation*}
\end{itemize}
\end{cor}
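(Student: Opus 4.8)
\textbf{Proof plan for Corollary \ref{cor:decay_rate_2m}.}

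The plan is to convert the discrete one-step estimates of Proposition \ref{prop:negative_Weiss} and Proposition \ref{prop:homo2m2} into continuous-in-$\tau$ decay/growth statements, and then integrate the relations between the Weiss energy and $\p_\tau\|\tilde u(\tau)\|_{L^2_\mu}^2$ coming from \eqref{eq:w2} (together with its $2m$ analogue in Lemma \ref{lem:evo_parameter}) to obtain the stated control on $\|\tilde u(\tau)\|_{L^2_\mu}^2$ and the existence of the limit. For part (i), since $W_{2m}(\tilde u(0))<0$ and $\tau\mapsto W_{2m}(\tilde u(\tau))$ is monotone decreasing by \eqref{eq:dtauW2}, the energy stays strictly negative, so Proposition \ref{prop:negative_Weiss} applies at every time; iterating $W_{2m}(\tilde u(\tau+1))\leq(1+c_0)W_{2m}(\tilde u(\tau))$ over integer steps and using monotonicity to fill in the gaps yields \eqref{eq:growth_2m} with $\gamma_m=\ln(1+c_0)$. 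The lower bound on $\|\tilde u(\tau)\|_{L^2_\mu}^2$ then follows by integrating the identity $W_{2m}(\tilde u(\tau))=-\tfrac12\p_\tau\|\tilde u(\tau)\|_{L^2_\mu}^2$ (the $2m$-version of \eqref{eq:w2}, which holds by the same argument as in Lemma \ref{lem:Weiss_conf}) against the exponential bound: $\|\tilde u(\tau)\|_{L^2_\mu}^2-\|\tilde u(0)\|_{L^2_\mu}^2=-2\int_0^\tau W_{2m}(\tilde u(s))\,ds\geq -2W_{2m}(\tilde u(0))\int_0^\tau e^{\gamma_m s}\,ds$.

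For part (ii), the first task is to pass from the discrete logarithmic inequality of Proposition \ref{prop:homo2m2} to a differential inequality for $\omega(\tau):=W_{2m}(\tilde u(\tau))$. Writing the one-step estimate as $\omega(\tau+1)\leq(1-c_0\,\omega(\tau+1)|\ln\omega(\tau+1)|^2)\omega(\tau)$ and using that $\omega$ is nonincreasing and small (so $\omega|\ln\omega|^2$ is itself monotone in the relevant range, guaranteed by shrinking $\delta_0$), one derives $\omega(\tau)-\omega(\tau+1)\geq c_0\,\omega(\tau+1)^2|\ln\omega(\tau+1)|^2\gtrsim c_0\,\omega(\tau)^2|\ln\omega(\tau)|^2$ after possibly adjusting the constant. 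Comparing with the ODE $\dot g=-c_0 g^2|\ln g|^2$, whose solution behaves like $g(\tau)\sim\frac{1}{c_0\tau|\ln\tau|^2}$, and feeding in the initial value $\omega(0)$ via the quantity $A_0=(\omega(0)|\ln\omega(0)|^2)^{-1}$, one obtains the claimed bound $\omega(\tau)\leq \frac{C}{(A_0+c_0\tau)|\ln(A_0+c_0\tau)|^2}$; the cleanest way is to prove by induction on integer $k$ that $\omega(k)\leq\frac{C}{(A_0+c_0 k)|\ln(A_0+c_0k)|^2}$ and then interpolate by monotonicity.

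It remains to upgrade this energy decay to convergence of $\tilde u(\tau)$. Using \eqref{eq:dtauW2} one has $\int_{\tau_1}^{\tau_2}\|\p_\tau\tilde v\|_{L^2_\mu}^2+\sum_{|\alpha|=2m}\dot\lambda_\alpha^2\,d\tau\leq W_{2m}(\tilde u(\tau_1))-W_{2m}(\tilde u(\tau_2))\leq\omega(\tau_1)$, so by Cauchy–Schwarz on unit intervals $\|\tilde u(\tau_2)-\tilde u(\tau_1)\|_{L^2_\mu}\lesssim\sum_k\omega(k)^{1/2}$ over $k\geq\tau_1$; since $\omega(k)^{1/2}\lesssim k^{-1/2}|\ln k|^{-1}$ this series converges, hence $\tilde u(\infty):=\lim_{\tau\to\infty}\tilde u(\tau)$ exists in $L^2_\mu$, is stationary by the gradient-flow structure (passing to the limit in \eqref{eq:dtauW2}), and lies in $\mathcal{E}_{2m}^+$ by Proposition \ref{prop:wlimit}; nonvanishing follows because $\dot\lambda_{2m}\geq0$ (so $\lambda_{2m}(\tau)$ is nondecreasing) together with the smallness hypotheses guaranteeing $\lambda_{2m}(0)$ is comparable to $\|\tilde u(0)\|_{L^2_\mu}=1$, in the spirit of Remark \ref{rmk:non_zero}. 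Finally, $0\leq\|\tilde u(\infty)\|_{L^2_\mu}^2-\|\tilde u(\tau)\|_{L^2_\mu}^2=2\int_\tau^\infty W_{2m}(\tilde u(s))\,ds\leq 2\int_\tau^\infty\frac{C\,ds}{(A_0+c_0 s)|\ln(A_0+c_0 s)|^2}\leq\frac{C'}{\ln\tau}$, where the first equality uses the $2m$-analogue of \eqref{eq:w2} and the finiteness of the integral. \textbf{The main obstacle} I expect is the passage from the discrete logarithmic inequality to the sharp continuous rate $\frac{C}{(A_0+c_0\tau)|\ln(A_0+c_0\tau)|^2}$ with the correct dependence on the initial datum through $A_0$: one must be careful that $\omega|\ln\omega|^2$ is monotone where needed and that the inductive constant $C$ does not degenerate, which is exactly what forces the smallness condition $W_{2m}(\tilde u(0))\leq\delta_0$.
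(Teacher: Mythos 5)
Your part~(i) matches the paper in substance: iterate Proposition~\ref{prop:negative_Weiss}, fill in non-integer times with monotonicity, and integrate $W_{2m}(\tilde u(\tau))=-\tfrac12\p_\tau\|\tilde u(\tau)\|_{L^2_\mu}^2$.

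For part~(ii), however, there is a concrete error in your argument for existence of the limit $\tilde u(\infty)$. You bound
$\|\tilde u(\tau_2)-\tilde u(\tau_1)\|_{L^2_\mu}\lesssim\sum_{k\geq\tau_1}\omega(k)^{1/2}$
and claim this sum converges because $\omega(k)^{1/2}\lesssim k^{-1/2}|\ln k|^{-1}$. But $\sum_k k^{-1/2}|\ln k|^{-1}$ \emph{diverges} (the substitution $u=\sqrt{x}$ reduces $\int\frac{dx}{\sqrt{x}\ln x}$ to the logarithmic integral $\int\frac{du}{\ln u}$). So the telescoping Cauchy-in-$L^2_\mu$ argument does not go through, and the decay rate in~(ii) is genuinely too slow to yield a summable tail in this way. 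The paper avoids this entirely: it uses the monotone boundedness of the Weiss energy together with \eqref{eq:Weiss_new2} to get convergence $\tilde u(\tau_j)\to\tilde u_0\in\mathcal E_{2m}^+$ along a \emph{subsequence}, and then proves uniqueness of the limit by projecting $\tilde u(\tau)=\lambda(\tau)p_0+\tilde w(\tau)$ onto the one-dimensional span of $p_0$ and using that $\tau\mapsto\|\tilde w(\tau)\|^2_{L^2_\mu}$ is monotone decreasing with $\|\tilde w(\tau_j)\|_{L^2_\mu}\to 0$. You need a soft sequential-compactness-plus-uniqueness argument of this type, not a rate.

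Two further points are shaky, though not outright wrong. First, converting the one-step inequality of Proposition~\ref{prop:homo2m2} to a continuous bound: you write $\omega(\tau)-\omega(\tau+1)\geq c_0\,\omega(\tau+1)^2|\ln\omega(\tau+1)|^2\gtrsim c_0\,\omega(\tau)^2|\ln\omega(\tau)|^2$ ``after possibly adjusting the constant,'' but since $s\mapsto s^2|\ln s|^2$ is increasing for small $s$ and $\omega(\tau+1)\leq\omega(\tau)$, the last inequality goes the \emph{wrong} way and cannot be fixed by just adjusting a constant. The paper instead proves a step-$h$ version of the epiperimetric inequality for all $h\in(0,1]$, which upgrades to a true differential inequality $\frac{d}{d\tau}W_{2m}\leq -c_0 W_{2m}^2|\ln W_{2m}|^2$, and then integrates using the explicit antiderivative $F(s)=-\frac{1}{s(\ln s)^2}-2\int_{-\ln s_0}^{-\ln s}\frac{e^u}{u^3}du$; your induction-on-$k$ variant could perhaps be made to work with a contradiction argument, but as written it is not justified. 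Second, your nonvanishing argument appeals to ``$\lambda_{2m}(0)$ comparable to $1$ in the spirit of Remark~\ref{rmk:non_zero}'', but the hypotheses of the Corollary only assume $W_{2m}(\tilde u(0))\leq\delta_0$, not closeness to $\mathcal E_{2m}^+$. The paper's proof is cleaner: assume $\lim\|\tilde u(\tau)\|^2_{L^2_\mu}=0$, integrate \eqref{eq:w2} from $\tau$ to $\infty$ to get $\|\tilde u(\tau)\|^2_{L^2_\mu}\lesssim(\ln(A_0+c_0\tau))^{-1}$, and evaluate at $\tau=0$ to contradict $\|\tilde u(0)\|_{L^2_\mu}=1$ for $\delta_0$ small.
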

\begin{proof}
The proof for (i) is the same as for Corollary \ref{cor:decay_rate_32}. We will only provide the proof for (ii).

We first show the decay estimate for the Weiss energy. By the same arguments as above we can get a slightly more general decay estimate for the Weiss energy: under the same assumptions of Proposition \ref{prop:homo2m2}, there exists a $c_0\in (0,1)$ only depending on $m,n$ such that
\begin{equation*}
W_{2m}(\tilde{u}(\tau+h))\leq \left(1-c_0 h W_{2m}(\tilde{u}(\tau+h))\left|\ln W_{2m}(\tilde{u}(\tau+h))\right|^2\right) W_{2m}(\tilde{u}(\tau)),
\end{equation*}
for all $\tau\in (0,\infty)$ and $h\in (0,1]$. This implies that for any $0<\tau_1<\tau_2<\infty$,
\begin{equation}\label{eq:diff_w_2m}
W_{2m}(\tilde{u}(\tau_2))-W_{2m}(\tilde{u}(\tau_1))=\int\limits_{\tau_1}^{\tau_2}\frac{d}{d\tau}W_{2m}(\tilde{u}(\tau))\leq -c_0 \int\limits_{\tau_1}^{\tau_2}W_{2m}(\tilde{u}(\tau))^2\left|\ln W_{2m}(\tilde{u}(\tau))\right|^2.
\end{equation}
To solve this differential inequality we introduce 
\begin{equation*}
F(s)=-\frac{1}{s(\ln s)^2}-2\int_{-\ln s_0}^{-\ln s} \frac{e^u}{u^3}\ du,
\end{equation*}
where $s\in (0,s_0)$ with $0<s_0\ll 1$. Direct computation yields that $F'(s)=\frac{1}{s^2|\ln s|^2}$ and 
\begin{equation}\label{eq:log_asymp}
- \frac{2 \ln|\ln s|}{s|\ln s|^3}\leq -2\int_{-\ln s_0}^{-\ln s} \frac{e^u}{u^3}\ du \leq 0, \ 0<s< s_0,
\end{equation}
which is of $o(\frac{1}{s|\ln s|^2})$ as $s\rightarrow 0$. 
From \eqref{eq:diff_w_2m} we obtain
\begin{equation*}
F(W_{2m}(\tilde{u}(\tau)))-F(W_{2m}(\tilde{u}(0)))\leq -c_0 \tau.
\end{equation*}
Then the expression of $F$ and \eqref{eq:log_asymp} yield
\begin{equation*}
\frac{2}{W_{2m}(\tilde{u}(\tau))\left|\ln W_{2m}(\tilde{u}(\tau))\right|^2}\geq A_0+c_0\tau, \ A_0:=\frac{1}{W_{2m}(\tilde{u}(0))\left|\ln W_{2m}(\tilde{u}(0))\right|^2}.
\end{equation*}
Let $G(w):=w|\ln w|^2$, then the above inequality can be rewritten in terms of $G$ as $G(W_{2m}(\tilde{u}(\tau)))\leq \frac{1}{A_0+c_0\tau}$.  Noticing that $G$ is monotone increasing for $0<w\ll 1$ and $G^{-1}(y)\leq \frac{y}{|\ln y|^2}$, we thus obtain
\begin{equation*}
W_{2m}(\tilde{u}(\tau))\leq \frac{C}{(A_0+c_0\tau)\left|\ln (A_0+c_0\tau)\right|^2}
\end{equation*}
for some universal $C>0$, provided $\delta_0$ is sufficiently small.

The decay of the Weiss energy together with \eqref{eq:w2} yields that $\lim_{\tau\rightarrow \infty} \|\tilde{u}(\tau)\|^2_{L^2_\mu}$ is non-zero if $\delta_0$ is sufficiently small. Because otherwise, integrating \eqref{eq:w2} gives $0\leq \|\tilde{u}(\tau)\|_{L^2_\mu}^2\leq C(\ln(A_0+c_0\tau))^{-1}$ for some $C$ depends on $m,n$. Evaluating at $\tau=0$ thus yields $\|\tilde{u}(0)\|_{L^2_\mu}^2\leq C(\ln A_0)^{-1}\leq C(-\ln W_{2m}(\tilde{u}(0)))^{-1}$. Thus we obtain a contradiction if $\delta_0$ is sufficiently small.

The boundedness of the Weiss energy together with \eqref{eq:Weiss_new2} yields that along a sequence of time $\tau_j\rightarrow \infty$, $\tilde{u}(\tau_j,\cdot)\rightarrow \tilde{u}_0$, where $\tilde{u}_0$ is a non-zero stationary solution to the Signorini problem. Then by the classification of the stationary solution (cf. Proposition \ref{prop:wlimit}) we have that $u_0=\lambda_0 p_0\in \mathcal{E}_{2m}^+$ with $\|p_0\|_{L^2_\mu}=1$ and $\lambda_0>0$. To show $u_0$ is the unique limit, for each $\tau$ we project $\tilde{u}$ to the linear space spanned by $p_0$, i.e. $\tilde{u}(\tau)=\lambda(\tau)p_0+\tilde{w}(\tau)$ with $\langle p_0, \tilde{w}(\tau)\rangle_{L^2_\mu}=0$. Noting that $\tau\mapsto \|\tilde{w}(\tau)\|_{L^2_\mu}^2$ is monotone decreasing and $\|\tilde{w}(\tau_j)\|_{L^2_\mu}^2\rightarrow 0$,  we have that $u_0=\lambda_0p_0$ is the unique limit.
\end{proof}

\section{Consequences of the epiperimetric inequality}
\label{sec:consequences}

\subsection[Consequences: $\kappa=3/2$]{The case $\kappa = 3/2$: Uniqueness of blow-ups and regularity of the regular free boundary}
\label{sec:kappa32}
In this section we apply the decay estimates for the Weiss energy in Section \ref{sec:dynamical} to our original Signorini problem to derive the regularity of the free boundary.

\begin{proof}[Proof for Theorem \ref{thm:32}]
We will prove the following:
\begin{align}\label{eq:rescale_rate}
\left(\int\limits_{\R^n_+} |u_{\lambda}(x,t)- u_{0}(x)|^2G(x,t)dx\right)^{1/2} \leq C (\sqrt{-t})^{\kappa+\gamma_0}\lambda^{\gamma_0} ,
\end{align}
where for any $\lambda\in (0,1]$, $u_{\lambda}(x,t):= \lambda^{-3/2}u(\lambda x, \lambda^2 t)$. Theorem \ref{thm:32} follows by taking $\lambda=1$.

Let $\tilde{u}:=\tilde{u}_{3/2}$ be the $3/2$-normalized solution in the conformal coordinates as in Lemma \ref{lem:coordinates}.  Firstly we want to show that there exists $\gamma_0\in (0,1)$ universal, such that the upper bound in \eqref{eq:vanishing_32} yields $W_{3/2}(\tilde{u}(\tau))\geq 0$ for all $\tau$. Indeed, assume that $W_{3/2}(\tilde{u}(\tau_0))<0$ for some $\tau_0>0$. Then by Corollary \ref{cor:decay_rate_32}, there exists a universal $\gamma_0\in (0,1)$ and $C_0>0$, such that $\|\tilde{u}(\tau)\|_{L^2_\mu}^2 \geq C_0e^{\gamma_0(\tau-\tau_0)}$ for $\tau>\tau_0$. Transforming back to the original coordinate we have $\int_{\R^n_+} u^2(x,t)G(x,t)dx\geq C(\sqrt{-t})^{3-2\gamma_0}$ for some $C>0$ depending on $\tau_0$ and for $|t|$ sufficiently small. This is however a contradiction to the upper bound in \eqref{eq:vanishing_32}.

With nonnegative Weiss energy at hand we apply (i) in Corollary \ref{cor:decay_rate_32} to conclude that there exists a unique $\tilde{u}(\infty)=:u_0\in \mathcal{E}_{3/2}$ such that
\begin{equation*}
\begin{split}
\int\limits_{\R^n_+}(\sqrt{-t})^{-3}|u_\lambda(x,t)-u_0(x)|^2G(x,t)dx &= c_n\int\limits_{\R^n_+} |\tilde{u}(y,\tau-2\ln \lambda)-u_0(y)|^2 d\mu \\
&\leq Ce^{-\gamma_0(\tau-2\ln\lambda)}= C(\sqrt{-t})^{2\gamma_0} \lambda^{2\gamma_0}.
\end{split}
\end{equation*}
Here in the first equation we have used \eqref{eq:kappa} and that $u_0$ is $3/2$-homogeneous.

In the end we will show that the lower bound in \eqref{eq:vanishing_32} implies that $u_0\neq 0$. Indeed, if $u_0$ vanishes identically, then it holds
\begin{align*}
\int\limits_{\R^n_+} |u (x,t)|^2 G(x,t) dx \leq C (\sqrt{-t})^{3+2\gamma_0}, \quad t\in [-1,0].
\end{align*}
The above estimate yields that $H_u(r)\leq C r^{3+2\gamma_0}$. This is a contradiction to the lower bound in \eqref{eq:vanishing_32}.
\end{proof}

\begin{rmk}\label{rmk:non_zero2}
Rewriting \eqref{eq:smallness_32} in Remark \ref{rmk:non_zero} into the original variable, we see that instead of the lower bound assumption in \eqref{eq:vanishing_32}, we can assume the solution is close to $\mathcal{E}_{3/2}$ at $t=-1$ to guarantee the non-triviality of the $3/2$-blowup limit. More precisely,
assume that at $t=-1$
\begin{equation}\label{eq:close1}
W_{3/2}(u(-1)) \leq \delta_0\int_{\R^n_+}|u(x,-1)|^2 d\tilde{\mu}(x), \quad d\tilde{\mu}(x):=G(x,-1)dx,
\end{equation}
where $W_{3/2}(u_0)$ is the Weiss energy in the original variable as in \eqref{eq:weiss_original}, and
\begin{equation}\label{eq:close2}
\inf_{p\in \mathcal{E}_{3/2}}\int_{\R^n_+} |u(x,-1)-p(x)|^2 d\tilde{\mu}(x) \leq \delta_0\int_{\R^n_+}|u(x,-1)|^2 d\tilde{\mu}(x).
\end{equation}
Then if $\delta_0$ is sufficiently small depending on $n$ and $\|f\|_{L^\infty}$, there is a unique $u_0=c_0 \Ree(x'\cdot e_0+i|x_n|)^{3/2}\in \mathcal{E}_{3/2}$ with $c_0\geq c_n>0$ such that \eqref{eq:rescale_rate} holds true. We note that conditions \eqref{eq:close1}--\eqref{eq:close2} are satisfied if
$$\dist_{W^{1,2}_{\tilde{\mu}}}\left(\frac{u(\cdot, -1)}{\|u(\cdot, -1)\|_{L^2_\mu}}, \mathcal{E}_{3/2}\right)\leq \delta_0.$$

We also note that under the assumptions of Theorem \ref{thm:32},  \eqref{eq:close1}--\eqref{eq:close2} are satisfied for $u_\lambda$ for sufficiently small $\lambda>0$ depending on $u_0$.
\end{rmk}

An advantage of the conditions \eqref{eq:close1}--\eqref{eq:close2} is that they are stable under the translation. More precisely, by the H\"older continuity of $u$, \eqref{eq:close1}--\eqref{eq:close2} hold with constant $2\delta_n$ for $u(x-x_0,t-t_0)$, where $(x_0,t_0)$ varies in a small neighborhood of $(0,0)$. We let $\Gamma_{3/2}(u)$ denote the set of the free boundary points at which the $3/2$-homogeneous scaling $u_{(x_0,t_0),\lambda}$ has a unique \emph{nonzero} blow-up limit in $\mathcal{E}_{3/2}$ as $\lambda\rightarrow 0$. Then the above discussion leads to the openness of $\Gamma_{3/2}(u)$:

\begin{prop}\label{prop:stability}
Let $u:S_2^+\rightarrow \R$ be a solution to \eqref{eq:parabolic} which satisfies assumptions (A)-(C). Let $H_u^{(x_0,t_0)}(r):=\frac{1}{r^2}\int_{S_{r^2}^+}u(x-x_0,t-t_0)^2 G(x,t)dxdt$, $r>0$. Assume that for each $(x_0,t_0)\in \Gamma_u$,
\begin{equation}\label{eq:vanishing_322}
H_u^{(x_0,t_0)}(r)\leq Cr^{3-\gamma_0}
\end{equation}
for $r$ sufficiently small depending on $u$ and $(x_0,t_0)$. Assume that $(\bar x_0,\bar t_0)\in \Gamma_{3/2}(u)\cap S'_1$. Then there exists a small $r>0$ depending on $(\bar x_0,\bar t_0)$ such that $\Gamma_u\cap (B_r(\bar x_0)\times (\bar t_0-r^2,\bar t_0+r^2))\subset \Gamma_{3/2}(u)$.
\end{prop}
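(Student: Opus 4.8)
The plan is to reduce the statement, by translation, to the following: near every relevant free boundary point the hypotheses of Remark~\ref{rmk:non_zero2} hold after one fixed parabolic rescaling, and that property is open. After an initial translation we may assume $(\bar x_0,\bar t_0)=(0,0)\in\Gamma_{3/2}(u)$, so that the $3/2$-homogeneous rescalings $u_{0,\lambda}(x,t):=\lambda^{-3/2}u(\lambda x,\lambda^2 t)$ converge, as $\lambda\to0$, to a fixed nonzero $u_0\in\mathcal{E}_{3/2}$. First I would upgrade this convergence from $L^2_{\tilde\mu}$ to $W^{1,2}_{\tilde\mu}$ on the slice $\{t=-1\}$, combining the interior H\"older estimate (C) (which is valid up to $\{x_n=0\}$) with the Gaussian-weighted global bound (B) to absorb the tails. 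Since $W_{3/2}(u_0)=0$ and $\|u_0\|_{L^2_{\tilde\mu}}\ge c_n>0$ (Remark~\ref{rmk:non_zero}), this yields, for a fixed small $\lambda_0>0$, that the rescaled solution $w:=u_{0,\lambda_0}$ satisfies \eqref{eq:close1}--\eqref{eq:close2} with constant $\delta_0/2$, where $\delta_0$ is the threshold of Remark~\ref{rmk:non_zero2}; this is essentially the last assertion of that remark. Note $w$ is again a solution of \eqref{eq:parabolic} on $S_2^+$ (as $(0,0)\in S'_1$ leaves enough room in time) and, after the harmless renormalization making $\int_{S_2^+}w^2G=1$, satisfies (A)--(C) with structural constants that do not deteriorate, since $\|f\|_{L^\infty}$ does not increase under rescaling by $\lambda_0\le1$ and \eqref{eq:close1}--\eqref{eq:close2} are invariant under scalar multiples.

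Next I would invoke the stability of \eqref{eq:close1}--\eqref{eq:close2} under small translations. For $z_1=(x_1,t_1)$ with $x_1\in\R^{n-1}\times\{0\}$ ranging over a parabolic neighbourhood of the origin, put $w_{z_1}(x,t):=\lambda_0^{-3/2}u(x_1+\lambda_0 x,\,t_1+\lambda_0^2 t)$, the $3/2$-homogeneous rescaling of $u$ centred at $z_1$; thus $w_0=w$ and $w_{z_1}$ is a small translate of $w$. Arguing exactly as in the discussion following Remark~\ref{rmk:non_zero2}, but with $w$ in place of $u$ (using the H\"older continuity of $w$ and $\nabla w$ from (C) on a fixed compact set around the origin, and (B) to dominate the Gaussian tails), the map $z_1\mapsto w_{z_1}(\cdot,-1)$ is continuous into $W^{1,2}_{\tilde\mu}(\R^n_+)$ at $z_1=0$. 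Hence there is a parabolic neighbourhood $\mathcal{N}$ of the origin such that for all $z_1\in\mathcal{N}$ the function $w_{z_1}$ is a solution of \eqref{eq:parabolic} on $S_2^+$ satisfying (A)--(C) and, at $t=-1$, the conditions \eqref{eq:close1}--\eqref{eq:close2} with constant $\delta_0$. I expect this tail-controlled perturbation estimate to be the only genuinely technical point; the rest is bookkeeping.

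Finally I would fix $z_1=(x_1,t_1)\in\Gamma_u\cap\mathcal{N}$ and apply Remark~\ref{rmk:non_zero2} to $w_{z_1}$. Translation and rescaling carry $\Gamma_u$ onto the free boundary of $w_{z_1}$, so the origin is a free boundary point of $w_{z_1}$; moreover $w_{z_1}$ inherits the upper bound in \eqref{eq:vanishing_32} from the hypothesis \eqref{eq:vanishing_322} at $z_1$ through the change of variables relating $H_{w_{z_1}}$ to $H_u^{z_1}$ (so that, as in the proof of Theorem~\ref{thm:32}, $W_{3/2}(w_{z_1}(\tau))\ge0$ for all $\tau$). Combined with \eqref{eq:close1}--\eqref{eq:close2} from the previous step, Remark~\ref{rmk:non_zero2} applies and produces a unique \emph{nonzero} $p_0\in\mathcal{E}_{3/2}$ with $(w_{z_1})_\lambda\to p_0$ at the rate \eqref{eq:rescale_rate} as $\lambda\to0$. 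Since $(w_{z_1})_\lambda=u_{z_1,\lambda_0\lambda}$, this says that the $3/2$-homogeneous rescalings of $u$ centred at $z_1$ converge to the nonzero limit $p_0$, i.e. $z_1\in\Gamma_{3/2}(u)$. As $z_1\in\Gamma_u\cap\mathcal{N}$ was arbitrary and $\mathcal{N}\supset B_r(0)\times(-r^2,r^2)$ for some $r>0$, undoing the initial translation gives $\Gamma_u\cap\bigl(B_r(\bar x_0)\times(\bar t_0-r^2,\bar t_0+r^2)\bigr)\subset\Gamma_{3/2}(u)$, as claimed.
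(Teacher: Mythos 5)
Your argument is correct and follows the same line as the paper, which presents Proposition~\ref{prop:stability} as an immediate consequence of the preceding discussion (stability of \eqref{eq:close1}--\eqref{eq:close2} under translation via the H\"older continuity of $u$, plus the nonnegativity of the Weiss energy ensured by \eqref{eq:vanishing_322}). You have simply spelled out the steps the paper leaves implicit — the fixed rescaling $\lambda_0$, the passage from $L^2_{\tilde\mu}$ to $W^{1,2}_{\tilde\mu}$ convergence via (B)--(C), and the verification that the rescaled/translated solutions still satisfy the structural hypotheses — which is a faithful expansion rather than a different route.
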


Given $(x_0,t_0)\in \Gamma_{3/2}(u)$, we let
$$u_{(x_0,t_0)}(x)=c_{(x_0,t_0)}\Ree(e_{(x_0,t_0)}\cdot x + i|x_n|)^{3/2},\quad c_{(x_0,t_0)}>0$$
denote the blow-up limit. Next we prove the continuity of the maps $\Gamma_{3/2}(u)\ni (x_0,t_0) \mapsto c_{(x_0,t_0)}$ and $\Gamma_{3/2}(u) \ni (x_0,t_0)\mapsto e_{(x_0,t_0)}$.

\begin{prop}
\label{prop:coeff}
Let $u:S_1^+ \rightarrow \R$ be a solution to \eqref{eq:parabolic}. 
Let $(x_0,t_0), (y_0,s_0)\in \Gamma_{3/2}(u) \cap Q_1$, where $Q_1:=B_1(0)\times (-1,0)$. Then, the maps
\begin{align*}
\Gamma_{3/2}(u)\ni (x_0,t_0) \mapsto c_{(x_0,t_0)} \in \R_+,\quad
\Gamma_{3/2}(u) \ni (x_0,t_0)\mapsto e_{(x_0,t_0)} \in \mathbb{S}^{n-1}\cap\{y_n=0\}
\end{align*}
are parabolically $\theta$-H\"older continuous for some $\theta \in (0,1)$.
\end{prop}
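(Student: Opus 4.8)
The plan is to upgrade Theorem \ref{thm:32} to a (parabolically) $\theta$-H\"older estimate for the blow-up map
\begin{align*}
\Gamma_{3/2}(u)\cap Q_1\ni (x_0,t_0)\longmapsto u_{(x_0,t_0)}=c_{(x_0,t_0)}\,\Ree\big(e_{(x_0,t_0)}\cdot x+i|x_n|\big)^{3/2}\in L^2_{\tilde\mu}(\R^n_+),
\end{align*}
and then read off the H\"older continuity of $c_{(x_0,t_0)}$ and $e_{(x_0,t_0)}$ from a bi-Lipschitz parametrization of $\mathcal{E}_{3/2}$. For the latter: by Remark \ref{rmk:non_zero2}, the H\"older continuity of $u$ (which makes \eqref{eq:close1}--\eqref{eq:close2} stable under the translation $u(\cdot-x_0,\cdot-t_0)$), and the openness of $\Gamma_{3/2}(u)$ from Proposition \ref{prop:stability}, the conditions \eqref{eq:close1}--\eqref{eq:close2} hold with a \emph{uniform} constant along a fixed compact part of $\Gamma_{3/2}(u)\cap Q_1$, so that $c_n\le c_{(x_0,t_0)}\le C$ there (the lower bound from Remark \ref{rmk:non_zero2}, the upper bound from (B)). Since $\Phi(e):=c_n\Ree(e\cdot x+i|x_n|)^{3/2}$ is a smooth injective immersion of the compact manifold $\mathbb{S}^{n-1}\cap\{x_n=0\}$ into $L^2_{\tilde\mu}(\R^n_+)$ (for $v\perp e$ tangent to the sphere, $\partial_v\Phi(e)=c_n\Ree(e\cdot x+i|x_n|)^{1/2}(v\cdot x)\ne 0$ in $L^2_{\tilde\mu}$), it is a bi-Lipschitz embedding onto its image; writing $c_{(x_0,t_0)}=\|u_{(x_0,t_0)}\|_{L^2_{\tilde\mu}}$ up to the fixed factor $\|\Phi(e)\|_{L^2_{\tilde\mu}}$ and $e_{(x_0,t_0)}=\Phi^{-1}\big(u_{(x_0,t_0)}/\|u_{(x_0,t_0)}\|_{L^2_{\tilde\mu}}\big)$, a $\theta$-H\"older bound for $(x_0,t_0)\mapsto u_{(x_0,t_0)}$ (together with $c_{(x_0,t_0)}\ge c_n$) transfers to $\theta$-H\"older bounds for both $c$ and $e$.

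For the H\"older bound on the blow-up map, fix $(x_0,t_0),(y_0,s_0)\in\Gamma_{3/2}(u)\cap Q_1$ with small parabolic distance $d:=\dist((x_0,t_0),(y_0,s_0))$, and pick a scale $\rho\in[d,\rho_0]$, $\rho_0$ a fixed small radius. The triangle inequality in $L^2_{\tilde\mu}$ gives
\begin{align*}
\|u_{(x_0,t_0)}-u_{(y_0,s_0)}\|_{L^2_{\tilde\mu}}
&\le \|u_{(x_0,t_0),\rho}(\cdot,-1)-u_{(x_0,t_0)}\|_{L^2_{\tilde\mu}}+\|u_{(y_0,s_0),\rho}(\cdot,-1)-u_{(y_0,s_0)}\|_{L^2_{\tilde\mu}}\\
&\quad+\|u_{(x_0,t_0),\rho}(\cdot,-1)-u_{(y_0,s_0),\rho}(\cdot,-1)\|_{L^2_{\tilde\mu}}.
\end{align*}
By \eqref{eq:rescale_rate} (with $t=-1$, $\lambda=\rho$) applied to the two translates, and using the uniformity of its constant along the compact part of $\Gamma_{3/2}(u)$ noted above, the first two terms are $\le C\rho^{\gamma_0}$. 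For the third, \emph{translation} term I claim
\begin{align*}
\|u_{(x_0,t_0),\rho}(\cdot,-1)-u_{(y_0,s_0),\rho}(\cdot,-1)\|_{L^2_{\tilde\mu}}\le C\,d\,\rho^{-3/2}.
\end{align*}
Granting this and optimizing, $\rho^{\gamma_0}\sim d\,\rho^{-3/2}$, i.e.\ $\rho\sim d^{2/(2\gamma_0+3)}$, yields $\|u_{(x_0,t_0)}-u_{(y_0,s_0)}\|_{L^2_{\tilde\mu}}\le C d^{\,\theta}$ with $\theta:=\dfrac{2\gamma_0}{2\gamma_0+3}\in(0,1)$, which is what is needed.

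It remains to prove the translation estimate, and this is the technical heart of the argument. Unwinding the rescalings,
\begin{align*}
u_{(x_0,t_0),\rho}(x,-1)-u_{(y_0,s_0),\rho}(x,-1)=\rho^{-3/2}\Big(u\big(x_0+\rho x,\,t_0-\rho^2\big)-u\big(y_0+\rho x,\,s_0-\rho^2\big)\Big),
\end{align*}
and the two evaluation points differ by $(x_0-y_0,\,t_0-s_0)$, of parabolic size $d$. By assumption (C), $\nabla u$ is parabolically $C^{\alpha,\alpha/2}$ up to $\{x_n=0\}$, hence $u$ is Lipschitz in $x$ and $\tfrac{1+\alpha}{2}$-H\"older in $t$ on $\big(B_{\rho_0}(x_0)\cap\overline{\R^n_+}\big)\times[-1,0]$, uniformly for $x_0$ in a fixed compact part of $\Gamma_u$. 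Thus for $|x|\le\rho_0/(2\rho)$ both evaluation points lie in this neighborhood, whence
\begin{align*}
\big|u(x_0+\rho x,t_0-\rho^2)-u(y_0+\rho x,s_0-\rho^2)\big|\le C\big(|x_0-y_0|+|t_0-s_0|^{(1+\alpha)/2}\big)\le C\,d,
\end{align*}
so this region contributes at most $C\rho^{-3}d^2$ to $\|u_{(x_0,t_0),\rho}(\cdot,-1)-u_{(y_0,s_0),\rho}(\cdot,-1)\|_{L^2_{\tilde\mu}}^2$. For $|x|>\rho_0/(2\rho)$ the Gaussian weight on that region gains a factor $e^{-c/\rho^2}$, while $\|u_{(x_0,t_0),\rho}(\cdot,-1)\|_{L^2_{\tilde\mu}}$ (and its $(y_0,s_0)$ analogue) is bounded uniformly by Remark \ref{rmk:Gaussian_bound}, the Sobolev estimate (B), and the mild growth of $u$ at infinity; hence this tail is super-polynomially small in $\rho$, and negligible compared with $d\rho^{-3/2}$ since $\rho\ge d$. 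Summing the two regions gives the claimed translation estimate. The main obstacle is exactly this last step: one must make both the convergence rate in \eqref{eq:rescale_rate} and the regularity input from (C) quantitative and uniform over the free boundary (via the stability of \eqref{eq:close1}--\eqref{eq:close2} and a covering argument), and one must dispose of the far field on the non-compact domain $\R^n_+$ where (C) is not directly available, which is done by trading the interior estimate near $\{x_n=0\}$ against the Gaussian-weighted bound (B).
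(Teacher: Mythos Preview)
Your argument is correct and follows essentially the same route as the paper: insert an intermediate scale, bound the two ``convergence'' terms by the rate \eqref{eq:rescale_rate}, bound the translation term by the interior regularity from (C), then optimize in the scale; extract $c$ and $e$ from the $L^2_{\tilde\mu}$-H\"older bound on the blow-up map via the (bi-)Lipschitz structure of $\mathcal{E}_{3/2}$. The paper's proof is slightly terser: for the translation term it only uses the $C^{\alpha,\alpha/2}$ regularity of $u$ itself, obtaining $Cd^{\alpha}\lambda^{-3/2}$ and hence $\theta=\dfrac{\gamma_0\alpha}{\gamma_0+3/2}$, whereas you exploit $\nabla u\in C^{\alpha,\alpha/2}$ to get Lipschitz-in-$x$ and the sharper $Cd\,\rho^{-3/2}$, yielding the larger exponent $\theta=\dfrac{2\gamma_0}{2\gamma_0+3}$. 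You are also more explicit about two points the paper leaves implicit: the Gaussian tail on $\{|x|>\rho_0/(2\rho)\}$ (handled by trading the local estimate (C) against the Gaussian-weighted bound (B)), and the uniformity of the constant in \eqref{eq:rescale_rate} across a compact piece of $\Gamma_{3/2}(u)$ via the stability of \eqref{eq:close1}--\eqref{eq:close2}. These are refinements, not a different strategy.
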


\begin{proof}
We note that by rotation invariance $c_{(x_0,t_0)}= c_n\|u_{(x_0,t_0)}\|_{L^2_{\tilde{\mu}}}$ for $c_n>0$. Hence,  for $(x_0,t_0), (y_0,s_0)\in \Gamma_{3/2}(u)\cap Q_1$ and $\lambda>0$
\begin{align*}
&\quad |c_{(x_0,t_0)}-c_{(y_0,s_0)}|\leq c_n \left|\|u_{(x_0,t_0)}\|_{L^2_{\tilde{\mu}}}-\|u_{(y_0,s_0)}\|_{L^2_{\tilde{\mu}}}\right|\\
&\leq c_n \left(\|u_{(x_0,t_0)}-u_{(x_0,t_0),\lambda}(\cdot, -1)\|_{L^2_{\tilde{\mu}}}
+\|u_{(y_0,s_0)}-u_{(y_0,s_0),\lambda}(\cdot, -1)\|_{L^2_{\tilde{\mu}}}\right.\\
&\quad \left.+\left|\|u_{(x_0,t_0),\lambda}(\cdot, -1)\|_{L^2_{\tilde{\mu}}}-\|u_{(y_0,s_0),\lambda}(\cdot,-1)\|_{L^2_{\tilde{\mu}}}\right|\right)\\
&\leq C\lambda^{\gamma_0} + Cd((x_0,t_0), (y_0,s_0))^\alpha \lambda^{-3/2}.
\end{align*}
Here $d((x_0,t_0), (y_0,s_0))=|x_0-y_0|+|t_0-s_0|^{1/2}$ is the parabolic distance, and to estimate the three terms coming from the triangle inequality, we have used \eqref{eq:rescale_rate} to bound the first two integrals and the interior H\"older $C^{\alpha,\alpha/2}$ estimate of the solution to bound the third integral. Balancing the above two bounds we get
\begin{equation*}
|c_{(x_0,t_0)}-c_{(y_0,s_0)}|\leq Cd((x_0,t_0),(y_0,s_0))^{\theta}, \quad \theta=\frac{\gamma_0 \alpha}{\gamma_0+3/2}\in (0,1).
\end{equation*}
Next we note that
\begin{align*}
\left\| u_{(x_0,t_0)}/c_{(x_0,t_0)} - u_{(y_0,s_0)}/c_{(y_0,s_0)} \right\|_{L^2_{\tilde{\mu}}} \geq C_n |e_{(x_0,t_0)}-e_{(y_0,s_0)}|.
\end{align*}
Using similar estimate as above and combining it with the estimate for $c_{(x_0,t_0)}$ then yields the claimed H\"older continuity of $e_{(x_0,t_0)}$.
\end{proof}

With the previous results at hand, we can prove the regularity of the regular free boundary.

\begin{prop}
\label{prop:free_bound}
Let $u:S_1^+ \rightarrow \R$ be a solution to \eqref{eq:parabolic}. Assume that $(x_0,t_0)\in \Gamma_{3/2}(u)$. Then, there exists a radius $r\in (0,1)$ depending on $(x_0,t_0)$ such that $\Gamma_u\cap Q_{r}(x_0,t_0)$, where $Q_r(x_0,t_0)=B_r(x_0)\times (t_0-r^2,t_0+r^2)$, can be represented as a graph (after a suitable choice of coordinates)
\begin{align*}
\Gamma_{3/2}(u)\cap Q_{r}(x_0,t_0):=\{(x',0,t): x_{n-1} = g(x'',t)\}.
\end{align*}
Moreover, there exists $\theta\in(0,1)$ such that $\nabla'' g \in C^{\theta,\theta/2}$.
\end{prop}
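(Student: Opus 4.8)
The plan is to upgrade the pointwise convergence and modulus-of-continuity information from Propositions \ref{prop:coeff} and \ref{prop:stability} into a genuine graphical description of $\Gamma_u$ near $(x_0,t_0)$, following the classical implicit-function-theorem-free argument for free boundaries of thin obstacle type. First I would normalize: after a translation we may assume $(x_0,t_0)=(0,0)$, and by Proposition \ref{prop:coeff} the blow-up $u_{(0,0)}(x)=c_0\Ree(e_0\cdot x'+i|x_n|)^{3/2}$ has $c_0>0$ and $e_0\in\mathbb{S}^{n-1}\cap\{x_n=0\}$; rotating coordinates in $\R^{n-1}$ we take $e_0=e_{n-1}$. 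The key geometric input is the nondegeneracy of the blow-up in the tangential direction: $\partial_{x_{n-1}}u_{(0,0)}=c_0 c_n\Ree(x_{n-1}+i|x_n|)^{1/2}\ge 0$ on $\{x_n=0\}$, vanishing precisely on $\{x_{n-1}\le 0\}$, so for each small fixed direction the contact set of the blow-up is exactly a half-space.

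Next I would transfer this to $u$ itself at small scales. By Proposition \ref{prop:stability}, all free boundary points in a parabolic neighborhood $Q_{r_0}(0,0)$ lie in $\Gamma_{3/2}(u)$, and by Proposition \ref{prop:coeff} the maps $(x_0,t_0)\mapsto c_{(x_0,t_0)},\,e_{(x_0,t_0)}$ are $\theta$-Hölder, so in particular $e_{(x_0,t_0)}$ stays within a small cone of $e_{n-1}$ and $c_{(x_0,t_0)}\ge c_n>0$ throughout $Q_{r_0}$. Using the rescaled convergence estimate \eqref{eq:rescale_rate} from Theorem \ref{thm:32} together with the interior $C^{1,\alpha}$-estimate (C) (which upgrades $L^2$-closeness to $C^1_x$-closeness of $\nabla u$ away from $\{x_n=0\}$, and then up to $\{x_n=0\}$ by (C)), one obtains that on the slab $\{x_n=0\}$, for $(x',0)$ in a fixed small cylinder and at a fixed small scale $\lambda$, the tangential derivative $\partial_{x_{n-1}}u$ is strictly positive on the "$+$" side $\{x_{n-1}> \delta\}$ and $u\equiv 0$ on the "$-$" side $\{x_{n-1}<-\delta\}$, with both statements uniform as the base point varies over $\Gamma_u\cap Q_{r_0}$. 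This monotonicity in $x_{n-1}$ shows that for each fixed $(x'',t)$ the set $\{x_{n-1}: (x',0,t)\in\Lambda_u\}$ is a half-line $\{x_{n-1}\le g(x'',t)\}$, which defines the function $g$; its graph is $\Gamma_u\cap Q_r(0,0)$.

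Then I would prove regularity of $g$. Continuity of $g$ follows from the uniform nondegeneracy: if $g(x'',t)$ jumped, one could find free boundary points converging to two different heights, contradicting the thickness/monotonicity estimate; Hölder continuity of $g$ follows by a standard comparison/rescaling argument, comparing $u$ at two nearby base points and using \eqref{eq:rescale_rate} to quantify how fast the contact set aligns with the blow-up half-space. To get $\nabla'' g\in C^{\theta,\theta/2}$ I would identify the spatial gradient $\nabla'' g(x'',t)$ with the (normalized) tangential part of the direction $e_{(x_0,t_0)}$ at the free boundary point $(x_0,t_0)=((x'',g(x'',t),0),t)$: since the blow-up's contact set has inward normal $-e_0$ within $\{x_n=0\}$, the tangent plane to $\Gamma_{3/2}(u)$ at that point is orthogonal to $e_{(x_0,t_0)}$, so $\nabla'' g$ is a smooth (rational, with nonvanishing denominator because $e_{(x_0,t_0)}\cdot e_{n-1}\ge c>0$) function of $e_{(x_0,t_0)}$; the $C^{\theta,\theta/2}$ regularity of $e_{(x_0,t_0)}$ from Proposition \ref{prop:coeff} then passes to $\nabla'' g$.

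The main obstacle I anticipate is making the transition from the $L^2$-type decay \eqref{eq:rescale_rate} to a genuinely pointwise, uniform-in-base-point statement about the sign of $\partial_{x_{n-1}}u$ and the vanishing of $u$ on the two sides of the free boundary in the slab $\{x_n=0\}$ — i.e. the step where one must carefully combine the Gaussian $L^2$ convergence, the interior parabolic Hölder estimate (C), and a covering/compactness argument to get a scale $\lambda_0$ and constants that do not degenerate as the base point moves over $\Gamma_u\cap Q_{r_0}$. Equivalently, this is where one must verify that the directional monotonicity $\partial_{x_{n-1}}u\ge 0$ near the free boundary — which holds exactly in the limit — is inherited at a fixed small but positive scale. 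Once that uniform monotonicity is in hand, the graph representation and the Hölder continuity of $g$ and $\nabla'' g$ are essentially bookkeeping built on Propositions \ref{prop:stability} and \ref{prop:coeff}.
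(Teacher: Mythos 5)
Your proposal is correct in spirit but follows a genuinely different route from the paper. The paper's proof is short and avoids directional monotonicity altogether: having restricted, by Proposition~\ref{prop:stability}, to a neighborhood $Q_r$ in which every free boundary point is regular, it uses the argument of Proposition~\ref{prop:coeff} to show that the \emph{approximate} blow-up directions $e_{(x_0,t_0),\lambda}$ (defined at each fixed scale $\lambda$ as the normal of the nearest element of $\mathcal{E}_{3/2}$) are $\theta$-H\"older in the base point $(x_0,t_0)\in\Gamma_u\cap Q_r$ with constants \emph{independent of} $\lambda$. One then has a $\lambda$-parametrized family of approximating hypersurfaces $\Gamma_u^\lambda$, uniformly $C^{1,\theta}$ in the parabolic metric, whose normals at free boundary points are the $e_{(x_0,t_0),\lambda}$; sending $\lambda\to 0$ shows the limiting surface, which is $\Gamma_u\cap Q_r$, is a $C^{1,\theta}$ graph. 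Your argument, by contrast, takes the classical Athanasopoulos--Caffarelli/Petrosyan--Shi route: transfer the sign of $\partial_{x_{n-1}}u_0$ to $u$ at small scales, conclude the contact set is a sub-level set of $x_{n-1}$ so that $g$ is defined, and then read off $\nabla''g = -(e_{(x_0,t_0)})''/(e_{(x_0,t_0)}\cdot e_{n-1})$ from the blow-up normal, inheriting the H\"older continuity from Proposition~\ref{prop:coeff}. The latter identification is fine once $g$ is known to be differentiable.

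Two points of caution on your version. First, the monotonicity statement you formulate — $\partial_{x_{n-1}}u>0$ on $\{x_{n-1}>\delta\}$ and $u\equiv 0$ on $\{x_{n-1}<-\delta\}$ at a single fixed scale $\lambda$ — is by itself not enough to conclude the contact set is a half-line in $x_{n-1}$: it says nothing in the middle strip $|x_{n-1}|\le\delta$. What rescues this is that \eqref{eq:rescale_rate} together with (C) actually yields the statement at \emph{every} scale $\lambda\le\lambda_0$, uniformly over base points in $\Gamma_u\cap Q_r$; given two putative free boundary points on the same line $\{x''=\text{const},\,t=\text{const}\}$ with separation $d$, one rescales around the lower one at a scale $\lambda\sim d/\delta$ so that the upper one falls into the region $\{x_{n-1}>\delta\}$ where $u>0$, producing a contradiction. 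Second, your transfer from the Gaussian $L^2$ estimate to pointwise $C^1$-closeness on compact sets — the step you flag as the main obstacle — is indeed where the work is, but it is standard: the interior estimate (C) gives uniform $C^{1,\alpha}$ bounds on $u_{(x_0,t_0),\lambda}$ on fixed compact sets, and interpolation against \eqref{eq:rescale_rate} gives $C^1$ convergence to the blow-up, uniformly in $(x_0,t_0)$. With those two refinements your argument closes; the trade-off is that it requires more explicit work than the paper's H\"older-normal argument, but it makes the graph structure manifest rather than implicit.
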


\begin{proof}
From Proposition \ref{prop:stability} we know that there exists $r>0$ depending on $(x_0,t_0)$ such that $\Gamma_u\cap Q_r(x_0,t_0)$ consist of $\Gamma_{3/2}(u)$ free boundary points. Let $c_{(x_0,t_0),\lambda}>0$ and $e_{(x_0,t_0),\lambda}$ be the direction such that $c_{(x_0,t_0),\lambda}\Ree(x'\cdot e_{(x_0,t_0),\lambda}+i|x_n|)^{3/2}\in \mathcal{E}_{3/2}$ realizes the $L^2_{\tilde{\mu}}$ distance between $u_\lambda(\cdot, -1)$ and $\mathcal{E}_{3/2}$. Then from the proof for Proposition \ref{prop:coeff} we have that for each $\lambda$ sufficiently small,
\begin{equation*}
|e_{(x_0,t_0),\lambda}-e_{(y_0,s_0),\lambda}|\leq C d((x_0,t_0),(y_0,s_0))^\theta.
\end{equation*}
for any $(x_0,t_0), (y_0,s_0)\in \Gamma_u\cap Q_r(x_0,t_0)$, and $C>0$ independent of $\lambda$. Thus we find a parameter family of hypersurfaces $\Gamma_u^\lambda$, the normals of which are spacial and equal to $e_{(x_0,t_0),\lambda}$ at each $(x_0,t_0)\in \Gamma_u^\lambda$, and they are uniformly $C^\theta$ regular with respect to the parabolic distance. Passing to the limit as $\lambda\rightarrow 0$ we thus obtain that the limiting hypersurface, which is the free boundary $\Gamma_u\cap Q_r(x_0,t_0)$, is a $C^\theta$ hypersurface. Thus up to a rotation of the spacial cooridnates, it can be represented as the graph $x_{n-1}=g(x'',t)$ for some function $g$, where $\nabla '' g\in C^{\theta,\theta/2}$.
\end{proof}

\subsection{The case $\kappa=2m$.}
\subsubsection{Uniqueness and nondegeneracy}
We first prove Theorem \ref{thm:2m} by using decay estimate of the Weiss energy in Corollary \ref{cor:decay_rate_2m}.
\begin{proof}[Proof for Theorem \ref{thm:2m}]
Let $\tilde{u}=\tilde{u}_{2m}$ be the $2m$ conformal normalized solution as in Lemma \ref{lem:coordinates}.  The upper bound on $H_u(r)$ implies that $W_{2m}(\tilde{u}(\tau))\geq 0$ for all $\tau\in (0,\infty)$. In fact, if $W_{2m}(\tilde{u}(\tau_0))<0$ for some $\tau_0>0$, then by Corollary \ref{cor:decay_rate_2m}, there exist $\gamma_m\in (0,1)$ and $C_0>0$ such that $\|\tilde{u}(\tau)\|_{L^2_\mu}^2 \geq C_0e^{\gamma_m (\tau-\tau_0)}$ for each $\tau>\tau_0$. Back in the original coordinates we have $\int_{\R^n_+} u^2(x,t)G(x,t)dx\geq C(\sqrt{-t})^{4m-2\gamma_m}$ for some $C>0$ depending on $\tau_0$ and for each $t\in (-e^{-\tau_0}, 0)$. This is however a contradiction to our assumption on $H_u$ when $|t|$ is sufficiently small.
Thus one can apply Corollary \ref{cor:decay_rate_2m} (ii) to $\tilde{u}(\tau)/\|\tilde{u}(0)\|_{L^2_\mu}$ and conclude that there is a unique non-zero $p(y)\in \mathcal{E}_{2m}^+$ such that
$$\|\tilde{u}(\tau)-p\|_{L^2_\mu}\rightarrow 0, \quad 0\leq \|\tilde{u}(\tau)\|^2_{L^2_\mu}-\|p\|_{L^2_\mu}^2\leq \frac{C\|\tilde{u}(0)\|_{L^2_\mu}^2}{|\ln\tau|} \text{ as } \tau\rightarrow \infty.$$
Let $p_0(x,t):=(\sqrt{-t})^\kappa p(\frac{x}{2\sqrt{-t}})$. Writing the above inequality by the original variables and by \eqref{eq:kappa}, we obtain the desired estimate.
\end{proof}

\subsubsection{Frequency gap}
By the Almgren-Poon's monotonicity formula for solutions to the parabolic Signorini problem (with $f=0$), each free boundary point $(x_0,t_0)\in \Gamma_u$ can be associated with a frequency $\kappa_{(x_0,t_0)}\in \{3/2\}\cup [2,\infty)$. Furthermore, blow-ups at $(x_0,t_0)$ are parabolic $\kappa_{(x_0,t_0)}$-homogeneous solutions, cf. \cite{DGPT}. Another consequence of the epiperimetric inequality is the gap of the frequency around $2m$, $m\in \N_+$.

\begin{prop}[Frequency gap]
Let $u: S_2^+\rightarrow \R$ be a solution to the parabolic Signorini problem \eqref{eq:parabolic} with $f=0$. Assume that $u$ satisfies assumptions (A)-(C). Then there exist positive constants $c_-$ and $c_+$ depending on $m,n$ such that
\begin{equation*}
\{(x,t)\in \Gamma_u: \kappa_{(x,t)}\in (2m-c_-,2m+c_+)\setminus\{2m\}\}=\emptyset.
\end{equation*}
\end{prop}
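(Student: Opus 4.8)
\emph{Setup.} After a parabolic translation and rescaling we may assume $(x_0,t_0)=(0,0)$; this changes neither the frequency nor membership in the class (A)--(C) (with possibly different constants, which is harmless for a gap statement). Let $\tilde u:=\tilde u_{2m}$ be the $2m$-normalized solution of \eqref{eq:bulk_conf}--\eqref{eq:boundary_conf} attached to $u$ as in Lemma \ref{lem:coordinates}; it satisfies \eqref{eq:sob_conf} by Remark \ref{rmk:Gaussian_bound}, and by the change of variables in that remark $\|\tilde u(\tau)\|_{L^2_\mu}^2$ equals a fixed dimensional constant times $e^{2m\tau}\int_{\R^n_+}u(x,-e^{-\tau})^2 G(x,-e^{-\tau})\,dx$. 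The plan is to combine this dictionary with the decay dichotomy of Corollary \ref{cor:decay_rate_2m} and the convexity of $\tau\mapsto\|\tilde u(\tau)\|_{L^2_\mu}^2$ (Corollary \ref{cor:Weiss}), together with $W_{2m}(\tilde u(\tau))=-\tfrac12\partial_\tau\|\tilde u(\tau)\|_{L^2_\mu}^2$ from \eqref{eq:w2}. By the Almgren--Poon monotonicity formula (\cite{DGPT}) the frequency $\kappa:=\kappa_{(0,0)}$ is the exact vanishing order of $u$ at the origin: for every $\epsilon>0$ there are constants with $c_\epsilon(\sqrt{-t})^{2\kappa+\epsilon}\le\int_{\R^n_+}u(x,t)^2 G(x,t)\,dx\le C_\epsilon(\sqrt{-t})^{2\kappa-\epsilon}$, i.e. $c_\epsilon e^{(2m-\kappa-\epsilon)\tau}\le\|\tilde u(\tau)\|_{L^2_\mu}^2\le C_\epsilon e^{(2m-\kappa+\epsilon)\tau}$. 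We will show $\kappa<2m\Rightarrow\kappa\le 2m-\gamma_m$ and $\kappa>2m\Rightarrow\kappa\ge 2m+2\delta_0$, which gives the proposition with $c_-=\gamma_m$, $c_+=2\delta_0$ ($\gamma_m,\delta_0$ as in Corollary \ref{cor:decay_rate_2m}, depending only on $m,n$).

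\emph{The case $\kappa<2m$.} Choosing $\epsilon$ with $\kappa+\epsilon<2m$ in the lower bound, $\|\tilde u(\tau)\|_{L^2_\mu}^2\to\infty$. A convex function on $[0,\infty)$ tending to $+\infty$ cannot be non-increasing, so there is $\tau_0$ with $W_{2m}(\tilde u(\tau_0))=-\tfrac12\partial_\tau\|\tilde u(\tau_0)\|_{L^2_\mu}^2<0$. Applying Corollary \ref{cor:decay_rate_2m}(i) to the time-shifted solution $\tilde u(\tau_0+\cdot)$ yields $\|\tilde u(\tau)\|_{L^2_\mu}^2\ge c\,e^{\gamma_m\tau}$ for $\tau$ large, with $c>0$. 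Translating back, $\int_{\R^n_+}u(x,t)^2G\,dx\ge c'(\sqrt{-t})^{4m-2\gamma_m}$, so the vanishing order is at most $2m-\gamma_m$; hence $\kappa\le 2m-\gamma_m$, and no free boundary point has frequency in $(2m-\gamma_m,2m)$.

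\emph{The case $\kappa>2m$.} Now $\|\tilde u(\tau)\|_{L^2_\mu}^2\to 0$; being convex and tending to $0$, it is non-increasing on $[0,\infty)$, so $W_{2m}(\tilde u(\tau))\ge 0$ for all $\tau$. If there is a time $\tau^*$ with $W_{2m}(\tilde u(\tau^*))\le\delta_0\|\tilde u(\tau^*)\|_{L^2_\mu}^2$, set $\bar u(\tau):=\tilde u(\tau^*+\tau)/\|\tilde u(\tau^*)\|_{L^2_\mu}$, again a solution of \eqref{eq:bulk_conf}--\eqref{eq:boundary_conf} satisfying \eqref{eq:sob_conf}, with $\|\bar u(0)\|_{L^2_\mu}=1$, $W_{2m}(\bar u(\tau))\ge 0$ for all $\tau\ge 0$, and $W_{2m}(\bar u(0))\le\delta_0$; Corollary \ref{cor:decay_rate_2m}(ii) then produces a \emph{nonzero} limit $\bar u(\infty)$ with $\|\bar u(\tau)\|_{L^2_\mu}^2\to\|\bar u(\infty)\|_{L^2_\mu}^2>0$, contradicting $\|\bar u(\tau)\|_{L^2_\mu}^2=\|\tilde u(\tau^*+\tau)\|_{L^2_\mu}^2/\|\tilde u(\tau^*)\|_{L^2_\mu}^2\to 0$. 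Therefore no such $\tau^*$ exists, i.e. $W_{2m}(\tilde u(\tau))>\delta_0\|\tilde u(\tau)\|_{L^2_\mu}^2$ for every $\tau$, which rewrites as $\partial_\tau\log\|\tilde u(\tau)\|_{L^2_\mu}^2<-2\delta_0$ and yields $\|\tilde u(\tau)\|_{L^2_\mu}^2\le e^{-2\delta_0\tau}\|\tilde u(0)\|_{L^2_\mu}^2$. Translating back, $\int_{\R^n_+}u(x,t)^2G\,dx\le C(\sqrt{-t})^{4m+4\delta_0}$, so the vanishing order is at least $2m+2\delta_0$; hence $\kappa\ge 2m+2\delta_0$, and no free boundary point has frequency in $(2m,2m+2\delta_0)$.

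\emph{Conclusion and main difficulty.} Combining the two cases, $\kappa_{(x,t)}\in(2m-\gamma_m,2m+2\delta_0)$ forces $\kappa_{(x,t)}=2m$, which is the claim with $c_-=\gamma_m$ and $c_+=2\delta_0$. The only genuinely delicate ingredient is the dictionary of the first paragraph: on the one hand turning ``the frequency at the point equals $\kappa$'' into the two-sided, $\epsilon$-loss bounds on $\int_{\R^n_+}u(\cdot,t)^2G$ (equivalently on $\|\tilde u(\tau)\|_{L^2_\mu}^2$), and on the other hand converting an exponential growth or decay rate of $\|\tilde u(\tau)\|_{L^2_\mu}^2$ back into an inequality for the frequency. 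This rests on the (truncated) Almgren--Poon monotonicity formula of \cite{DGPT} and on the elementary relation between the space--time average $H_u$ used to define the frequency and the spatial average appearing in $\|\tilde u(\tau)\|_{L^2_\mu}^2$; once it is in place, the rest is a direct application of Corollary \ref{cor:decay_rate_2m} and the convexity of $\tau\mapsto\|\tilde u(\tau)\|_{L^2_\mu}^2$.
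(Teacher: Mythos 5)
Your proof is correct, but it takes a genuinely different route from the paper's. The paper's argument is more algebraic: it applies the epiperimetric inequalities of Propositions~\ref{prop:negative_Weiss} and~\ref{prop:homo2m2} directly to the \emph{homogeneous blow-up limit} $v$ at the free boundary point. In conformal coordinates such a profile is $\tilde u_{2m}(y,\tau)=e^{\mp\epsilon\tau/2}v(y)$, so the Weiss energy along it is a pure exponential $e^{\mp\epsilon\tau}W_{2m}(v)$ with explicit sign and magnitude $W_{2m}(v)=\pm\frac{\epsilon}{2}\|v\|_{L^2_\mu}^2$; plugging this into the discrete decay/growth inequalities and evaluating at $\tau=0$ produces a scalar inequality in $\epsilon$ alone, which forces $\epsilon$ to be bounded below. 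Your argument, by contrast, stays with the original solution: you convert the two-sided Almgren--Poon frequency bounds at the point into $L^2_\mu$ growth/decay dichotomies for $\|\tilde u(\tau)\|^2$, then use convexity of $\tau\mapsto\|\tilde u(\tau)\|^2$ and the identity $W_{2m}=-\frac12\partial_\tau\|\tilde u\|^2$ to locate either a time with negative Weiss energy (forcing exponential growth via Corollary~\ref{cor:decay_rate_2m}(i)) or to exclude any time with small normalized Weiss energy (forcing exponential decay with rate $2\delta_0$ via Corollary~\ref{cor:decay_rate_2m}(ii)). The two approaches yield the same $c_-=\gamma_m$; for $c_+$ the paper's computation yields an $\epsilon_0$ from a transcendental inequality while yours yields the cleaner $c_+=2\delta_0$.

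What your approach buys is that it avoids any explicit use of a blow-up limit; what it costs is the ``dictionary'' step, which you rightly flag as the delicate point. Let me be a bit more precise about that gap: the Almgren--Poon monotonicity in \cite{DGPT} controls the space-time average $H_u(r)$, not directly the spatial slice $h(t):=\int_{\R^n_+}u(\cdot,t)^2G\,dx$. The pointwise two-sided bound on $h(t)$ that you write down is not literally a statement of \cite{DGPT}, but what you actually need is weaker and follows quickly from convexity: if $\kappa>2m$ and $\lim_\tau\|\tilde u(\tau)\|^2=:L>0$ (the limit exists by convexity and nonnegativity), then $h(t)\geq cL(\sqrt{-t})^{4m}$ so $H_u(r)\gtrsim r^{4m}$, contradicting $H_u(r)\leq C_\epsilon r^{2\kappa-\epsilon}$; hence $L=0$. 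Symmetrically, if $\kappa<2m$ and $\|\tilde u(\tau)\|^2$ were bounded, then $H_u(r)\lesssim r^{4m}$, contradicting $H_u(r)\geq c_\epsilon r^{2\kappa+\epsilon}$; since $\|\tilde u\|^2$ is convex and unbounded it must tend to $+\infty$. With those two facts in hand, the remainder of your argument (locating a negative Weiss energy time in the first case, and excluding any $\tau^*$ with $W_{2m}(\tilde u(\tau^*))\leq\delta_0\|\tilde u(\tau^*)\|^2$ in the second) is complete and correct, and is carried out precisely as you wrote.

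One small remark: in the second case, once you know $W_{2m}(\tilde u(\tau))>\delta_0\|\tilde u(\tau)\|^2$ for all $\tau$, the step from the resulting decay $h(t)\leq C(\sqrt{-t})^{4m+4\delta_0}$ back to $\kappa\geq 2m+2\delta_0$ again uses the lower $H_u$-bound with arbitrary $\epsilon$-loss; this is the same dictionary and is fine, but it is worth stating that you are comparing the derived bound to the $H_u$ lower bound of \cite{DGPT} rather than to a pointwise lower bound on $h(t)$.
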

\begin{proof}
\emph{(i).} Assume that $(0,0)\in \Gamma_u$ is a free boundary point with the frequency $2m+\epsilon$ for some $\epsilon>0$. Let $u(x,t)$ be a non-trivial $2m+\epsilon$ parabolically homogeneous blow-up limit at $(0,0)$. In the conformal coordinate it is of the form $e^{-(m+\epsilon/2)\tau} v(y)$ for some non-trivial function $v$, which solves the stationary Signorini problem
\begin{equation*}
\begin{split}
\mathcal{L}_{2m}v + \frac{\epsilon}{2}v &=0\text{ in }\R^{n}_+,\\
v\geq 0, \ \p_n v\leq 0, \ v\p_nv & =0 \text{ on } \{y_n=0\}.
\end{split}
\end{equation*}
 We consider the $2m$-normalized solution $\tilde{u}_{2m}(y,\tau)= e^{m\tau}u(x(y,\tau), t(y,\tau))=e^{-\epsilon \tau/2}v(y)$ as in Lemma \ref{lem:coordinates}. By Proposition \ref{prop:homo2m2},
\begin{equation*}
W_{2m}(\tilde{u}_{2m}(\tau+1))\leq \left(1-c_0 W_{2m}(\tilde{u}_{2m}(\tau+1))\left|\ln W_{2m}(\tilde{u}_{2m}(\tau+1))\right|^2\right)W_{2m}(\tilde{u}_{2m}(\tau)).
\end{equation*}
Note that $W_{2m}(\tilde{u}(\tau))=e^{-\epsilon\tau}W_{2m}(v)$ and $W_{2m}(v)=\epsilon\|v\|_{L^2_\mu}^2/2$. Thus the above inequality can be rewritten as (after dividing by $W_{2m}(v)$)
\begin{equation*}
e^{-\epsilon(\tau+1)}\leq \left(1-c_0 e^{-\epsilon(\tau+1)} (\epsilon\|v\|_{L^2_\mu}^2/2)\left|-\epsilon(\tau+1)+\ln(\epsilon\|v\|_{L^2_\mu}^2/2)\right|^2\right)e^{-\epsilon\tau}.
\end{equation*}
Evaluating at $\tau=0$ yields
\begin{equation*}
e^{-\epsilon}\leq 1- \tilde{c} e^{-\epsilon} \epsilon \left|-\epsilon+\ln \epsilon + \ln (\tilde{c}/c_0)\right|^2 ,\quad \tilde{c}:=\frac{c_0\|v\|^{2}_{L^2_\mu}}{2}.
\end{equation*}
Then necessarily $\epsilon\geq \epsilon_0$ (thus $c_+\geq \epsilon_0$) for some $\epsilon_0>0$ depending on $\tilde{c}$.

\emph{(ii).} Assume that $(0,0)$ is a free boundary point with frequency $2m-\epsilon$, $\epsilon>0$. Then $\epsilon>\gamma_m$, where $\gamma_m\in(0,1)$ is the constant from Corollary \ref{cor:decay_rate_2m} (i). Indeed, similar as in (i) we consider a nontrivial blow-up limit at $(0,0)$. After  $2m$-normalization and in the conformal coordinates this leads to $\tilde{u}_{2m}(y,\tau)=e^{\epsilon\tau/2} v(y)$, where $v$ solves
\begin{equation*}
\begin{split}
\mathcal{L}_{2m}v-\frac{\epsilon}{2}v &=0 \text{ in }\R^{n}_+,\\
v\geq 0, \ \p_n v\leq 0, \ v\p_nv & =0 \text{ on } \{y_n=0\}.
\end{split}
\end{equation*}
Thus one has
\begin{equation*}
W_{2m}(\tilde{u}_{2m}(\tau))=-\frac{\epsilon}{2}e^{\epsilon\tau}\|v\|_{L^2_\mu}^2<0,\quad \tau\in (0,\infty).
\end{equation*}
By \eqref{eq:growth_2m}, $W_{2m}(\tilde{u}_{2m}(\tau))\leq e^{\gamma_m\tau}(-\frac{\epsilon}{2}\|v\|_{L^2_\mu})$, which implies that  $\epsilon\geq \gamma_m>0$.
\end{proof}

\section{Perturbation}\label{sec:perturbation}
In this section we show how to modify our proof in Section \ref{sec:dynamical} to the nonzero inhomogeneity setting. We consider global solutions which satisfies (A)-(C) to
\begin{equation*}
\begin{split}
\p_tu-\Delta u &=f \text{ in } S_2^+\\
 u \geq 0, \ \p_nu\leq 0, \ u\p_nu &=0 \text{ on } S'_2,
\end{split}
\end{equation*}
where $f=f(x,t)\in L^\infty(S_2^+)$. By chapter 4 of \cite{DGPT}, the study of local solutions with nonzero obstacles can be reduced to the study of global solutions to the above inhomogeneous equations by subtracting the obstacle and applying suitable cut-offs.

Assume $(0,0)\in \Gamma_u$ is a free boundary point of frequency $\kappa$. Under a similar conformal change of variables around $(0,0)$ as before (cf. Lemma \ref{lem:coordinates}) we have that $\tilde{u}_\kappa$ solves
\begin{equation}\label{eq:inhomogeneous}
\begin{split}
&\p_\tau \tilde{u}_\kappa +\frac{y}{2}\cdot \nabla \tilde{u}_\kappa-\frac{1}{4}\Delta \tilde{u}_\kappa-\frac{\kappa}{2}\tilde{u}_\kappa= e^{\tau(\kappa/2-1)} \tilde{f} \text{ in } \R^n_+\times [0,\infty),\\
&\tilde{u}_\kappa\geq 0, \ \p_n\tilde{u}_\kappa\leq 0, \ \tilde{u}_\kappa \p_n\tilde{u}_\kappa=0 \text{ on } \{y_n=0\},
\end{split}
\end{equation}
where $\tilde{f}(y,\tau):=f(2e^{-\tau/2} y, -e^{-\tau})$.
Note that up to a dimensional constant $$M_{\tilde{f}}:=\|\tilde{f}\|_{L^\infty([0,\infty); L^2_\mu)}=\sup_{t\in [-1,0)}\left(\int_{\R^n_+}|f(x,t)|^2 G(x,t) dx\right)^{1/2}\leq \|f\|_{L^\infty(S_1^+)}.$$

\vspace{5pt}

We first consider the case $\kappa=3/2$ and denote $\tilde{u}:=\tilde{u}_{3/2}$. Let $W(\tilde{u}(\tau)):=W_{3/2}(\tilde{u}(\tau))$ be the Weiss energy (defined as in \eqref{eq:Weiss_new}) associated with the solution $\tilde{u}$. A direct computation as in Lemma \ref{lem:Weiss_conf} gives that $\tau\mapsto W(\tilde{u}(\tau))$ satisfies the almost monotone decreasing property: for any $0<\tau_a<\tau_b<\infty$
\begin{align}
\label{eq:der_a}
\begin{split}
W(\tilde{u}(\tau_b))-W(\tilde{u}(\tau_a))
&\leq -2\int\limits_{\R^n_+\times [\tau_a, \tau_b]} |\p_\tau \tilde{u}|^2 d\mu d\tau+ 2 \int\limits_{\R^n_+\times [\tau_a, \tau_b]} e^{-\tau/4} \p_\tau\tilde{u} \tilde{f} d\mu d\tau \\
&\leq -  \int\limits_{\R^n_+\times [\tau_a,\tau_b]}|\p_{\tau} \tilde{u}|^2 d \mu d\tau +\int\limits_{\R^n_+\times [\tau_a, \tau_b]}  e^{-\tau / 2} \tilde{f}^2 d\mu d\tau.
\end{split}
\end{align}

Relying on this almost monotonicity, we seek to prove the following contraction result:

\begin{prop}\label{prop:perturb}
Let $\tilde{u}$ be a solution to \eqref{eq:inhomogeneous} with $\kappa=3/2$ and $\tilde{u}$ satisfies \eqref{eq:sob_conf}.
Then there exists a universal constant $c_0\in (0,1/4)$ such that
\begin{equation*}
W(\tilde{u}(\tau+ 1))\leq (1-c_0)W(\tilde{u}(\tau)) + 2 e^{-\tau/2}M_{\tilde{f}}^2 \text{ for any }\tau>0.
\end{equation*}
\end{prop}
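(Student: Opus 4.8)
The plan is to obtain the inhomogeneous estimate by perturbing the contradiction argument behind Proposition~\ref{prop:epi}, treating the forcing $e^{-\tau/4}\tilde f$ as a lower-order term in every energy identity. First I would reduce to the case $W(\tilde u(\tau))>0$: if $W(\tilde u(\tau))\le 0$, then the almost-monotonicity \eqref{eq:der_a} already gives $W(\tilde u(\tau+1))\le W(\tilde u(\tau))+e^{-\tau/2}M_{\tilde f}^2\le (1-c_0)W(\tilde u(\tau))+2e^{-\tau/2}M_{\tilde f}^2$ for every $c_0\in(0,1)$, using $-c_0W(\tilde u(\tau))\ge 0$. With $W(\tilde u(\tau))>0$ I argue by contradiction: assume there exist $c_j\downarrow 0$, solutions $\tilde u_j$ of \eqref{eq:inhomogeneous} with $\kappa=3/2$ satisfying \eqref{eq:sob_conf}, and times $\tau_j>0$ with $W(\tilde u_j(\tau_j))>0$ and $W(\tilde u_j(\tau_j+1))>(1-c_j)W(\tilde u_j(\tau_j))+2\rho_j$, where $\rho_j:=e^{-\tau_j/2}M_{\tilde f_j}^2$. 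Inserting \eqref{eq:der_a} on $I_j:=[\tau_j,\tau_j+1]$ — where $\int_{I_j}\int_{\R^n_+}e^{-\tau/2}\tilde f_j^2\,d\mu\,d\tau\le\rho_j$ — together with the almost-monotonicity of $\tau\mapsto W(\tilde u_j(\tau))$, and proceeding exactly as in step~(i) of Proposition~\ref{prop:epi}, I would extract $\int_{I_j}\|\p_\tau\tilde u_j\|_{L^2_\mu}^2\,d\tau<c_jW(\tilde u_j(\tau_j))$, $\rho_j<c_jW(\tilde u_j(\tau_j))$, and $\tfrac12 W(\tilde u_j(\tau_j))\le\int_{I_j}W(\tilde u_j(\tau))\,d\tau\le 2W(\tilde u_j(\tau_j))$; in particular $\rho_j\le 2c_j\int_{I_j}W(\tilde u_j)$.

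Next I would redo the bookkeeping of step~(ii) of Proposition~\ref{prop:epi} with the forcing present. Writing $\tilde u=\lambda(\tau)h_{e(\tau)}+\tilde v(\tau)$ as in \eqref{eq:decom}, the identity \eqref{eq:weissv} still holds (it uses only the decomposition, \eqref{eq:orth1}, and $\tilde u\ge 0$ on $\{y_n=0\}$), so $W(\tilde v)\le W(\tilde u)$. The identity \eqref{eq:w2} now reads $-\tfrac12\p_\tau\|\tilde u\|_{L^2_\mu}^2=W(\tilde u)-e^{-\tau/4}\int_{\R^n_+}\tilde u\tilde f\,d\mu$ (multiply \eqref{eq:inhomogeneous} by $\tilde u$, integrate against $d\mu$, and use $\tilde u\p_n\tilde u=0$); inserting $\p_\tau\tilde u=\mathcal{L}\tilde u+e^{-\tau/4}\tilde f$ with $\mathcal{L}h_e=0$ into $-\tfrac12\p_\tau\|\tilde u\|^2=-\int_{\R^n_+}(\lambda h_e+\tilde v)\p_\tau\tilde u\,d\mu$ and integrating by parts, the $\lambda h_e$-part of the forcing cancels against part of the $\int\tilde u\tilde f$ term, leaving precisely the analogue of \eqref{eq:w3232} with the single extra summand $\int_{I_j}e^{-\tau/4}\int_{\R^n_+}\tilde v_j\tilde f_j\,d\mu\,d\tau$. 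By Cauchy--Schwarz this summand is $\le(2\rho_j)^{1/2}\big(\int_{I_j}\|\tilde v_j\|_{L^2_\mu}^2\big)^{1/2}\le 2\big(c_j\int_{I_j}W(\tilde u_j)\big)^{1/2}\big(\int_{I_j}\|\tilde v_j\|_{L^2_\mu}^2\big)^{1/2}$, so it has exactly the size of the term handled in \eqref{eq:dtauv} and can be absorbed by the same Young's inequality step. Running through \eqref{eq:est_W_up}--\eqref{eq:weissu} (the residual boundary integral being $\le 0$ by the Signorini conditions), I would then obtain $\int_{I_j}W(\tilde v_j)\le Cc_j\int_{I_j}\|\tilde v_j\|_{L^2_\mu}^2$ and $W(\tilde u_j(\tau))\le C\int_{I_j}\|\tilde v_j\|_{L^2_\mu}^2$ for all $\tau\in I_j$, for a universal $C$; combined with $\rho_j\le 2c_j\int_{I_j}W(\tilde u_j)$ this yields the crucial smallness $\rho_j\le Cc_j\int_{I_j}\|\tilde v_j\|_{L^2_\mu}^2$.

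The remaining two steps are copies of steps~(iii)--(iv) of Proposition~\ref{prop:epi}. The rescaling $\hat u_j(y,\tau):=\tilde u_j(y,\tau_j+\tau)/\|\tilde u_j\|_{L^2(I_j;L^2_\mu)}$ solves \eqref{eq:inhomogeneous} with the additional forcing $g_j(y,\tau):=e^{-(\tau_j+\tau)/4}\tilde f_j(y,\tau_j+\tau)/\|\tilde u_j\|_{L^2(I_j;L^2_\mu)}$, and since $\|\tilde v_j\|_{L^2_\mu}\le\|\tilde u_j\|_{L^2_\mu}$ (because $0\in\mathcal{E}_{3/2}$) one has $\|g_j\|_{L^2([0,1];L^2_\mu)}^2\le\rho_j/\|\tilde u_j\|_{L^2(I_j;L^2_\mu)}^2\le Cc_j\to 0$. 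Hence, as in step~(iii), $\hat u_j$ converges (weakly in $L^2([0,1];W^{1,2}_\mu)$, strongly in $C([0,1];L^2_\mu)$, locally in $C^1_xC^0_t$) to a stationary solution $\hat u_0=c_nh(y'\cdot e_0,y_n)\in\mathcal{E}_{3/2}$, the forcing disappearing from the limiting variational inequality. Choosing $\hat\tau_j\in I_j$ with $\|\tilde v_j(\hat\tau_j)\|_{L^2_\mu}^2=\int_{I_j}\|\tilde v_j\|_{L^2_\mu}^2$ and setting $\hat w_j(y,\tau):=\big(\tilde u_j(y,\tau_j+\tau)-\lambda_j(\hat\tau_j)h_{e_j(\hat\tau_j)}(y)\big)/\|\tilde v_j(\hat\tau_j)\|_{L^2_\mu}$, the same equation holds for $\hat w_j$ (since $\mathcal{L}h_{e_j(\hat\tau_j)}=0$ and it is $\tau$-independent) with forcing of squared $L^2$-norm $\le\rho_j/\int_{I_j}\|\tilde v_j\|_{L^2_\mu}^2\le Cc_j\to 0$; as in step~(iv), $\hat w_j$ converges to a nonzero stationary solution $\hat w_0$ obeying the Dirichlet--Neumann conditions \eqref{eq:D-N}, so by the characterization of the second Dirichlet--Neumann eigenspace (the Appendix) together with the orthogonality relations \eqref{eq:orth1} and \eqref{eq:orth22} one gets $\hat w_0=\lambda_nh$ with $\lambda_n\neq0$ after a rotation, contradicting that $\mathcal{E}_{3/2}$ has been projected out of $\tilde u_j$ at every time.

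The only genuinely new ingredient compared with Proposition~\ref{prop:epi}, and the point I expect to demand the most care, is the uniform bookkeeping that every error term generated by $\tilde f$ is either of the form $\rho_j^{1/2}\big(\int_{I_j}\|\tilde v_j\|_{L^2_\mu}^2\big)^{1/2}$ in the energy identities or, after the renormalizations of steps~(iii)--(iv), of size $O(c_j)$. This rests entirely on the two bounds $\rho_j\le 2c_j\int_{I_j}W(\tilde u_j)$ (from the contradiction assumption and \eqref{eq:der_a}) and $W(\tilde u_j(\tau))\le C\int_{I_j}\|\tilde v_j\|_{L^2_\mu}^2$ on $I_j$ (from the Weiss bookkeeping), which together give the smallness $\rho_j\le Cc_j\int_{I_j}\|\tilde v_j\|_{L^2_\mu}^2$ needed to kill the forcing in the compactness limit.
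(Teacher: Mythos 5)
Your proof is correct and takes essentially the same route as the paper's: argue by contradiction, show that the forcing contribution $\rho_j:=e^{-\tau_j/2}M_{\tilde f_j}^2$ is itself dominated by $c_j\int_{I_j}W(\tilde u_j)$ (and hence by $Cc_j\int_{I_j}\|\tilde v_j\|_{L^2_\mu}^2$ after the Weiss bookkeeping), insert the single extra summand $\int_{I_j}e^{-\tau/4}\int\tilde v_j\tilde f_j$ of the same size as \eqref{eq:dtauv} into the analogue of \eqref{eq:w3232}, and then pass to the limit as in steps (iii)--(iv) of Proposition \ref{prop:epi} with the renormalized forcing vanishing. The only cosmetic difference is your preliminary reduction to $W(\tilde u(\tau))>0$, which is harmless but unnecessary since the contradiction inequality combined with \eqref{eq:der_a} already forces $W(\tilde u_j(\tau_j+1))>0$.
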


The argument for this contraction is similar as before. Instead of providing the full details, we only outline the proof and point out the differences.

\begin{proof}[Proof of Proposition \ref{prop:perturb}]
\emph{(i).} Assume not, then there exists $c_j\in (0,1/4)$, $c_j\rightarrow 0$, solutions $\tilde{u}_j$ to \eqref{eq:inhomogeneous} with inhomogeneity $\tilde{f}_j$
and times $\tau_j$ such that
\begin{equation*}
W(\tilde{u}_j(\tau_j+1))\geq (1-c_j)W(\tilde{u}_j(\tau_j)) + 2 e^{-\tau_j/2} M_{\tilde f_j}^2 .
\end{equation*}
In the sequel for notational simplicity we write $\tilde{u}$ and $\tilde{f}$ instead of $\tilde{u}_j$ and $\tilde{f}_j$.
Then, using \eqref{eq:der_a}, one has
\begin{align*}
c_jW(\tilde{u}(\tau_j+1))&\geq (1-c_j)\left[W(\tilde{u}(\tau_j))- W(\tilde{u}(\tau_j+1))\right]+2e^{-\tau_j/2}M_{\tilde{f}}^2\\
&\geq (1-c_j) \int\limits_{\tau_j}^{\tau_j +1}\left( \|\p_{\tau} \tilde{u}\|_{L^2_{\mu}}^2
-e^{-\tau/2} \|\tilde{f}\|_{L^2_{\mu}}^2\right) d\tau +2e^{-\tau_j/2}M_{\tilde{f}}^2\\
&\geq (1-c_j)\int\limits_{\tau_j}^{\tau_j +1} \|\p_{\tau} \tilde{u}\|_{L^2_{\mu}}^2 d\tau+ e^{-\tau_j/2}M_{\tilde{f}}^2.
\end{align*}
By the almost monotonicity \eqref{eq:der_a}, for any $\tau\in I_j:=[\tau_j,\tau_j+1]$,
\begin{equation*}
c_jW(\tilde{u}(\tau))\geq c_jW(\tilde{u}(\tau_j+1))-c_je^{-\tau_j/2}M_{\tilde{f}}^2\geq (1-c_j)\left[\int_{I_j} \|\p_{\tau} \tilde{u}\|_{L^2_{\mu}}^2 d\tau+ e^{-\tau_j/2}M_{\tilde{f}}^2\right].
\end{equation*}
Therefore, we obtain
\begin{equation}\label{eq:contra_a}
\int_{I_j} \|\p_{\tau} \tilde{u}\|_{L^2_{\mu}}^2 d\tau+ e^{-\tau_j/2}M_{\tilde{f}}^2\leq 2c_j\int_{I_j}W(\tilde{u}(\tau))d\tau.
\end{equation}

\emph{(ii).} We seek to estimate the Weiss energy for the error term $\tilde{v}_j$. We remark that the relation \eqref{eq:weissv} still holds for the inhomogeneous problem. First, we can write
\begin{equation*}
\begin{split}
&\int_{I_j}W(\tilde{u}(\tau))d\tau=\int\limits_{\R^n_+\times I_j} \mathcal{L}\tilde{u}\tilde{u} d\mu d\tau\\
&=\int\limits_{\R^n_+\times I_j} (-\p_\tau \tilde{u}+ e^{-\tau/4}\tilde{f})\tilde{v} d\mu d\tau +\int\limits_{I_j} \frac{\lambda_j}{2}\int\limits_{\{y_n=0\}}\left(\p_n\tilde{u} h_{e_j}-\tilde{u}\p_nh_{e_j}\right)d\mu  d\tau\\
&\leq\int_{I_j}\left(\|\p_\tau\tilde{u}\|_{L^2_{\mu}}+e^{-\tau/4}\|\tilde{f}\|_{L^2_\mu}\right)\|\tilde{v}\|_{L^2_\mu}d\tau +\int\limits_{I_j}\frac{\lambda_j}{2}\int\limits_{\{y_n=0\}}\left(\p_n\tilde{u} h_{e_j}-\tilde{u}\p_nh_{e_j}\right)d\mu d\tau.
\end{split}
\end{equation*}
Here as in the zero homogeneity case, $\mathcal{L}:=\frac{1}{4}\Delta-\frac{y}{2}\cdot \nabla +\frac{3}{4}$.
Applying \eqref{eq:contra_a} to the first term on the RHS we obtain
\begin{equation*}
\begin{split}
\int_{I_j}W(\tilde{u}(\tau))d\tau &\leq \left(2c_j\int_{I_j}W(\tilde{u}(\tau)d\tau \right)^{1/2}\left(\int_{I_j}\|\tilde{v}\|_{L^2_\mu}^2 d\tau \right)^{1/2}\\
&\quad +\int\limits_{I_j}\frac{\lambda_j}{2}\int\limits_{\{y_n=0\}}\left(\p_n\tilde{u} h_{e_j}-\tilde{u}\p_nh_{e_j}\right)d\mu d\tau.
\end{split}
\end{equation*}
By Cauchy-Schwartz and applying \eqref{eq:weissv} to replace $W(\tilde{u})$ by $W(\tilde{v})$ we get,
\begin{equation*}
\begin{split}
\int_{I_j}W(\tilde{v}(\tau)) d\tau \leq 20 c_j\int_{I_j}\|\tilde{v}\|^2_{L^2_\mu} d\tau + 4\int_{I_j}\int_{\{y_n=0\}}\left(\frac{\lambda_j}{4}\p_n\tilde{u} h_{e_j}+ \frac{\lambda_j}{8} \tilde{u}\p_nh_{e_j}\right)d\mu d\tau .
\end{split}
\end{equation*}
Noting that the term involving integral on $\{y_n=0\}$ is non-positive by the Signorini boundary condition, we thus obtain
\begin{equation}\label{eq:estiv}
\begin{split}
\int_{I_j}W(\tilde{v}(\tau)) d\tau \leq C c_j\int_{I_j}\|\tilde{v}\|^2_{L^2_\mu} d\tau.
\end{split}
\end{equation}
Using $W(\tilde{v}(\tau))\geq -\frac{3}{4}\|\tilde{v}(\tau)\|_{L^2_\mu}^2$ we further obtain
\begin{equation}
\label{eq:estiv2}
\begin{split}
-\frac{\lambda_j}{8}\int_{I_j}\int_{\{y_n=0\}}\left(\p_n\tilde{u} h_{e_j}+\tilde{u}\p_nh_{e_j}\right)d\mu d\tau\leq C\int_{I_j}\|\tilde{v}\|^2_{L^2_\mu} d\tau.
\end{split}
\end{equation}
Here in \eqref{eq:estiv} and \eqref{eq:estiv2}, the constant $C>0$ is an absolute constant.

With \eqref{eq:contra_a}, \eqref{eq:estiv} and \eqref{eq:estiv2} at hand, we argue as step (iii) and (iv) in Proposition \ref{prop:epi} and reach a contradiction.
\end{proof}

Similar as for the zero inhomogeneity case, if the Weiss energy is negative starting from some time $\tau_0$, one can show a stronger decay estimate:

\begin{prop}\label{prop:perturb_neg}
Let $\tilde{u}$ be a solution to \eqref{eq:inhomogeneous} with $\kappa=3/2$ and $\tilde{u}$ satisfies \eqref{eq:sob_conf}.
Assume that $W(\tilde{u}(\tau))\leq -4e^{-\tau/2}M_{\tilde{f}}^2$ for all $\tau>0$.
Then there exists a universal constant $c_0\in (0,1/4)$ such that
\begin{equation*}
W(\tilde{u}(\tau+ 1))\leq (1+c_0)W(\tilde{u}(\tau)) + 2 e^{-\tau/2}M_{\tilde{f}}^2 \text{ for any }\tau>0.
\end{equation*}
\end{prop}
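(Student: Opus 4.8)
The plan is to repeat the contradiction scheme of Proposition~\ref{prop:perturb}, but carried out in the regime where the Weiss energy is negative, so that the boundary term produced by the Signorini condition now carries the favourable sign; the standing hypothesis $W(\tilde u(\tau))\le -4e^{-\tau/2}M_{\tilde f}^2$ is precisely what guarantees that $|W(\tilde u(\tau))|$ dominates every contribution of the inhomogeneity, so that the limiting object ends up solving the \emph{homogeneous} stationary problem.

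First I would set up the contradiction. Suppose the statement fails: there are $c_j\in(0,1/4)$ with $c_j\to 0$, solutions $\tilde u_j$ of \eqref{eq:inhomogeneous} with $\kappa=3/2$ and inhomogeneities $\tilde f_j$ satisfying the standing hypothesis, and times $\tau_j>0$, such that
\begin{equation*}
W(\tilde u_j(\tau_j+1))> (1+c_j)W(\tilde u_j(\tau_j))+2e^{-\tau_j/2}M_{\tilde f_j}^2 .
\end{equation*}
Since $W(\tilde u_j(\tau_j))<0$, rearranging, inserting the almost monotonicity \eqref{eq:der_a}, and using $\int_{\tau_j}^{\tau_j+1}e^{-\tau/2}\,d\tau\le e^{-\tau_j/2}$, one obtains as in \eqref{eq:contra_a}
\begin{equation*}
\int\limits_{I_j}\|\p_\tau\tilde u_j\|_{L^2_\mu}^2\,d\tau + e^{-\tau_j/2}M_{\tilde f_j}^2 \le c_j\,|W(\tilde u_j(\tau_j))| \le C c_j \int\limits_{I_j}|W(\tilde u_j(\tau))|\,d\tau, \qquad I_j:=[\tau_j,\tau_j+1],
\end{equation*}
where in the last step one uses that, thanks to the buffer in the standing hypothesis, $\tau\mapsto W(\tilde u_j(\tau))$ is negative and decreasing up to an error absorbed by $|W(\tilde u_j(\tau))|$.

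Next I would run the compactness argument, following Proposition~\ref{prop:epi}. Decompose $\tilde u_j(\tau)=\lambda_j(\tau)h_{e_j(\tau)}+\tilde v_j(\tau)$ with $\lambda_jh_{e_j}$ the $L^2_\mu$-projection onto $\mathcal{E}_{3/2}$; since $W(\tilde u_j(\tau))<0$ we have $\tilde v_j\not\equiv 0$, and by \eqref{eq:weissv}, $W(\tilde v_j(\tau))\le W(\tilde u_j(\tau))<0$. Set $\hat v_j(y,\tau):=\tilde v_j(y,\tau_j+\tau)/\|\tilde v_j\|_{L^2(I_j;L^2_\mu)}$ on $\R^n_+\times[0,1]$. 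Mirroring steps (ii)--(iv) of Proposition~\ref{prop:epi} — using \eqref{eq:w2}, a boundary estimate of the type \eqref{eq:estiv2} to control $\int_{I_j}\|\tilde v_j(\tau)\|_{L^2_\mu}^2\,d\tau$ from the evolution of $\|\tilde v_j\|_{L^2_\mu}^2$, and the orthogonality relations \eqref{eq:orth1}--\eqref{eq:orth22} to handle the forcing $\p_\tau(\lambda_jh_{e_j})$ in the equation for $\tilde v_j$ — the bound above, once divided by $\|\tilde v_j\|_{L^2(I_j;L^2_\mu)}^2$, yields that $\hat v_j$ is bounded in $L^\infty([0,1];W^{1,2}_\mu)$, that $\p_\tau\hat v_j\to 0$ in $L^2([0,1];L^2_\mu)$, and that the renormalized inhomogeneity $\|\tilde v_j\|_{L^2(I_j;L^2_\mu)}^{-1}e^{-\tau_j/4}M_{\tilde f_j}\to 0$. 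By Aubin--Lions and the interior estimates, along a subsequence $\hat v_j\to\hat v_0$ strongly in $C([0,1];L^2_\mu)$ and locally in $C^1_xC^0_t$, with $\|\hat v_0\|_{L^2([0,1];L^2_\mu)}=1$; passing to the limit in the variational inequality (the inhomogeneity having disappeared), $\hat v_0$ is a \emph{nonzero stationary} solution of \eqref{eq:bulk_conf}--\eqref{eq:boundary_conf} with $\kappa=3/2$, hence $\hat v_0\in\mathcal{E}_{3/2}$ by Corollary~\ref{cor:Weiss} and Proposition~\ref{prop:wlimit}. On the other hand, the orthogonality relations \eqref{eq:orth1}--\eqref{eq:orth22} — which, as in step (iv) of the proof of Proposition~\ref{prop:epi} (renormalizing $\tilde v_j$ at a suitably chosen time and invoking the characterization of the second Dirichlet--Neumann eigenspace), pass to the limit — force $\hat v_0\equiv 0$. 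This contradicts $\|\hat v_0\|_{L^2([0,1];L^2_\mu)}=1$.

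The hard part will be the same as in Proposition~\ref{prop:perturb}: verifying that after the renormalization the terms $e^{-\tau_j/2}M_{\tilde f_j}^2$ are genuinely negligible — which is exactly where the factor $-4$ in the hypothesis $W(\tilde u(\tau))\le -4e^{-\tau/2}M_{\tilde f}^2$ enters, ensuring $|W(\tilde u_j(\tau))|$ swamps both the forcing and the defect in monotonicity — and that the renormalization leaves a nontrivial limit $\hat v_0$. Once these two points and the estimates of Proposition~\ref{prop:epi} are transcribed, the proof goes through with $(1-c_0)$ replaced by $(1+c_0)$.
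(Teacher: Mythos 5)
Your proposal follows the contradiction--compactness scheme that Propositions~\ref{prop:epi}, \ref{prop:epi_neg} and~\ref{prop:perturb} all share, which is exactly what the paper intends here (it leaves this proof to the reader with the remark ``similar as for the zero inhomogeneity case''), and the key algebraic step is correct: the contradiction hypothesis together with \eqref{eq:der_a} and $\int_{\tau_j}^{\tau_j+1}e^{-\tau/2}\,d\tau\le e^{-\tau_j/2}$ does give $\int_{I_j}\|\p_\tau\tilde u_j\|^2_{L^2_\mu}\,d\tau + e^{-\tau_j/2}M^2_{\tilde f_j} < c_j\,|W(\tilde u_j(\tau_j))|$, and the hypothesis $|W(\tilde u_j(\tau))|\ge 4e^{-\tau/2}M^2_{\tilde f_j}$ is what lets you compare $|W(\tilde u_j(\tau_j))|$ to $\int_{I_j}|W(\tilde u_j(\tau))|\,d\tau$ up to a universal factor, since it makes the defect of monotonicity a subordinate fraction of $|W|$. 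Two remarks on the details. First, your object $\hat v_j(\cdot,\tau)=\tilde v_j(\cdot,\tau_j+\tau)/\|\tilde v_j\|_{L^2(I_j;L^2_\mu)}$ is not the right thing to run Aubin--Lions on, because $\p_\tau\tilde v_j$ involves $\p_\tau(\lambda_j h_{e_j})$ which is not controlled; you need to freeze the projection at a single time $\hat\tau_j$ and consider $\hat w_j(\cdot,\tau)=(\tilde u_j(\cdot,\tau_j+\tau)-\lambda_j(\hat\tau_j)h_{e_j(\hat\tau_j)})/\|\tilde v_j(\hat\tau_j)\|_{L^2_\mu}$ as in step~(iv) of Proposition~\ref{prop:epi} --- you do gesture at this later, but the initial definition as written is off. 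Second, the statement that the orthogonality relations ``force $\hat v_0\equiv 0$'' hides the case analysis in the paper's step~(iv): \eqref{eq:orth1} is vacuous when $\lambda_j(\hat\tau_j)=0$, and the argument needs the observation that the $\hat u_j$-blow-up is a nonzero multiple of $h_{e_0}$ with positive coefficient, which rules out $\lambda_j(\hat\tau_j)=0$ along the subsequence. Finally, a simplification worth exploiting that you did not: when $W(\tilde u)<0$, the bound $W(\tilde v)\le W(\tilde u)<0$ already gives $\|\nabla\tilde v\|^2_{L^2_\mu}<3\|\tilde v\|^2_{L^2_\mu}$ for free, so the bulk of step~(ii) of Proposition~\ref{prop:epi} (the upper bound \eqref{eq:wbound} for $W(\tilde v)$ and the boundary-term estimate \eqref{eq:estiv2}) is not needed here --- this is what the paper means by the negative-Weiss argument being ``simpler''. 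These are expository gaps rather than wrong turns; once filled in, your proof is sound and coincides with the intended one.
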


Now we outline how to obtain the decay rate of the Weiss energy and the convergence rate of $\tilde{u}(\tau)$ from Proposition \ref{prop:perturb} and Proposition \ref{prop:perturb_neg}. In the perturbation case, we consider the modified energy
$$\widetilde{W}(\tilde{u}(\tau)):=W(\tilde{u}(\tau))+ 4 e^{-\frac{\tau}{2}} M_{\tilde{f}}^2.$$
By \eqref{eq:der_a}, $\tau\mapsto \widetilde{W}(\tilde{u}(\tau))$ is monotone decreasing. Moreover, from Proposition \ref{prop:perturb} and Proposition \ref{prop:perturb_neg}, there exists a $c_0\in (0,1/8)$ ($c_0$ might be different from that in Proposition \ref{prop:perturb} but remains universal) such that
\begin{equation*}
\widetilde{W}(\tilde{u}(\tau+1))\leq (1-c_0) \widetilde{W}(\tilde{u}(\tau)),
\end{equation*}
and if $\widetilde{W}(\tilde{u}(\tau))<0$ for all $\tau>0$,
\begin{equation*}
\widetilde{W}(\tilde{u}(\tau+1))\leq (1+c_0) \widetilde{W}(\tilde{u}(\tau)).
\end{equation*}
This implies that there is some constant $\gamma_0\in (0,1)$ depending on $n$ such that
\begin{equation*}
\widetilde{W}(\tilde{u}(\tau))\leq e^{-\gamma_0\tau}\widetilde{W}(\tilde{u}(0)),
\end{equation*}
and
$$\widetilde{W}(\tilde{u}(\tau))\leq e^{\gamma_0\tau} \widetilde{W}(\tilde{u}(0))\text{ if }\widetilde{W}(\tilde{u}(0))<0.$$
From this and arguing similarly as in Corollary \ref{cor:decay_rate_32} we conclude that if $\widetilde{W}(\tilde{u}(\tau))\geq 0$ for all $\tau>0$, then for all $0<\tau_1<\tau_2<\infty$,
\begin{equation*}
\|\tilde{u}(\tau_2)-\tilde{u}(\tau_1)\|_{L^2_\mu}^2\leq  C_n\left(W(\tilde{u}(0))+ 4M_{\tilde{f}}^2\right) e^{-\gamma_0\tau_1};
\end{equation*}
and if $\widetilde{W}(\tilde{u}(\tau_0))<0$ for some $\tau_0>0$, then
\begin{equation*}
\|\tilde{u}(\tau)\|_{L^2_\mu}^2 \geq -C_n \widetilde{W}(\tilde{u}(\tau_0))e^{\gamma_0(\tau-\tau_0)}, \quad \tau\in [\tau_0,\infty).
\end{equation*}

\vspace{5pt}

For the case $\kappa=2m$, $m\in \N_+$ in \eqref{eq:inhomogeneous} we assume further that the inhomogeneity in the conformal coordinates satisfies: for some $M>0$ and $\epsilon_0\in (0,1)$,
\begin{equation}\label{eq:inhomo_2m}
\|\tilde{f}(\tau)\|_{L^2_\mu} \leq M e^{-\tau(m-1+\epsilon_0)}, \text{ for all }\tau>0.
\end{equation}
Note that \eqref{eq:inhomo_2m} is satisfied if in the original coordinates $f(x,t)$ has the vanishing property at $(0,0)$ that $|f(x,t)|\leq M (|x|+\sqrt{-t})^{2(m-1+\epsilon_0)}$.  Under such assumption, the inhomogeneity only contributes as a higher order term in our estimates. In particular, similar as \eqref{eq:der_a} in the $\kappa=3/2$ case, we have the almost monotone decreasing property for the Weiss energy: for $0<\tau_a<\tau_b<\infty$,
\begin{equation*}
W(\tilde{u}(\tau_b))-W(\tilde{u}(\tau_a))\leq -\int\limits_{\R^n_+\times [\tau_a,\tau_b]} |\p_\tau\tilde{u}|^2 d\mu d\tau + M^2 \int\limits_{\tau_a}^{\tau_b} e^{-2\tau \epsilon_0} d\tau.
\end{equation*}
Thus with slight modification as in the case $\kappa=3/2$, one can generalize Proposition \ref{prop:negative_Weiss} and Proposition \ref{prop:homo2m2} to the nonzero inhomogeneity case, and we do not repeat the proof here.

\appendix
\section*{Appendix}
\label{sec:appendix}
In the appendix we prove the following characterization result for the second Dirichlet-Neumann eigenspace for the Ornstein-Uhlenbeck operator $L:=-\frac{1}{2}\Delta  + y\cdot \nabla$. 
More precisely, we consider the eigenvalue problem 
\begin{align*}
Lu&=\lambda u \text{ in } \R^{n}_+\\
u &=0\text{ on } \Lambda:=\{(y',0)\in \R^{n-1}\times \{0\}: y_{n-1}\leq 0\},\\
\p_nu&=0 \text{ on } \R^{n-1}\times\{0\}\setminus \Lambda.
\end{align*}
We further extend $u$ by even reflection in $y_n$ and obtain the eigenvalue problem in the slit domain:
\begin{align*}
Lu &=\lambda u \text{ in } \R^{n}\setminus \Lambda,\\ 
u&=0 \text{ on } \Lambda.
\end{align*}
The operator $L$ is a self-adjoint operator on $\mathcal{E}\subset L^2_{\mu}(\R^n\setminus\Lambda)$ with symmetry, where $\mathcal{E}:=\{w\in W^{1,2}_{\mu}(\R^n\setminus \Lambda): w=0 \text{ on }\Lambda, \ w \text{ is even in } y_n\}$, thus it has discrete spectrum: $0<\lambda_1\leq\lambda_2\leq \cdots$. 
It is immediate to verify that 
$$u_0(y)=\Ree(y_{n-1}+i|y_n|)^{1/2}\in \mathcal{E}$$
satisfies $Lu_0=\frac{1}{2}u_0$ and $u_0>0$ in $\R^{n}\setminus\Lambda$. Thus $u_0$ is the ground state for $L$ in $\R^{n}\setminus\Lambda$, and $\lambda_1=\frac{1}{2}$.
For the second eigenspace, we have the following characterization result:

\begin{prop}
The second eigenvalue $\lambda_2=\frac{3}{2}$. Furthermore, the corresponding eigenspace is spanned by 
\begin{align*}
\Ree(y_{n-1}+ i|y_n|)^{3/2},\quad y_i\Ree(y_{n-1}+i|y_n|)^{1/2},\quad i=1,\cdots, n-2.
\end{align*}
\end{prop}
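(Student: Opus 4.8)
The plan is to reduce the $n$-dimensional eigenvalue problem to a two-dimensional one in the slit plane. Write $y=(y'',z)$ with $y''=(y_1,\dots,y_{n-2})$ and $z=(y_{n-1},y_n)$, and observe that $\Lambda=\R^{n-2}\times\Lambda_0$, where $\Lambda_0:=\{(z_1,0):z_1\le 0\}\subset\R^2$. Correspondingly, let $L_0:=-\frac12\Delta_z+z\cdot\nabla_z$ act on $\mathcal E_0:=\{w\in W^{1,2}_{\mu_0}(\R^2\setminus\Lambda_0):w=0\text{ on }\Lambda_0,\ w\text{ even in }z_2\}$ with $d\mu_0:=e^{-|z|^2}dz$; exactly as for $L$, this is a self-adjoint operator with discrete spectrum $0<\lambda_1(L_0)\le\lambda_2(L_0)\le\cdots$.

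\emph{Step 1: reduction to the plane.} For $i\le n-2$ the set $\Lambda$ is invariant under translations in $e_i$, so, using that eigenfunctions behave like $\dist^{1/2}$ near the (codimension two) edge of $\Lambda$, the tangential derivative $\p_i w$ of an eigenfunction $w\in\mathcal E$ again lies in $\mathcal E$. Since $\p_i(Lw)=L(\p_i w)+\p_i w$, differentiating $Lw=\lambda w$ yields $L(\p_i w)=(\lambda-1)\p_i w$. Because $\lambda_1=\frac12$ with positive — hence simple — ground state $u_0$, we obtain: (a) if $\lambda<\frac32$, then $\lambda-1<\frac12$ lies below the spectrum, forcing $\p_i w=0$ for all $i\le n-2$; thus $w=w(z)$ and, restricting to $\R^2$, $w$ is an $L_0$-eigenfunction of eigenvalue $\lambda$; (b) if $\lambda=\frac32$, then $\p_i w$ lies in the ground eigenspace, i.e. $\p_i w=c_i u_0$ for some $c_i\in\R$; since $\p_i u_0=0$ and a direct computation gives $L(y_i u_0)=y_i\,Lu_0+y_i u_0-\p_i u_0=\frac32\,y_i u_0$, the function $\tilde w:=w-\sum_{i=1}^{n-2}c_i\,y_i u_0$ is a $\frac32$-eigenfunction of $L$ that is independent of $y''$, hence a $\frac32$-eigenfunction of $L_0$.

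\emph{Step 2: the planar spectrum.} Pass to polar coordinates $z=\rho e^{i\theta}$, $\theta\in(-\pi,\pi)$, so that $\Lambda_0=\{\theta=\pm\pi\}$, and separate variables. The angular modes compatible with vanishing at $\theta=\pm\pi$ and evenness in $\theta$ are $\cos(s_k\theta)$ with $s_k:=k-\frac12$, $k\ge 1$. For each $k$, the substitutions $R(\rho)=\rho^{s_k}g(\rho^2)$ and $t=\rho^2$ turn the radial eigenvalue equation into a Laguerre equation with parameter $s_k$, whose spectrum is the simple discrete set $\{s_k+2\ell:\ell\ge 0\}$, with constant $g$ at the bottom. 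Hence $\sigma(L_0)=\{\frac12,\frac32,\frac52,\dots\}$: the value $\frac12$ arises only from $(k,\ell)=(1,0)$, with eigenfunction $\rho^{1/2}\cos(\theta/2)=\Ree(z_1+i|z_2|)^{1/2}=u_0$, and $\frac32$ only from $(k,\ell)=(2,0)$, with eigenfunction $\rho^{3/2}\cos(3\theta/2)=\Ree(z_1+i|z_2|)^{3/2}$; in particular $\lambda_1(L_0)=\frac12$ and $\lambda_2(L_0)=\frac32$ are both simple.

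\emph{Step 3: conclusion, and where the work lies.} By Step 1(a) and Step 2, $L$ has no eigenvalue in $(\frac12,\frac32)$, while $\Ree(y_{n-1}+i|y_n|)^{3/2}\in\mathcal E$ solves the equation with $\lambda=\frac32$; therefore $\lambda_2=\frac32$. For the eigenspace, combining Step 1(b) with Step 2 shows that every $\frac32$-eigenfunction of $L$ has the form $\sum_{i=1}^{n-2}c_i\,y_i\Ree(y_{n-1}+i|y_n|)^{1/2}+d\,\Ree(y_{n-1}+i|y_n|)^{3/2}$, and, conversely, each of these $n-1$ linearly independent functions is a $\frac32$-eigenfunction (the computation of Step 1(b) for the first $n-2$, harmonicity and $\frac32$-homogeneity for the last); hence the $\frac32$-eigenspace is precisely the claimed span. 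The technical core is Step 2: fixing the self-adjoint realization of $L_0$ on the slit plane, justifying the separation of variables (the direct-sum decomposition over angular modes and the completeness of the separated eigenfunctions), and matching the radial spectra with the Laguerre values $\{s_k+2\ell\}$ — all classical, though requiring some care with the weighted Sobolev spaces and the $\dist^{1/2}$ behaviour along the edge of $\Lambda$. Once this is in place, Steps 1 and 3 are elementary.
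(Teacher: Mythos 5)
Your proof is correct, and shares the paper's overall architecture — tangentially differentiate in the $y''$-directions to reduce from $\R^n$ to the slit plane, then classify the planar eigenspace — but the planar step is done by a genuinely different argument. The paper opens the slit with the conformal map $z\mapsto z^2$, reads off a power-series expansion to prove $C^{1,1/2}$ regularity of the eigenfunction up to the slit, and then differentiates once more in $y_{n-1}$, landing in the (simple) ground eigenspace and integrating back. You instead pass to polar coordinates, separate variables, and match the radial equation against the Laguerre equation with parameter $s_k=k-\tfrac12$, so that $\sigma(L_0)=\{s_k+2\ell:k\ge1,\ \ell\ge0\}=\{\tfrac12,\tfrac32,\tfrac52,\dots\}$ with $\tfrac12$ and $\tfrac32$ both simple (indeed, with $s_1=\tfrac12$ the next radial level is already $\tfrac52$, so $\tfrac32$ only arises from $(k,\ell)=(2,0)$). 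Your route buys the whole planar spectrum at once — from which the paper's closing remark that there is no eigenvalue in $(\tfrac12,\tfrac52)$ other than $\tfrac32$ is immediate — at the cost of justifying the self-adjoint realization on the slit plane and the completeness of the separated basis, which you rightly flag as the technical core. One suggestion on the reduction step: for $n\ge 3$ the paper obtains $\p_i u\in\mathcal E$, $i\le n-2$, via difference quotients and uniform energy estimates, which is cleaner than invoking the $\dist^{1/2}$ edge asymptotics a priori; you could adopt that and keep the rest of Step 1 (including the clean computation $L(y_iu_0)=\tfrac32 y_iu_0$ for $i\le n-2$) unchanged.
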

\begin{proof}
Direct computation shows that $\Ree(y_1+i|y_2|)^{3/2}\in \mathcal{E}$ is an eigenfunction with $\lambda=3/2$. Therefore, by the characterization of the second eigenvalue:
$$\lambda_2=\inf_{\substack{w\in \mathcal{E},\\
 w \perp u_0,\ w\neq 0}}\frac{\|\nabla w\|_{L^2_\mu}^2}{\|w\|_{L^2_\mu}^2},$$
we have that $\lambda_2\leq 3/2$. The proof for $\lambda_2=3/2$ and characterization of the eigenspace is mainly based on the following property, which we are going to prove: if $u\in \mathcal{E}$ is an eigenfunction with eigenvalue $\lambda\in (1/2,5/2)$, then its tangential derivative $\p_\ell u$, $\ell=1,\cdots, n-1$, is a ground state. We also note that eigenfunctions which are $L^2$ integrable against the Gaussian measure $d\mu=e^{-|y|^2} dy$ have at most power growth at infinity (cf. for instance, \cite{CM18}).\\

\emph{(i) Case $n=2$}. 
 The proof is divided into two steps. In the first step, we will show that if $u$ is an eigenfunction with $\lambda>1/2$, then it is  in $C^{1,1/2}_{loc}(\R^2_+)$ up to the slit. This implies that its derivative $\p_1 u$ is in the space $\mathcal{E}$. In the second step we apply the characterization of the ground state to show that, if $u$ is an eigenfunction associated to $\lambda_2$, then up to a constant $u(y)=\Ree(y_1+i|y_2|)^{3/2}$.
 
\emph{Step 1: Regularity up to the slit.}
We apply the conformal transformation $z\mapsto z^2$ to open up the slit:
\begin{align*}
y\mapsto z: \quad (y_1,y_2)=(z_1^2-z_2^2,\ 2z_1z_2).
\end{align*} 
The transformation maps $\R^2\setminus\Lambda$ to $\{(z_1,z_2)\in \R^2:z_1>0\}$ and $\Lambda$ to $\{(z_1,z_2)\in \R^2:z_1=0\}$. In the new variables the original eigenvalue problem reads
\begin{align*}
-\Delta_z u = 4|z|^2 (2\lambda u-z\cdot \nabla_z u)\text{ in } \{z_1>0\},\\
u=0 \text{ on } \{z_1=0\},\quad u \text{ is even in } z_2.
\end{align*}
We can further extend $u$ to a solution in the full space $\R^2$ by odd reflection about $z_1=0$. Since the operator is hypoelliptic, solution $u$ is real-analytic, thus has the power series expansion: $u(z)=\sum_k H_k(z)$, where $H_k(z)$ is the $k$th-order homogeneous polynomial. Using the even/odd symmetry and the fact that $u$ is orthogonal to the ground state $z_1$, we have $H_0=H_1=H_2=0$. Observing that the right hand side of the equation is of order $o(|z|^2)$, then necessarily $\Delta H_3(z)=0$, which together with the symmetry gives $H_3(z)= b z_1(z_2^2-\frac{1}{3}z_1^2)$ for some constant $b$. Therefore, 
\begin{align*}
u(z)=bz_1(z_2^2-\frac{1}{3}z_1^2) + O(|z|^4).
\end{align*}
Transforming back to the original variables, we obtain
\begin{align*}
u(y)= \tilde{b} \Ree(y_1+i|y_2|)^{3/2} + O(|y|^2).
\end{align*}
In particular, this implies that $u\in C^{1,1/2}_{loc}$ up to the slit. 

\emph{Step 2: Characterization.} Now we observe that $v=\p_1u\in W^{1,2}_{\mu}(\R^2\setminus \Lambda)$ satisfies
\begin{align*}
Lv=(\lambda-1)v \text{ in } \R^2\setminus \Lambda, \quad v=0 \text{ on } \Lambda, \quad v \text{ is even in } y_2.
\end{align*}
If $\lambda\leq 3/2$, then necessarily $\lambda=3/2$ and $v(y)=\Ree(y_1+i|y_2|)^{1/2}$ (up to a constant) by the classification of the ground state. This shows the second eigenvalue $\lambda_2=3/2$. Integrating in $y_1$ leads to $u(y)=c\Ree(y_1+i|y_2|)^{3/2} + g(y_2)$ for some function $g$ depending only on $y_2$. From the equation of $u$ and the Neumann boundary condition we infer that $g$ solves the ODE
\begin{align*}
-\frac{1}{2}g'' + y_2 g' = \frac{3}{2}g \text{ in } \R, \quad g(y_2)=g(-y_2).
\end{align*}
This implies $g=0$ (one can see this from, for instance, the classical result on spectrum of the $1d$ Ornstein-Uhlenbeck operator).
Thus the eigenfunction for $\lambda_2$ is $u(y)=c\Ree(y_1+i|y_2|)^{3/2}$. \\

\emph{(ii) Case $n\geq 3$.} Let $y''=(y_1,\cdots, y_{n-2})$ and let $u$ be an eigenfunction of $L$ with eigenvalue $\lambda_2$. We observe that the boundary condition is translation invariant in $y''$. Thus, by considering the equation for the difference quotient $\Delta_h^1 u = (u(\cdot + h e'')-u(\cdot))/h$, $h\in \R$, $|h|>0$, $e''\perp \{e_{n-1}, e_n\}$, and applying the energy estimate (which is uniform in $h$), we obtain that $w=\p_iu \in \mathcal{E}$, $i=1,\cdots, n-2$. Furthermore, $w$ solves
\begin{align*}
Lw =(\lambda_2-1/2) w \text{ in }\R^n\setminus \Lambda,\quad w=0 \text{ on } \Lambda.
\end{align*}
Similarly as in (i), using $\lambda_2\leq 3/2$ and the characterization of the ground state we can conclude that $w=c\Ree(y_{n-1}+i|y_n|)^{1/2}$ and $\lambda_2=3/2$. This implies that $u(y)=\sum_{i=1}^{n-2}c_i y_i \Ree(y_{n-1}+|y_n|)^{1/2}+ h(y_{n-1}, y_n)$ for some function $h$ only depending on $y_{n-1}$ and $y_n$. From the equation and the boundary conditions for $u$, we infer that $h$ is the second Dirichlet-Neumann eigenfunction for $L$. Thus using the $2d$ result from (i) we conclude that $h=c\Ree(y_{n-1}+i|y_n|)^{3/2}$. \\

It is not hard to see that $3/2$ is the only eigenvalue in $(1/2,5/2)$. Because otherwise, from the above arguments $\lambda-1\in (1/2,3/2)$ would also be an eigenvalue, which contradicts with the spectral gap between $1/2$ and $3/2$.  The proof is now complete.
\end{proof} 

\section*{Acknowledgements}
The author would like to thank Angkana R\"uland for many helpful discussions, comments and encouragement during the preparation of the paper. The research was partially supported by CMUC -- UID/MAT/00324/2013, funded by the Portuguese Government through FCT/MCTES and co-funded by the European Regional Development Fund through the Partnership Agreement PT2020, by the FCT project TUBITAK/0005/2014, and by the DFG individual research grant SH 1403/1-1. 



\medskip
\medskip


\begin{thebibliography}{99}


\bibitem{AU}
A.~Arkhipova and N.~Uraltseva.
\newblock Sharp estimates for solutions of a parabolic {S}ignorini problem.
\newblock {\em Math. Nachr.}, 177:11--29, 1996.

\bibitem{ACM18}
Ioannis Athanasopoulos, Luis Caffarelli, and Emmanouil Milakis.
\newblock On the regularity of the non-dynamic parabolic fractional obstacle
  problem.
\newblock {\em J. Differential Equations}, 265(6):2614--2647, 2018.

\bibitem{AT18}
Alessandro Audrito and Susanna Terracini.
\newblock On the nodal set of solutions to a class of nonlocal parabolic
  reaction-diffusion equations.
\newblock {\em arXiv:1807.10135}, 2018.

\bibitem{BDGPII}
A.~Banerjee, D.~Danielli, N.~Garofalo, and A.~Petrosyan.
\newblock The regular free boundary in the thin obstacle problem for degenerate
  parabolic equations.
\newblock {\em arXiv:1906.06885}, 2019.

\bibitem{BDGP}
A.~Banerjee, D.~Danielli, N.~Garofalo, and A.~Petrosyan.
\newblock The structure of the singular set in the thin obstacle problem for
  degenerate parabolic equations.
\newblock {\em arXiv:1902.07457}, 2019.

\bibitem{BG18}
Agnid Banerjee and Nicola Garofalo.
\newblock Monotonicity of generalized frequencies and the strong unique
  continuation property for fractional parabolic equations.
\newblock {\em Adv. Math.}, 336:149--241, 2018.

\bibitem{ASZ17}
Agnid Banerjee, Mariana Smit Vega~Garcia, and Andrew~K. Zeller.
\newblock Higher regularity of the free boundary in the parabolic {S}ignorini
  problem.
\newblock {\em Calc. Var. Partial Differential Equations}, 56(1):Art. 7, 26,
  2017.

\bibitem{CM18}
Tobias~Holck Colding and William~P. Minicozzi, II.
\newblock Sharp frequency bounds for eigenfunctions of the
  {O}rnstein-{U}hlenbeck operator.
\newblock {\em Calc. Var. Partial Differential Equations}, 57(5):Art. 138, 16,
  2018.

\bibitem{CSV17}
Maria Colombo, Luca Spolaor, and Bozhidar Velichkov.
\newblock Direct epiperimetric inequalities for the thin obstacle problem and
  applications.
\newblock {\em arXiv:1709.03120}, 2017.

\bibitem{CSV18}
Maria Colombo, Luca Spolaor, and Bozhidar Velichkov.
\newblock A logarithmic epiperimetric inequality for the obstacle problem.
\newblock {\em Geom. Funct. Anal.}, 28(4):1029--1061, 2018.

\bibitem{CSV18II}
Maria Colombo, Luca Spolaor, and Bozhidar Velichkov.
\newblock On the asymptotic behavior of the solutions to parabolic variational
  inequalities.
\newblock {\em arXiv:1809.06075}, 2018.

\bibitem{DGPT}
Donatella Danielli, Nicola Garofalo, Arshak Petrosyan, and Tung To.
\newblock Optimal regularity and the free boundary in the parabolic {S}ignorini
  problem.
\newblock {\em Mem. Amer. Math. Soc.}, 249(1181):v + 103, 2017.

\bibitem{DL76}
G.~Duvaut and J.-L. Lions.
\newblock {\em Inequalities in mechanics and physics}.
\newblock Springer-Verlag, Berlin-New York, 1976.
\newblock Translated from the French by C. W. John, Grundlehren der
  Mathematischen Wissenschaften, 219.

\bibitem{FJ18}
Xavier Fernández-Real and Yash Jhaveri.
\newblock On the singular set in the thin obstacle problem: higher order
  blow-ups and the very thin obstacle problem.
\newblock {\em arXiv:1812.01515}, 2019.

\bibitem{FS18}
Alessio Figalli and Joaqium Serra.
\newblock On the fine structure of the free boundary for the classical obstacle
  problem.
\newblock {\em Invent. Math.},  215(1):311-366, 2019.

\bibitem{FS16}
Matteo Focardi and Emanuele Spadaro.
\newblock An epiperimetric inequality for the thin obstacle problem.
\newblock {\em Adv. Differential Equations}, 21(1-2):153--200, 2016.

\bibitem{FSpadaro18}
Matteo Focardi and Emanuele Spadaro.
\newblock On the measure and the structure of the free boundary of the lower
  dimensional obstacle problem.
\newblock {\em Arch. Ration. Mech. Anal.}, 230(1):125--184, 2018.

\bibitem{FS19}
Matteo Focardi and Emanuele Spadaro.
\newblock The local structure of the free boundary in the fractional obstacle
  problem.
\newblock {\em arXiv:1903.05909}, 2019.

\bibitem{GPSVG15}
Nicola Garofalo, Arshak Petrosyan, and Mariana Smit Vega~Garcia.
\newblock An epiperimetric inequality approach to the regularity of the free
  boundary in the {S}ignorini problem with variable coefficients.
\newblock {\em J. Math. Pures Appl. (9)}, 105(6):745--787, 2016.

\bibitem{PS14}
Arshak Petrosyan and Wenhui Shi.
\newblock Parabolic boundary {H}arnack principles in domains with thin
  {L}ipschitz complement.
\newblock {\em Anal. PDE}, 7(6):1421--1463, 2014.

\bibitem{PZ19}
Arshak Petrosyan and Andrew Zeller.
\newblock Boundedness and continuity of the time derivative in the parabolic
  {S}ignorini problem.
\newblock {\em Math. Res. Lett.}, 26(1):281--292, 2019.

\bibitem{RS17}
Angkana R\"{u}land and Wenhui Shi.
\newblock Optimal regularity for the thin obstacle problem with
  {$C^{0,\alpha}$} coefficients.
\newblock {\em Calc. Var. Partial Differential Equations}, 56(5):Art. 129, 41,
  2017.

\bibitem{STT18}
Y.~Sire and G.~Tortone S.~Terracini.
\newblock On the nodal set of solutions to degenerate or singular elliptic
  equations with an application to s-harmonic functions.
\newblock {\em arXiv:1808.01851}, 2019.

\bibitem{Weiss99}
Georg~S. Weiss.
\newblock A homogeneity improvement approach to the obstacle problem.
\newblock {\em Invent. Math.}, 138(1):23--50, 1999.





\end{thebibliography}
\end{document}